\documentclass[letter,fleqn,12pt]{article}
\usepackage[margin=1.87cm]{geometry}
\usepackage{bbold, epsfig}
\usepackage{latexsym, amsmath, euscript}
\usepackage{indentfirst}
\usepackage{enumerate}
\usepackage{tabularx}
\usepackage[utf8]{inputenc}
\usepackage{caption}
\usepackage{graphicx}
\usepackage{txfonts}
\usepackage{lipsum}
\usepackage{amssymb}

\usepackage{fancyhdr, lastpage}
\usepackage{zref-totpages}
\usepackage{hyperref}
\usepackage{xcolor}
\usepackage{cite}
\usepackage{mathtools}

\usepackage{authblk}
\newtheorem{theorem}{Theorem}[section]
\newtheorem{lemma}[theorem]{Lemma}
\newtheorem{proposition}[theorem]{Proposition}

\newtheorem{remark}[theorem]{Remark}

\newcommand{\R}{\mathbb R}
\newcommand{\N}{\mathbb N}
\newcommand{\ol}[1]{\overline{#1}}
\newcommand{\duality}[1]{\langle #1\rangle}

\newcommand{\bu}{{\textit{\textbf u}}}
\newcommand{\bv}{{\textit{\textbf v}}}
\newcommand{\bw}{{\textit{\textbf w}}}
\newcommand{\bg}{{\textit{\textbf g}}}
\newcommand{\ibf}{{\textit{\textbf f}}}

\newcommand{\bx}{{\textit{\textbf x}}}

\newcommand{\bn}{{\textit{\textbf n}}}
\newcommand{\bt}{{\textit{\textbf t}}}
\newcommand{\bH}{{\textbf{\textit H}}}
\newcommand{\bL}{{\textbf{\textit L}}}

\newcommand{\bD}{{\textbf{\textit D}}}

\newcommand{\cD}{{{\mathcal D}}}
\newcommand{\cR}{{{\mathcal R}}}

\newcommand{\bcD}{{\boldsymbol{{\mathcal D}}}}

\newcommand{\pOmega}{\partial\Omega}
\newcommand{\bomega}{{\boldsymbol{{\omega}}}}
\newcommand{{{\bvarpi}}}{{\boldsymbol{{\varpi}}}}
\newcommand{\bphi}{{\boldsymbol{{\varphi}}}}
\newcommand{\bthetaup}{{\boldsymbol{{\varthetaup}}}}
\newcommand{\ds}[1]{{\displaystyle{#1}}}

{\newcounter{lequation}%
\newcounter{requation}%
\setcounter{lequation}{\vlue{equation}}%
\setcounter{requation}{\value{equation}}%
\addtocounter{requation}{3}}%
{\setcounter{equation}{\value{requation}}}

\fontfamily{palatino}

\title{The structure of the solution to the generalized Stokes equations and
a new method for solving them}

\author{Arian Novruzi}

\affil{Department of Mathematics and Statistics, University of Ottawa, ON, Canada
\\
E-mail: \texttt{novruzi@uottawa.ca}}

\begin{document}
\fontsize{13}{16}\selectfont %

\maketitle

\begin{abstract}
In this paper we show that the solution to the generalized Stokes equations with Dirichlet boundary
conditions in dimension $d=2,3$ has a particular structure. 
Namely, the velocity solution $\bu$ is the superposition of two  velocity fields  
$\bomega$ and $\bthetaup$, $\bu=\bomega+\bthetaup$. Both $\bomega$ and $\bthetaup$ are divergence-free and solve 
$\alpha\bomega-\Delta\bomega=\bvarpi$ and
$\alpha\bthetaup-\Delta\bthetaup=-\nabla q$,
where 
$\bvarpi$ is the rotational component of the Helmholtz decomposition of the external force,  
$\ibf=\bvarpi+\nabla\pi$,
and $q$ is a harmonic scalar function. 
Similarly, the fluid pressure $p$ is the superposition of the potential $\pi$ associated to the gradient component of the Helmholtz decomposition of the external force, 
and of a harmonic function $q$ called ``solid pressure", so $p=\pi+q$.
By equipping $\bomega$ and $\bthetaup$ with appropriate boundary conditions, 
it turns out that regardless the pressure $q$,  $\bomega+\bthetaup$ is divergence-free and satisfies the generalized Stokes equations, except the normal-component of the Dirichlet boundary condition.
The role of the pressure $q$, which solves a linear boundary equation with a self-adjoint coercive
operator, is to constrain the total velocity 
$\bomega+ \bthetaup$ to satisfy even the normal component of the Dirichlet boundary condition.

The method is attractive from the numerical viewpoint. It decouples the velocity from the pressure and does not solve the incompressibility constraint directly.
It requires only the solution of Helmholtz-like vector equations $\alpha I - \mu\Delta$ coupled only on the boundary,
and the solution of a boundary equation with a self-adjoint positive-definite operator, which can be solved by direct or iterative  methods.
In the last section we present two methods for solving numerically the generalized Stokes equations, 
and some results to demonstrate the efficiency of our approach.
\end{abstract}

\section{Introduction}
Let  $d=2$ or $3$, $\Omega\subset\R^d$ be a simply connected open bounded set of class $C^{1,1}$,
$T>0$, and consider the following Navier-Stokes equations
\begin{subequations}\label{e:NS}
\begin{align}
\rho\frac{D\bu}{Dt} - \mu \Delta \bu + \nabla p&={\ibf}\;\; in\;\;\Omega,
\\
\nabla\cdot\bu&=0\;\; in\;\; \Omega,
\\
\bu&=\bg\;\; on\;\; \partial\Omega,
\\
\bu(\cdot,0)&=\bu_0\;\; in\;\;\Omega,
\end{align}
\end{subequations}
where
\begin{subequations}
\begin{align}
\bu&=\bu(\bx,t)\in\R^d,
\;\;
p=p(\bx,t)\in\R,\quad \mbox{ are the unknowns,}
\\
{\ibf}&:\Omega\times[0,T]\mapsto\R^d\mbox{ and}
\\
{\bg}&:\pOmega\times[0,T]\mapsto\R^d\mbox{ are given time dependent vector functions, and}
\\
\frac{D\bu}{Dt}
&=\partial_t \bu + (\nabla\cdot \bu)\bu,
\quad
(\bx,t)\in \Omega\times(0,T),
\quad
\rho>0,\; \mu>0.
\end{align}
\end{subequations}
The common strategy for solving numerically \eqref{e:NS} is by first discretizing it in time and then solving the resulting PDE in $\Omega$.
Namely, given $N\in\N$, let $\Delta t=T/N$ and $t_n=n\Delta t$, $n=1,2,\ldots,N$.
Then we look for an approximation of $\bu$ and $p$ at times $t_n$. 
After discretizing the nonlinear term, for example with the first order backward approximation
or with the backward approximation along characteristics, 
see \cite{pironneau-1982-ns+char,russell-1982-ns+char},
one ends up with the following model problem 
\begin{subequations}\label{e:bu-all}
\begin{align}
\alpha\bu
- \mu \Delta \bu
+ \nabla p
&={\ibf}
\;\; in\;\;\Omega,		\label{e:bu-Om}
\\
\nabla\cdot\bu&=0\;\; in\;\; \Omega,	\label{e:div(bu)-Om}
\\
\bu&=\bg\;\; on\;\; \partial\Omega,	\label{e:bu-pOm}
\end{align}
\end{subequations}
where $\bu=\bu(\bx)$, $p=p(\bx)$ are the unknown functions, $\bg=\bg(\bx)$ is a given 
function and $\mu>0$, $\alpha\geq0$. We will refer to \eqref{e:bu-all} as ``generalized Stokes equations/problem".
This is the problem that we will study in this paper. 

The main difficulty for solving \eqref{e:bu-all} comes from the coupling of \eqref{e:bu-Om}
with the divergence-free constraint \eqref{e:div(bu)-Om}.
Without discussing the immense literature on Stokes equations,  
we single out some of the most popular methods for solving \eqref{e:bu-all}:
the projection method, \cite{chorin-1968,temam-1969},
Uzawa method \cite{uzawa-1958,glowinski-1984}, the augmented Lagrangian method,
see \cite{fortin-1983}, and block-preconditioning and multigrid methods,
see \cite{elman+al-2005}.
The projection method addresses the constraint \eqref{e:div(bu)-Om} by first computing an intermediate velocity 
counting for the external forces, followed by a projection step where the divergence-free 
velocity is computed along with the fluid pressure.
In this method the equations of velocity components are decoupled and a boundary layer for
the pressure appears because the method leads to a zero Neumann boundary condition for the pressure,
see \cite{guermond+al-2006}.
Uzawa method solves the
constraint \eqref{e:div(bu)-Om} iteratively, and between iterations solves the velocity components
separately, with an overall slow convergence rate. 
The augmented Lagrangian method  solves the
constraint \eqref{e:div(bu)-Om} iteratively and the velocity components are coupled.
It converges faster than Uzawa method but with the cost of solving fully coupled equations for the velocity.

The method that we present is different from the methods that we mentioned above 
in the way it solves the divergence-free constraint \eqref{e:div(bu)-Om}.
We construct the solution $\bu$ of \eqref{e:bu-all} as the sum of
two divergence-free velocities,  $\bu = \bomega + \bthetaup$.
The velocity $\bomega$ solves $\alpha\bomega - \mu\Delta\bomega=\bvarpi$, and
$\bthetaup=\bthetaup(q)$ solves $\alpha\bthetaup - \mu\Delta\bthetaup =-\nabla q$,
where $\bvarpi$ is the rotational component of the Helmholtz decomposition of 
the external force,\,  $\ibf=\bvarpi+\nabla\pi$, and 
$q$ is a harmonic function defined uniquely by its boundary values.
With this construction, it turns out that regardless $q$,
$\nabla\cdot\bomega$ and $\nabla\cdot\bthetaup$ satisfy the equation 
$\alpha z - \mu\Delta z=0$. 
We impose appropriate boundary conditions for $\bomega$ and $\bthetaup$ 
which imply that both $\bomega$ and $\bthetaup$ are divergence-free regardless $q$, and 
$\bu=\bomega+\bthetaup$ solves \eqref{e:bu-all}, except the normal component of \eqref{e:bu-pOm}.
Then, we choose $q$ such that even the normal component of 
\eqref{e:bu-pOm} is satisfied. It turn out that this is equivalent to
$Aq:=-\bthetaup(q)\cdot\bn=(\bomega-\bg)\cdot\bn$, with $A$ a self-adjoint definite-positive operator,  see Section \ref{sec:not+main} for more details.
This is an equation on $\partial\Omega$ which can be solved directly or iteratively,
see Section \ref{sec:alg} for more details.

It is interesting to note that the harmonic ``pressure" $q$ can be considered as 
the model of the pressure force exerted by the solid boundary to the fluid.
Its role is  to constrain the velocity $\bthetaup(q)$ so that the sum 
$\bomega+\bthetaup(q)$ satisfies the  normal component of the Dirichlet boundary condition.
It tuns out that up to a constant, $p=\pi+q$, i.e. the fluid pressure equals the sum of external force pressure and the solid boundary pressure.

It is important to note that our method for solving \eqref{e:bu-all} does not rely on the 
well-known inf-sup condition, because both $\bomega$  and $\bthetaup(q)$ solve well-posed 
unconstrained Helmholtz-like equation $\alpha I - \mu\Delta$.

As we mentioned at the beginning of this introduction, our method applies to 
time time-dependent Navier-Stokes equations.
In a forthcoming paper we will address in details the numerical analysis of the method
and its application to time-dependent Navier-Stokes equations.

The structure of the paper is as follows.
In Section \ref{sec:not+main} we introduce the notations, working spaces and present the main results.
In Section \ref{sec:prelim} we present some known, and prove some preliminary, results.
In Section \ref{sec:proof} we prove the main results, 
in Section \ref{sec:alg} we present a new algorithm for solving \eqref{e:bu-all},
in Section \ref{sec:num-results}
we present some numerical results and in Section \ref{sec:concl} we draw some final  conclusions.

\section{Notations and the main result}\label{sec:not+main}
All along this paper boldface letters and symbols will denote vectors, vector functions and vector spaces.
Furthermore, $\Omega\subset\R^d$ denotes an open bounded simply connected set of class $C^{1,1}$,
and $\bn$ denote its unitary normal exterior vector on $\pOmega$.
We will consider a $C^{0,1}(\overline{\Omega})$ extension of $\bn$, which 
we will still denote by $\bn$. For example, we can take 
\begin{equation}\label{e:n}
\bn(\bx)=\frac{\nabla \delta(\bx)}{|\nabla \delta(\bx)|}(1-\varphi(\bx)),\quad
\mbox{where }\;\;
\delta(\bx)=-{\rm dist}(\bx,\pOmega),\;\;\; \varphi\in\cD(\Omega).
\end{equation}
The spaces
\begin{subequations}\label{e:spaces(Omega)}
\begin{align}
&\cD(\Omega),\; \cD(\overline{\Omega}),
&&\;\; \bcD(\Omega)=(\cD(\Omega))^d,\; \bcD(\overline{\Omega})=(\cD(\overline{\Omega}))^d,
\\
&L^2(\Omega),\; H^k(\Omega),\; H^k_0(\Omega), 
&&\;\;\bL^2(\Omega)=(L^2(\Omega))^d,\; 
\bH^k(\Omega)=(H^k(\Omega))^d,\; \bH^k_0(\Omega)=(H^k_0(\Omega))^d,	\label{e:L2,Hk}
\end{align}
\end{subequations}
with $0<k\in\mathbb N$ are classical. Each space in \eqref{e:L2,Hk} equipped with the obvious inner product is an Hilbert space.
The dual of $H^k_0(\Omega)$, resp. $\bH^k_0(\Omega)$, will be denoted by
$H^{-k}(\Omega)$, resp. $\bH^{-k}(\Omega)$.
For vector functions 
$\bu=(u_1,\ldots,u_d)$, $\bv=(v_1,\ldots,v_d)$ in $\bcD(\ol{\Omega})$ we will write
\begin{subequations}\label{e:Dnu,Dt*u}
\begin{align}
\hspace*{-8mm}
\nabla\bu &= [\partial_j u_i]\in\R^{d\times d},
\quad
&
\nabla\cdot\bu 
&= 
\sum_{i=1}^d\partial_i u_i
\in\R,
\quad
\nabla\times\bu
=
\nabla\bu - {^t}\nabla\bu
\in\R^{d\times d}
\quad \mbox{in}\;\, \Omega,
\\
\hspace*{-8mm}
\bu\cdot\bv
&=
\sum_{i=1}^d u_iv_i,\quad
&
\nabla\bu:\nabla\bv
&=
{\rm tr}(\nabla\bu\cdot{^t\nabla}\bv)
=
{\rm tr}({^t\nabla}\bu\cdot{\nabla}\bv)\quad \mbox{in}\;\, \Omega,
\\
\bu_\bn &= (\bu\cdot\bn)\bn,\quad
&
\bu_\bt 
&= \bu - \bu_\bn\quad\mbox{on}\;\, \pOmega,
\\
\hspace*{-8mm}
\partial_\bn\bu &= \nabla\bu\cdot\bn,\quad
&\nabla_\bt\cdot\bu &= \nabla\cdot\bu - (\partial_\bn\bu\cdot\bn)\quad\mbox{on}\;\, \pOmega,
\end{align}
\end{subequations}
where ${\rm tr}(\cdot)$ is the trace of the matrix $(\cdot)$
and ${^t}\nabla\bu$ is the transposed of $\nabla\bu$.
We note that $\nabla_\bt\cdot\bu$, called ``tangential divergence of $\bu$", is well-defined 
regardless\footnote{Indeed, if $\bu=(u_1,\ldots,u_d)=0$ on $\partial\Omega$ then 
$\nabla u_i=z_i\bn$ on $\partial\Omega$, for a certain function $z_i$, $i=1,\ldots,d$. 
Then
$\nabla_\bt\cdot\bu=\sum_{i=1}^d z_in_i-\sum_{i,j=1}^d z_in_jn_jn_i=0$} the $C^{0,1}$ extension of $\bn$ and $\bu$.

The space $\bH^1_\bn(\Omega)$, which plays a central role in our analysis, is defined by
\begin{equation}\label{e:H1n(Om)}
\bH^1_\bn(\Omega)
=\{\varphi\bn + \bv,\; \varphi\in H^1({\Omega}),\; \bv\in\bH^1_0(\Omega)\}.
\end{equation}
Clearly $\bH^1_\bn(\Omega)\subset \bH^1(\Omega)$, 
and equipped with the $\bH^1(\Omega)$ inner product it is an Hilbert space.
The dual space of $\bH^1_\bn(\Omega)$ will be denoted by $\bH^{-1}_\bn(\Omega)$.
Note that $\bH^1_\bn(\Omega)$ can  equivalently be defined as 
\begin{subequations}\label{e:H1n(Om)~}
\begin{align}
\bH^1_\bn(\Omega)
&={\rm cl}_{\bH^1(\Omega)}
\{\varphi\bn + \bv,\; \varphi\in \cD(\ol{\Omega}),\; \bv\in\bcD(\Omega)\},
\quad\mbox{or}
\\
\bH^1_\bn(\Omega)
&=\{\bu\in\bH^1(\Omega),\;\; \bu_\bt=0\;\,\mbox{on}\; \pOmega\}.
\end{align}
\end{subequations}

We will also consider the  boundary space
$H^{1/2}(\partial\Omega)$ and its dual $H^{-1/2}(\partial\Omega)$, 
each of them when equipped with the corresponding classical inner product is an Hilbert space.
We note that the norm in $H^{1/2}(\partial\Omega)$ is equivalently given by
\begin{subequations}\label{e:|q|H1/2=inf}
\begin{align}
\|p\|_{H^{1/2}(\partial\Omega)}
&=\inf
\left\{
\|P\|_{H^1(\Omega)},\;\;  P\in H^1(\Omega),\;\; 
P=p\;\; \mbox{on}\;\; \pOmega 
\right\}
\\
&=\|P_h\|_{H^1(\Omega)},\quad \mbox{where}\;\;
P_h -\Delta P_h=0\;\;\mbox{in}\;\;\Omega,
\quad P_h=p\;\;\mbox{on}\;\;\pOmega,
\end{align}
\end{subequations}
and the inner product in $H^{-1/2}(\partial\Omega)$ is given by
\begin{equation}\label{e:(p,q)_H{-1/2}}
(p,q)_{H^{-1/2}(\partial\Omega)}=
(\cR(p),\cR(q))_{H^{1/2}(\partial\Omega)},
\end{equation}
where $\cR:H^{-1/2}(\partial\Omega)\mapsto H^{1/2}(\partial\Omega)$ is the 
Riesz isometry  satisfying
\begin{equation}\label{e:Riesz}
\duality{p,u}_{H^{-1/2}(\pOmega)\times H^{1/2}(\pOmega)}
=
(\cR(p),u)_{H^{1/2}(\pOmega)}
=
(p,\cR^{-1}(u))_{H^{-1/2}(\pOmega)},
\end{equation}
for all $(p,u) \in H^{-1/2}(\pOmega)\times H^{1/2}(\pOmega)$.
Along the paper we will also use the following quotient spaces
\begin{equation}\label{e:spaces(pOmega)}
H^{1/2}_0(\partial\Omega)=
\left\{p\in H^{1/2}(\partial\Omega), \int_{\partial\Omega}pd\sigma=0\right\}, 
\quad
H^{-1/2}_0(\partial\Omega)=
(H^{1/2}_0(\partial\Omega))',
\end{equation}
which equipped with their classical inner products are Hilbert spaces.

Now let us explain our method and give the main results.
Given $\ibf\in\bL^2(\Omega)$ with $\nabla\cdot\ibf\in L^2(\Omega)$,  let $\bvarpi$ and $\pi$~be~defined~by
\begin{subequations}\label{e:f=Dp+w}
\begin{align}
{\ibf}=\bvarpi + \nabla \pi ,\quad\mbox{with}\quad
&\pi\in H^1(\Omega),\quad
\Delta \pi =\nabla\cdot{\ibf}\;\;
\mbox{in}\;\;\Omega,\;\;
\partial_\bn \pi={\ibf}\cdot\bn\;\;\mbox{on}\;\;\pOmega,
\\
&\bvarpi\in\bL^2(\Omega),\;\;\
\nabla\cdot\bvarpi=0\;\;
\mbox{in}\;\;\Omega,\;\; 
\bvarpi\cdot{\bf n}=0\;\;\mbox{on}\;\;\pOmega.
\end{align}
\end{subequations}
We note that the pair $(\bvarpi,\pi)$ satisfying \eqref{e:f=Dp+w} exists and is unique 
(with pressure up to a constant) 
because  $\ibf\cdot\bn\in H^{-1/2}(\pOmega)$, see \cite[Theorem 2.5]{girault+raviart-1986}.
Then we consider ${\bomega}\in \bH^1(\Omega)$ satisfying
\begin{subequations}\label{e:uf-all}
\begin{align}
\alpha {\bomega}
- \mu \Delta {\bomega}
&=
\bvarpi
\;\; in\;\;\bL^2(\Omega),		\label{e:uf-Om}
\\
{\bomega}_\bt
&=\bg_\bt\;\; in\;\; \bH^{1/2}(\partial\Omega), \label{e:uft-pOm}
\\
\nabla\cdot{\bomega}
&=0\;\; in\;\; \bH^{-1/2}(\partial\Omega).\label{e:dnufn-pOm}
\end{align}
\end{subequations}
For $q\in H^{-1/2}_0(\pOmega)$ we consider its harmonic extension in $\Omega$, still denoted 
by $q$,
\begin{equation}\label{e:q}
q\in L^2(\Omega),\;\;
\Delta q=0\;\;\mbox{in}\;\; \cD'(\Omega).
\end{equation}
Then $\bthetaup=\bthetaup(q) \in\bH^1_\bn(\Omega)$ is defined by
\begin{subequations}\label{e:uq-all}
\begin{align}
\alpha \bthetaup
- \mu \Delta \bthetaup
&=
-\nabla q
\;\; in\;\;\bH^{-1}_\bn(\Omega),		\label{e:uq-Om}
\\
\bthetaup_\bt
&=0\;\; in\;\; \bH^{1/2}(\partial\Omega),	\label{e:uqt-pOm}
\\
\nabla\cdot\bthetaup
&=0\;\; in\;\; \bH^{-1/2}(\partial\Omega).	\label{e:dnuqn-pOm}
\end{align}
\end{subequations}
We note that formally, both $\nabla\cdot\bthetaup$ and 
$\nabla\cdot\bomega$ satisfy the equation
$\alpha z - \mu\Delta z=0$ in $L^2(\Omega)$, because $\nabla\cdot\bvarpi=\Delta q=0$.
Furthermore, from the boundary conditions
\eqref{e:dnufn-pOm} and \eqref{e:dnuqn-pOm} we get
$\nabla\cdot\bomega=\nabla\cdot\bthetaup=0$ on $\pOmega$ (see next section)
which implies $z=0$ in $\Omega$, so $\nabla\cdot\bthetaup=\nabla\cdot\bomega=0$
and $\nabla\cdot(\bomega + \bthetaup)=0$ in $\Omega$.
Furthermore, summing  \eqref{e:uf-Om} with \eqref{e:uq-Om}
and  \eqref{e:uft-pOm}  with \eqref{e:uqt-pOm} gives
\begin{subequations}\label{e:uq+uf-all}
\begin{align}
\alpha (\bomega+\bthetaup)
- \mu \Delta (\bomega+\bthetaup)
+\nabla (\pi+q)
&=\ibf\;\; in\;\;\Omega,		\label{e:uq+uf-Om}
\\
\nabla\cdot(\bomega+\bthetaup)
&=0\;\; in\;\; \Omega,
\\
(\bomega+\bthetaup)_\bt
&=\bg_\bt\;\; on\;\; \partial\Omega.	\label{e:uqt+uft-pOm}
\end{align}
\end{subequations}
So, $\bu=\bomega+\bthetaup(q)$ and $p=\pi+q$ satisfy \eqref{e:bu-Om}, \eqref{e:div(bu)-Om} and the tangential component of \eqref{e:bu-pOm}, regardless the choice of $q$. 
We choose $q\in H^{-1/2}_0(\pOmega)$ such that
$\bu=\bomega+\bthetaup(q)$ satisfies also the normal component of \eqref{e:bu-pOm}, i.e.
$(\bomega+\bthetaup(q))\cdot\bn=\bg\cdot\bn$, which would imply that 
$\bu=\bomega+\bthetaup(q)$~solves~\eqref{e:bu-all}.

\noindent
Our main results are given by the following theorem.
\begin{theorem}\label{th:main}
Let $\Omega\subset\R^d$ with $d=2,3$, be a $C^{1,1}$ simply connected open bounded set,
$\ibf\in\bL^2(\Omega)$ with $\nabla\cdot\ibf\in L^2(\Omega)$,
and $\bg\in \bH^{1/2}(\pOmega)$ with $\int_{\pOmega} \bg\cdot\bn d\sigma=0$. 
Then the followings hold.
\begin{enumerate}
\item[(a)] 
The problem \eqref{e:uf-all} has a unique solution $\bomega\in \bH^1(\Omega)$ with 
$\nabla\cdot\bomega=0$ in $L^2(\Omega)$.
\item[(b)]
For every $q\in H^{-1/2}_0(\partial\Omega)$, let us denote still by
$q$ its harmonic $D(\Delta,L^2(\Omega))$ extension.
Then the problem \eqref{e:uq-all} has a unique solution
$\bthetaup=\bthetaup(q)\in\bH^1_\bn(\Omega)$ with $\nabla\cdot\bthetaup=0$  in $L^2(\Omega)$.
\item[(c)]
The map 
$q\in  H^{-1/2}_0(\partial\Omega)
\mapsto 
Aq:=-\bthetaup(q)\cdot{\bf n}\in  H^{1/2}_0(\partial\Omega)$,
is a self-adjoint, positive-definite isomorphism as follows
\begin{subequations}
\begin{align}
\duality{p,Aq}&=\duality{q,Ap},\quad\forall p,q\in H^{-1/2}_0(\pOmega),	\label{e:qAq=qAp}
\\
m\|q\|^2_{H^{-1/2}_0(\pOmega)}
&\leq
\duality{q,Aq}
\leq
M\|q\|^2_{H^{-1/2}_0(\pOmega)},\quad \forall q\in H^{-1/2}_0(\pOmega),	\label{e:m<(Aq,q)<M}
\end{align}
\end{subequations}
with $m=m(\Omega,\alpha,\mu)>0$, $M=M(\Omega,\alpha,\mu)>0$.
\item[(d)]
There exists a unique $q\in  H^{-1/2}_0(\partial\Omega)$ solving 
$Aq=(\bomega-\bg)\cdot\bn$, and 
the unique solution of \eqref{e:bu-all} is given by $\bu=\bomega+\bthetaup(q)$.
\end{enumerate}
\end{theorem}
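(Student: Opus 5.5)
We will prove the four items in order, working through variational formulations in the space $\bH^1_\bn(\Omega)$ and then transferring the information to the divergence.

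\emph{Variational formulation for (a) and (b).} We will use the Green formula adapted to $\bH^1_\bn(\Omega)$. For $\bv=\varphi\bn+\bw$ with $\bw\in\bH^1_0(\Omega)$, the key observation is that on $\pOmega$
\begin{equation*}
\partial_\bn\bu\cdot\bv=(\partial_\bn\bu\cdot\bn)\varphi=(\nabla\cdot\bu-\nabla_\bt\cdot\bu)\varphi .
\end{equation*}
The boundary condition $\nabla\cdot\bu=0$ on $\pOmega$ therefore becomes a natural condition of Robin type, involving the tangential divergence. Concretely, we will write \eqref{e:uq-all} as: find $\bthetaup\in\bH^1_\bn(\Omega)$ such that for all $\bv\in\bH^1_\bn(\Omega)$,
\begin{equation*}
a(\bthetaup,\bv):=\alpha(\bthetaup,\bv)+\mu(\nabla\bthetaup,\nabla\bv)+\mu\int_{\pOmega}(\nabla_\bt\cdot\bthetaup)\,(\bv\cdot\bn)\,d\sigma=-\duality{\nabla q,\bv}.
\end{equation*}
For \eqref{e:uf-all} we use the same form, after lifting $\bg_\bt$ by some $\bG\in\bH^1(\Omega)$.

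\emph{Coercivity.} The boundary term should be rewritten in the form $\mu\int_{\pOmega}\kappa\,(\bv\cdot\bn)^2$, with $\kappa$ (essentially the mean curvature) in $L^\infty$ because $\Omega$ is $C^{1,1}$. Equivalently, one uses the identity $\|\nabla\bv\|^2=\|\nabla\cdot\bv\|^2+\tfrac12\|\nabla\times\bv\|^2+$ a curvature boundary term. Coercivity on $\bH^1_\bn(\Omega)$ then follows for $\alpha>0$ via a trace/Ehrling inequality, or via a Gårding inequality plus uniqueness. \textbf{I expect this to be the main technical obstacle:} showing that the bilinear form $a$ is coercive (or at least Fredholm with trivial kernel) uniformly, including the case $\alpha=0$. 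For $\alpha=0$ I would argue uniqueness directly: if $a(\bv,\bv)=0$, the curl/div identity forces $\nabla\cdot\bv=\nabla\times\bv=0$ with $\bv_\bt=0$, which gives $\bv=0$ since $\Omega$ is simply connected. Lax--Milgram (or the Fredholm alternative) then gives existence and uniqueness.

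\emph{Divergence-free property.} Set $z=\nabla\cdot\bthetaup$. Testing with $\bv=\nabla\psi$ for $\psi\in\cD(\Omega)$ yields $\alpha z-\mu\Delta z=-\Delta q=0$ in $\cD'(\Omega)$, and likewise for $\bomega$ using $\nabla\cdot\bvarpi=0$. The boundary condition $z=0$ follows from the natural boundary condition obtained by testing with $\varphi\bn$. Uniqueness for the Dirichlet problem $\alpha-\mu\Delta$ (in a very weak $L^2$ sense, since $z\in L^2$ only) then gives $z=0$.

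\emph{Part (c).} Symmetry \eqref{e:qAq=qAp} comes from testing. Since $\bthetaup(p)$ is divergence-free,
\begin{equation*}
\duality{p,-\bthetaup(q)\cdot\bn}=-\int_\Omega\nabla p\cdot\bthetaup(q)=a(\bthetaup(p),\bthetaup(q)),
\end{equation*}
which is symmetric in $p,q$. This also gives $\duality{q,Aq}=a(\bthetaup,\bthetaup)\ge c\|\bthetaup\|^2_{\bH^1}$. The upper bound $M$ follows because $\|\bthetaup\|_{\bH^1}\le C\|\nabla q\|_{\bH^{-1}_\bn}\le C\|q\|_{H^{-1/2}_0}$; here the harmonic extension of $q$ lies in $L^2$, and the pairing $\duality{\nabla q,\bv}=-(q,\nabla\cdot\bv)+\duality{q,\bv\cdot\bn}$ makes sense for divergence-free test functions.

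For the lower bound $m$, we argue by duality. We pick $\bv\in\bH^1_\bn(\Omega)$, divergence-free, with $\bv\cdot\bn=\cR(q)$ on $\pOmega$ and $\|\bv\|_{\bH^1}\le C\|q\|_{H^{-1/2}_0}$; this is possible because the flux of $\cR(q)$ is zero. Then
\begin{equation*}
\|q\|^2=\duality{q,\bv\cdot\bn}=-a(\bthetaup,\bv)\le C\|\bthetaup\|_{\bH^1}\|q\|,
\end{equation*}
so $\|q\|\le C\|\bthetaup\|$ and hence $\duality{q,Aq}\ge m\|q\|^2$. Having $A$ map into $H^{1/2}_0$ (zero mean) uses $\nabla\cdot\bthetaup=0$. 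The isomorphism property then follows from Lax--Milgram.

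\emph{Part (d).} Note that $(\bomega-\bg)\cdot\bn$ has zero mean, because $\int_{\pOmega}\bomega\cdot\bn=\int_\Omega\nabla\cdot\bomega=0$ and $\bg$ has zero flux by assumption. By (c) there is a unique $q$ with $Aq=(\bomega-\bg)\cdot\bn$. By \eqref{e:uq+uf-all}, $\bu=\bomega+\bthetaup(q)$ and $p=\pi+q$ then solve \eqref{e:bu-all}. Uniqueness follows from the classical uniqueness theory for the generalized Stokes problem.
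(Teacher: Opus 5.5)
Your architecture is the paper's: the same variational problem on $\bH^1_\bn(\Omega)$ with the curvature boundary term, the same div--curl identity, the same mechanism ``$z=\nabla\cdot\bthetaup$ solves $\alpha z-\mu\Delta z=0$ and has zero trace'', and the same symmetry identity for $A$. However, you pass over exactly the step the paper singles out as delicate.

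\textbf{The gap.} For $\bthetaup(q)$ you assert that $z=0$ on $\pOmega$ ``follows from the natural boundary condition obtained by testing with $\varphi\bn$''. Reading off a natural boundary condition needs two things: the Green formula $\duality{\partial_\bn\bthetaup,\bphi}=\int_\Omega(\bphi\cdot\Delta\bthetaup+\nabla\bthetaup:\nabla\bphi)$, and the boundary trace of $\nabla\cdot\bthetaup$ from Lemma~\ref{l:div(u),H1L2}. Both require $\bthetaup\in\bD(\Delta,\bH^1(\Omega))$. Here $\mu\Delta\bthetaup=\alpha\bthetaup+\nabla q$, where $q$ is only the $L^2$ harmonic extension of $H^{-1/2}$ data, so $\nabla q\notin\bL^2(\Omega)$ in general. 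The only available trace of $z$ is the $H^{-1/2}$ trace of $z\in D(\Delta,L^2(\Omega))$, and nothing in your argument connects that trace to the variational formulation. (For $\bomega$ your argument is fine, because $\alpha\bomega-\mu\Delta\bomega\in\bL^2(\Omega)$.)

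This step is load-bearing: the symmetry of $A$, the property $Aq\in H^{1/2}_0(\pOmega)$, and the identity $\duality{q,Aq}=a(\bthetaup,\bthetaup)$ all use $\nabla\cdot\bthetaup=0$.

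\textbf{How the paper closes it.} The paper uses density. Take smooth $q^k\to q$ in $D(\Delta,L^2(\Omega))$. Then $\bthetaup(q^k)\to\bthetaup(q)$ in $\bH^1(\Omega)$, and each $z_k=\nabla\cdot\bthetaup(q^k)$ has zero trace by the regular argument. Since $\alpha z_k-\mu\Delta z_k=-\Delta q^k\to 0$ in $L^2(\Omega)$, we get $z_k\to z$ in $D(\Delta,L^2(\Omega))$, so the zero traces pass to the limit.

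\textbf{A shorter repair in your framework.} Enlarge your test class $\bphi=\nabla\psi$ from $\psi\in\cD(\Omega)$ to $\psi\in H^2(\Omega)\cap H^1_0(\Omega)$. Such $\bphi$ lie in $\bH^1_\bn(\Omega)$, and the curl term vanishes. The right-hand side $-\duality{q,\partial_\bn\psi}+\int_\Omega q\Delta\psi$ is zero by \eqref{e:duality(D(L;L2)}, since $\Delta q=0$ and $\psi=0$ on $\pOmega$. You obtain $\int_\Omega z(\mu\Delta\psi-\alpha\psi)\,d\bx=0$ for all such $\psi$. Hence $z=0$, because $\mu\Delta-\alpha$ maps $H^2(\Omega)\cap H^1_0(\Omega)$ onto $L^2(\Omega)$.

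\textbf{The remaining steps.} These are sound, and two of them differ from the paper.
\begin{enumerate}
\item For the lower bound in (c), you test with a divergence-free lift $\bv\in\bH^1_\bn(\Omega)$ of $\cR(q)\bn$, which yields $\|q\|^2_{H^{-1/2}_0(\pOmega)}$ directly. The paper instead applies (c) of Lemma~\ref{l:collection} to the harmonic extension $q$ in $\Omega$ to control $\|q\|_{L^2(\Omega)}$, and then needs the trace bound \eqref{e:|q|H-1/2<C|q|L^2(Delta)}. Both rest on the same Bogovskii-type lemma; yours avoids the extra trace estimate.
\item For coercivity, your G\aa rding-plus-uniqueness argument replaces the paper's citation of von Wahl.
\end{enumerate}

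\textbf{A caveat on the statement itself.} Your uniqueness step for $\alpha=0$ claims that $\nabla\cdot\bv=\nabla\times\bv=0$ and $\bv_\bt=0$ imply $\bv=0$. This actually requires $\pOmega$ to be connected, not $\Omega$ simply connected. In $d=3$, the spherical shell $1<|\bx|<2$ carries $\bv=\nabla(|\bx|^{-1})\neq 0$ with all three properties. The paper's reliance on von Wahl's estimate, and hence its norm equivalence for $\alpha=0$, fails on the same example. So this caveat applies to the theorem as stated, not to your route specifically.
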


\begin{remark}\label{r:main+comments}
From (a) and (b) of Theorem \ref{th:main} we get two ways of constructing divergence-free functions with a prescribed boundary tangential component, each by solving a vector Helmholtz-like equation with a  rotational or gradient of a harmonic function right hand side term.
The components of these vector Helmholtz-like equations are coupled only on the boundary, 
which from numerical viewpoint is attractive.

\noindent
From (c), as the operator $A$ is self-adjoint and positive definite, we can use 
direct or iterative methods to solve numerically 
the equation $Aq=(\bomega-\bg)\cdot\bn$.
In Section \ref{sec:alg} we will present two methods for solving the equation
$Aq=(\bomega-\bg)\cdot\bn$, so for solving \eqref{e:bu-all}, and in Section 
\ref{sec:num-results} we will present some numerical results.
\end{remark}

\section{Preliminary results}\label{sec:prelim}

\noindent
In this section we will present some well-known results and prove some preliminary results 
that we will use in the next sections.
The following is a classical regularity result, see \cite[Corollary 2.2.2.4, Corollary 2.2.2.6]{grisvard-1985}.
\begin{lemma}\label{l:D,N-iso}
The maps 
$(\Delta,\gamma_0):p\in H^2(\Omega)
\mapsto 
\{\Delta p,\gamma_0 p\}\in L^2(\Omega)\times H^{3/2}(\partial\Omega)$ and
$(\Delta,\gamma_1):p\in H^2(\Omega)
\mapsto 
\{\Delta p,\gamma_1 p\}\in L^2(\Omega)\times H^{1/2}(\partial\Omega)$
are isomorphisms,
where $\gamma_0p$ is the boundary trace of $p$ on $\pOmega$,
and $\gamma_1p$ is the boundary trace of $\partial_\bn p:=\nabla p\cdot\bn$ on $\pOmega$.
\end{lemma}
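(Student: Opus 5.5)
Since this lemma is classical, the plan is to reproduce the standard argument: prove that each map is bounded, bijective (in the appropriate sense), and then obtain continuity of the inverse from the open mapping theorem. Boundedness is immediate. $\Delta:H^2(\Omega)\to L^2(\Omega)$ is continuous, and for a $C^{1,1}$ boundary the trace theorem gives $\gamma_0:H^2(\Omega)\to H^{3/2}(\pOmega)$. Since $\bn$ has a Lipschitz extension, $\nabla p\cdot\bn\in H^1(\Omega)$, and hence $\gamma_1:H^2(\Omega)\to H^{1/2}(\pOmega)$ is continuous as well.

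For the Dirichlet map the key steps are as follows.
\begin{enumerate}
\item \emph{Injectivity.} If $\Delta p=0$ and $\gamma_0p=0$, Green's formula gives $\int_\Omega|\nabla p|^2=0$, so $p=0$.
\item \emph{Lifting.} Given $(f,g)\in L^2(\Omega)\times H^{3/2}(\pOmega)$, use the surjectivity of the trace $H^2(\Omega)\to H^{3/2}(\pOmega)$ on $C^{1,1}$ domains (a right inverse built in local charts) to pick $G\in H^2(\Omega)$ with $\gamma_0G=g$.
\item \emph{Weak solution.} Solve $\Delta w=f-\Delta G$ with $w\in H^1_0(\Omega)$ by Lax--Milgram, using the Poincaré inequality.
\item \emph{Regularity.} Show $w\in H^2(\Omega)$, so that $p=w+G$ is the required preimage.
\end{enumerate}

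For the Neumann map the argument is analogous. The weak problem is posed in $H^1(\Omega)$: find $p$ with $\int\nabla p\cdot\nabla v=-\int fv+\int_{\pOmega}hv$, using Lax--Milgram on the quotient by constants (Poincaré--Wirtinger inequality). Here one must record the compatibility condition $\int_\Omega f=\int_{\pOmega}h$ and the kernel consisting of the constants. The ``isomorphism'' should therefore be read modulo constants, onto the compatible subspace of $L^2(\Omega)\times H^{1/2}(\pOmega)$. I would state this explicitly, since this is how the lemma is used later, for instance for $\pi$ in \eqref{e:f=Dp+w}.

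The main obstacle is the $H^2$ regularity up to the boundary for domains that are merely $C^{1,1}$. The plan is as follows.
\begin{enumerate}
\item Localize with a partition of unity. Interior regularity is standard, via difference quotients or mollification.
\item Near the boundary, flatten $\pOmega$ with a $C^{1,1}$ diffeomorphism. The Laplacian becomes a divergence-form operator $\partial_i(a_{ij}\partial_j\cdot)$ with uniformly elliptic $W^{1,\infty}$ (Lipschitz) coefficients, and the Neumann condition becomes a conormal condition.
\item Apply tangential difference quotients to obtain $\partial_k\nabla w\in L^2$ for the tangential directions $k<d$. The Lipschitz regularity of $a_{ij}$ is exactly what makes the commutator terms bounded in $L^2$.
\item Recover $\partial_{dd}w$ by solving the equation for it, using $a_{dd}\ge c>0$.
\end{enumerate}
The a priori estimate $\|p\|_{H^2}\le C(\|\Delta p\|_{L^2}+\|\gamma p\|)$, plus $\|p\|_{L^2}$ in the Neumann case and then removed on the quotient, follows from the same computation. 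Alternatively, it follows directly from the open mapping theorem once bijectivity is established.
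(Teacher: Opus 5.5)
Your argument is correct, and it is the standard route to these classical results: Lax--Milgram first, then $H^2$ regularity by localization, $C^{1,1}$ flattening to Lipschitz divergence-form coefficients, tangential difference quotients, and recovery of $\partial_{dd}$ from the equation. The paper does not prove this lemma but simply cites it from \cite[Corollary 2.2.2.4, Corollary 2.2.2.6]{grisvard-1985}. Your caveat on the Neumann map is also right and corrects the statement as written: $(\Delta,\gamma_1)$ vanishes on constants and its range is the closed subspace $\{(f,h)\in L^2(\Omega)\times H^{1/2}(\pOmega):\ \int_\Omega f\,d\bx=\int_{\pOmega}h\,d\sigma\}$, so it is an isomorphism only from $H^2(\Omega)/\R$ onto that subspace.
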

Note that in the following, whenever there is no ambiguity, we will write $p$, 
resp, $\partial_\bn p$, instead of $\gamma_0p$, resp. $\gamma_1p$.
We will need the following spaces equipped with their inner-products
\begin{subequations}
\begin{align}
\hspace*{-7mm}
D(\Delta,H^1(\Omega))
&=\left\{p\in H^1(\Omega),\; \Delta p\in L^2(\Omega)\right\},
\quad
(p,q)_{D(\Delta,H^1(\Omega))}
=
(p,q)_{H^1(\Omega)}+ (\Delta p,\Delta q)_{L^2(\Omega)},	\label{e:D(D,H1)}
\\
\hspace*{-7mm}
D(\Delta,L^2(\Omega))
&=
\left\{p\in L^2(\Omega),\; \Delta p\in L^2(\Omega)\right\},
\quad
(p,q)_{D(\Delta,L^2(\Omega))}
=
(p,q)_{L^2(\Omega)}+ (\Delta p,\Delta q)_{L^2(\Omega)}.	\label{e:D(D,L2)}
\end{align}
\end{subequations}

\noindent
In order to simplify the notations, in the following we will drop 
the space underscripts in the duality brackets $\duality{\cdot,\cdot}$, whenever the spaces
are always clear from the context.
 \begin{lemma}\label{l:D(A;L2,H1)}
The following results hold.
\begin{itemize}
\item[(a.1)]
The space $D(\Delta,H^1(\Omega))$ equipped with the  inner-product \eqref{e:D(D,H1)} 
is an Hilbert space and 
$\cD(\overline{\Omega})$ is dense in $D(\Delta,H^1(\Omega))$.
\item[(a.2)]
The map 
$(\Delta,\partial_\bn):p\in \cD(\overline{\Omega})\mapsto (\Delta p,\partial_\bn p)\in \cD(\overline{\Omega})\times \cD(\partial\Omega)$,
with $\partial_\bn p=\nabla p\cdot\bn$,
extends continuously to an isomorphism $(\Delta,\partial_\bn):D(\Delta,H^1(\Omega))\mapsto 
L^2(\Omega)\times H^{-1/2}(\partial\Omega)$  and the following formula of integration by parts  holds
\begin{equation}\label{e:duality(D(L;H1)}
\duality{\partial_\bn p, q}
=
\int_\Omega (q\Delta p + (\nabla p\cdot\nabla q)) {d\bx},
\quad
\forall p\in D(\Delta, H^1(\Omega)),\;\; 
\forall q\in H^1(\Omega).
\end{equation}
\item[(b.1)]
The space $D(\Delta,L^2(\Omega))$  equipped with the inner-product \eqref{e:D(D,L2)}
is an Hilbert space and $\cD(\ol{\Omega})$ is dense in $D(\Delta,L^2(\Omega))$.
\item[(b.2)]
The map 
$(p,\partial_\bn):p\in \cD(\overline{\Omega})\mapsto (p,\partial_\bn p)
\in  \cD(\partial\Omega)\times C^{0,1}(\pOmega)$
extends continuously from $D(\Delta,L^2(\Omega))$ to 
$H^{-1/2}(\pOmega)\times H^{-3/2}(\partial\Omega)$  and the following formula  of 
integration by parts holds
\begin{equation}\label{e:duality(D(L;L2)}
\duality{p,\partial_\bn q} - \duality{\partial_\bn p,q}
=
\int_\Omega (p\Delta q - q\Delta p) {d\bx},\quad
\forall p\in D(\Delta,L^2(\Omega)),\;\; 
\forall q\in H^2(\Omega).
\end{equation}
Furthermore, there exists $C=C(\Omega)>0$ such that
\begin{equation}\label{e:|q|H-1/2<C|q|L^2(Delta)}
\|p\|^2_{H^{-1/2}(\partial\Omega)}
\leq 
C(\Omega)\|p\|^2_{D(\Delta,L^2(\Omega))},\quad \forall p\in D(\Delta,L^2(\Omega)).
\end{equation}
\end{itemize}
\end{lemma}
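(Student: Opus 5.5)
The plan follows the standard Lions–Magenes route for the spaces $D(\Delta,X)$. We prove density by duality, define the traces on the dense subspace $\cD(\ol{\Omega})$ through Green's formulas, bound them, and extend by continuity.

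For (a.1) and (b.1), completeness is immediate. If $(p_n)$ is Cauchy in $D(\Delta,H^1(\Omega))$, then $p_n\to p$ in $H^1$ and $\Delta p_n\to z$ in $L^2$. Distributional limits give $z=\Delta p$. The same argument works with $L^2$ in place of $H^1$.

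For density we use Hahn–Banach. Let $\ell$ be a continuous functional on $D(\Delta,L^2(\Omega))$ vanishing on $\cD(\ol{\Omega})$. By Riesz it has the form $\ell(p)=\int_\Omega(fp+g\Delta p)$ with $f,g\in L^2(\Omega)$. Extend $f,g$ by zero to $\tilde f,\tilde g$ on $\R^d$. Testing with all $p\in\cD(\R^d)$ gives $\tilde f+\Delta\tilde g=0$ in $\cD'(\R^d)$. Hence $\Delta\tilde g\in L^2(\R^d)$, and since $\tilde g$ has compact support, $\tilde g\in H^2(\R^d)$. Because $\tilde g$ vanishes outside $\ol{\Omega}$ and $\Omega$ is $C^{1,1}$, we get $g\in H^2_0(\Omega)$. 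Approximating $g$ by $\cD(\Omega)$ functions and integrating by parts twice gives $\ell(p)=\int_\Omega p(f+\Delta g)=0$ for every $p\in D(\Delta,L^2(\Omega))$.

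For $D(\Delta,H^1(\Omega))$ the functional has the form $\ell(p)=\int(fp+\mathbf F\cdot\nabla p+g\Delta p)$. Extending by zero gives $\tilde f-\nabla\cdot\tilde{\mathbf F}+\Delta\tilde g=0$ on $\R^d$. Then $\Delta\tilde g\in H^{-1}$, so $\tilde g\in H^1_0(\Omega)$. The identity $\ell(p)=\int p f+\mathbf F\cdot\nabla p-\nabla g\cdot\nabla p=0$ then follows by approximating $g$ in $H^1_0$ and using the distributional equation inside $\Omega$ against $p\in H^1$ (density of $\cD(\Omega)$ in $H^1_0$ combined with continuity). This characterization of zero extensions, i.e.\ that a compactly supported extension vanishing outside $\Omega$ lies in $H^k_0(\Omega)$, is the delicate point. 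We invoke it for $C^{1,1}$ (Lipschitz suffices) domains, as in Grisvard.

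For (a.2), take $p\in\cD(\ol{\Omega})$ and $q\in H^1(\Omega)$; then \eqref{e:duality(D(L;H1)} is the classical Green formula. To define $\partial_\bn p$ on $H^{1/2}(\pOmega)$, set $\duality{\partial_\bn p,\phi}$ equal to the right-hand side with $q$ any $H^1$ lifting of $\phi$. This is independent of the lifting by Green's formula for $q\in H^1_0$. Choosing the minimal lifting of \eqref{e:|q|H1/2=inf} gives $\|\partial_\bn p\|_{H^{-1/2}}\le \|p\|_{D(\Delta,H^1)}$, so density extends the map and the formula. For the isomorphism, solve the Neumann-type problem $-\Delta p+p = h$-adjusted variationally: given $(h,\psi)\in L^2\times H^{-1/2}$, find $p\in H^1$ with $\int(\nabla p\cdot\nabla q+q\Delta p)$ matching. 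We read the map as $(\Delta-I,\partial_\bn)$, or as an isomorphism modulo constants with a compatibility condition, since the pure Neumann Laplacian has a kernel. Lax–Milgram then gives existence and uniqueness, and the open mapping theorem gives continuity of the inverse.

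For (b.2), take $q\in H^2(\Omega)$ and use Lemma \ref{l:D,N-iso}. Given $\psi\in H^{1/2}(\pOmega)$, let $q_\psi\in H^2$ solve $\Delta q_\psi=0$, $\gamma_0q_\psi=0$, $\gamma_1 q_\psi=\psi$ (via a lifting in $H^2$ with prescribed traces $(0,\psi)$, Grisvard 1.5.1.2). Define
\begin{equation*}
\duality{p,\psi}:=\int_\Omega(p\Delta q_\psi-q_\psi\Delta p)\,d\bx .
\end{equation*}
This gives $|\duality{p,\psi}|\le C\|p\|_{D(\Delta,L^2)}\|\psi\|_{H^{1/2}}$, which yields \eqref{e:|q|H-1/2<C|q|L^2(Delta)}. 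Similarly, taking an $H^2$ lifting with traces $(\phi,0)$ for $\phi\in H^{3/2}(\pOmega)$ defines $\partial_\bn p\in H^{-3/2}$. Both definitions agree with the classical traces on $\cD(\ol{\Omega})$ by Green's second identity. Density from (b.1) extends them, and the identity \eqref{e:duality(D(L;L2)} holds by continuity, since both sides are continuous in $p$ for fixed $q\in H^2$.

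We expect the main obstacle to be the density statements, specifically the zero-extension characterization of $H^2_0(\Omega)$ and $H^1_0(\Omega)$. Everything else is lifting bounds from Lemma \ref{l:D,N-iso} and the trace theorem.
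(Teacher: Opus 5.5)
Your proposal is correct in substance, and it is much more self-contained than the paper's proof. The paper proves almost nothing here. Completeness, density, the extension of $\partial_\bn$ on $D(\Delta,H^1(\Omega))$ and the extension of $(p,\partial_\bn p)$ on $D(\Delta,L^2(\Omega))$ are all cited from Lions--Magenes and Grisvard. The only argument actually carried out is for \eqref{e:|q|H-1/2<C|q|L^2(Delta)}, and there the paper does not use the abstract $H^2$ lifting with traces $(0,\psi)$ that you take from Grisvard 1.5.1.2. It builds an explicit lifting instead:
\begin{itemize}
\item it extends $g\in\cD(\pOmega)$ by $g-\Delta g=0$, so $\|g\|_{H^1(\Omega)}=\|g\|_{H^{1/2}(\pOmega)}$;
\item it sets $\varphi=g\delta$ with $\delta=-\mathrm{dist}(\cdot,\pOmega)$, so $\varphi=0$, $\partial_\bn\varphi=g$ on $\pOmega$ and $\Delta\varphi=g(\delta+\Delta\delta)+2\nabla g\cdot\nabla\delta$;
\item Green's second identity for smooth $p$ then bounds $\langle p,g\rangle$ by $C\|p\|_{D(\Delta,L^2(\Omega))}\|g\|_{H^1(\Omega)}$.
\end{itemize}
Only $\|\varphi\|_{L^2}$ and $\|\Delta\varphi\|_{L^2}$ enter, so no quantitative $H^2$ control of the lifting is needed. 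The same device is reused in Lemma \ref{l:div(u),H1L2} for $\nabla\cdot\bu$ and $\partial_\bn\bu\cdot\bn$.

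Your abstract lifting relies on a deeper theorem, but it has two advantages. It gives the $H^{-3/2}$ normal trace by the same mechanism, which the paper only cites. It also sidesteps the fact that $\delta$ is $C^{1,1}$ only near $\pOmega$, so the paper's $\varphi=g\delta$ really needs a cutoff. Your Hahn--Banach density proof, via zero extensions lying in $H^2_0(\Omega)$, resp.\ $H^1_0(\Omega)$, is the classical duality argument that the paper only cites.

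You are also right that (a.2) is false as literally stated. Constants lie in the kernel of $(\Delta,\partial_\bn)$. Taking $q=1$ in \eqref{e:duality(D(L;H1)} gives $\langle\partial_\bn p,1\rangle=\int_\Omega\Delta p$, so the range is a proper subspace. Your repair is the right one: use $(\Delta-I,\partial_\bn)$, or work modulo constants with this compatibility condition. The later sections only use the extended trace and Green's formula from (a.2), so the repair costs nothing downstream.

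Two local slips need fixing.
\begin{itemize}
\item In (b.2), no $q_\psi$ can satisfy $\Delta q_\psi=0$ together with $\gamma_0 q_\psi=0$, since such a function would vanish identically. Drop the harmonicity and keep only the $H^2$ lifting with traces $(0,\psi)$ and $\|q_\psi\|_{H^2(\Omega)}\le C\|\psi\|_{H^{1/2}(\pOmega)}$, which is all your bound uses. Independence of the lifting then follows from Green's identity on $\cD(\ol{\Omega})$ together with density.
\item In the density proof for $D(\Delta,H^1(\Omega))$, the equation inside $\Omega$ only yields $\int_\Omega(fp+(\mathbf F-\nabla g)\cdot\nabla p)=0$ for $p\in H^1_0(\Omega)$. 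To reach all $p\in H^1(\Omega)$, argue as follows. Once $g\in H^1_0(\Omega)$, this expression equals $\ell(p)$ on $D(\Delta,H^1(\Omega))$. It is continuous on $H^1(\Omega)$ and vanishes on $\cD(\ol{\Omega})$ by hypothesis. Hence it vanishes identically, because $\cD(\ol{\Omega})$ is dense in $H^1(\Omega)$.
\end{itemize}
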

{\bf Proof}.
For the proof see \cite{lions+magenes-1961}.
Alternatively, for (a) see \cite[p. 59, 62]{grisvard-1985}.
For (b.1) and first part of (b.2)  
see \cite[p. 54, 57]{grisvard-1985}.
For \eqref{e:|q|H-1/2<C|q|L^2(Delta)} see \cite[Theorem 2.5.2.1]{grisvard-1985}. 

We will provide a direct proof of \eqref{e:|q|H-1/2<C|q|L^2(Delta)} 
because we will use it again in the following. 
Let $p\in\cD(\ol{\Omega})$,  $g\in\cD(\pOmega)$ and $\delta(x)=-{\rm dist}(x,\pOmega)$. 
If $\varphi=g\delta$ then $\varphi\in C^{1,1}(\ol{\Omega})$ and
$\varphi=0$, $\partial_\bn\varphi=g$ on $\pOmega$, which follow easily from 
$\delta\in C^{1,1}(\ol{\Omega})$ and $\nabla \delta=\bn$ on $\pOmega$.
Furthermore, we consider a $H^1(\Omega)$ extension of $g$, denoted by the same letter, 
and satisfying $g-\Delta g=0$. 
It is well known that $g\in H^2(\Omega)$ and 
$\|g\|_{H^{1/2}(\pOmega)}=\|g\|_{H^1(\Omega)}$, see \eqref{e:|q|H1/2=inf}.
Then by applying \eqref{e:duality(D(L;L2)} with $q=\varphi$ we get
\begin{subequations}
\begin{align*}
\duality{p,g}%
=
\duality{p,\partial_\bn \varphi}%
&=
\int_\Omega (p\Delta\varphi - \varphi\Delta p)d\bx \\
&=
\int_\Omega (p(g(\delta + \Delta \delta) + 2\nabla g\cdot\nabla \delta) - g\delta\Delta p)d\bx ;\quad\mbox{hence}\\
|\duality{p,g}|%
&\leq 
C\|p\|_{D(\Delta,L^2(\Omega))}\|g\|_{H^1(\Omega)}\\
&\leq
C\|p\|_{D(\Delta,L^2(\Omega))}\|g\|_{H^{1/2}(\pOmega)},
\quad\forall p, g\in\cD(\partial\Omega),
\end{align*}
\end{subequations}
with $C$ independent of $p$.
Then \eqref{e:|q|H-1/2<C|q|L^2(Delta)} follows from the last inequality and the density of
$\cD(\ol{\Omega})$ in $D(\Delta,L^2(\Omega))$ and $H^1(\Omega)$.
\hfill$\Box$

\begin{lemma}\label{l:div(u),H1L2}
Let us define %
\begin{equation}\label{e:bD^1_Delta}
\hspace*{-5mm}
\bD(\Delta,\bH^1(\Omega))
=\{\bu\in\bH^1(\Omega),\; \Delta\bu\in \bL^2(\Omega)\},
\quad
(\bu,\bv)_{\bD(\Delta,\bH^1(\Omega))}
=
(\bu,\bv)_{\bH^1(\Omega)}+ (\Delta\bu,\Delta\bv)_{\bL^2(\Omega)}.
\end{equation}
\begin{enumerate}
\item[(a)]
The space  $\bD(\Delta,\bH^1(\Omega))$  equipped with the inner-product in \eqref{e:bD^1_Delta} is an Hilbert space and $\bcD(\ol{\Omega})$ is dense in  $\bD(\Delta,\bH^1(\Omega))$.
\item[(b)]
The map
$\bu\in\bcD(\ol{\Omega})\mapsto\nabla\cdot\bu\in\cD(\partial\Omega)$ extends continuously 
from $\bD(\Delta,\bH^1(\Omega))$ to $H^{-1/2}(\partial\Omega)$.
Furthermore, there exists $C(\Omega)>0$ such that
\begin{equation}\label{e:|divu|H-1/2}
\|\nabla\cdot\bu\|_{H^{-1/2}(\partial\Omega)}
\leq 
C(\Omega)\|\bu\|_{\bD(\Delta,\bH^1(\Omega))},\quad \forall\bu\in\bD(\Delta,\bH^1(\Omega)).
\end{equation}
\item[(c)]
Similarly, the maps
$\bu\in\bcD(\ol{\Omega})\mapsto \partial_\bn\bu\cdot\bn\in C^{0,1}({\partial\Omega})$,
$\bu\in\bcD(\ol{\Omega})\mapsto \nabla_\bt\cdot \bu\in C^{0,1}({\partial\Omega})$ and
$\bu\in\bcD(\ol{\Omega})\mapsto \nabla_\bt\cdot\bu_\bt \in C^{0,1}({\partial\Omega})$,
extend continuously from 
$\bu\in\bD(\Delta,\bH^1(\Omega))$ to $H^{-1/2}(\partial\Omega)$ and
the following equalities hold for every $\bu\in\bD(\Delta,\bH^1(\Omega))$
\begin{subequations}\label{e:div(u)}
\begin{align}
\nabla\cdot\bu
&=\partial_\bn\bu\cdot\bn + \nabla_\bt\cdot \bu \quad  in\;\; H^{-1/2}(\partial\Omega),
\label{e:div(u)=Du*n+Dt*u}
\\
\nabla_\bt\cdot\bu 
&= 
\nabla_\bt\cdot\bu_\bt
+
(\nabla_\bt\cdot\bn)(\bu\cdot\bn)\quad in\;\; H^{-1/2}(\partial\Omega),\quad \mbox{and}
\label{e:Dt*u=Dt*ut+H(u*n)}
\\
\nabla_\bt\cdot\bn
&=
-\Delta \delta + \bn\cdot D^2\delta\cdot\bn\quad  in\;\; L^\infty(\pOmega).		\label{e:Dt*n=H}
\end{align}
Here $\nabla_\bt\cdot\bn$ is the mean curvature of $\pOmega$ and it is non negative for $\Omega$ convex.
\end{subequations}
\item[(d)]
Furthermore, for $\bu\in\bH^1_\bn(\Omega)$ with $\Delta\bu\in\bL^2(\Omega)$ we have
\begin{equation}\label{e:D*u=H(u*n)}
\nabla_\bt\cdot\bu_\bt=0\;\; and\;\;
\nabla\cdot\bu 
= 
\partial_\bn\bu\cdot\bn
+
(\nabla_\bt\cdot\bn)(\bu\cdot\bn) 
\quad in\;\; H^{-1/2}(\partial\Omega).
\end{equation}
\item[(e)]
Lastly, we have
\begin{equation}\label{e:<Dnu,phi>=<Dnu*n,phi*n>}
\duality{\partial_\bn\bu,\bphi}
=
\duality{\partial_\bn\bu\cdot\bn,\bphi\cdot\bn},
\quad
\forall \bu\in\bD(\Delta,\bH^1(\Omega)),\;\; \forall\bphi\in\bH^1_\bn(\Omega).
\end{equation}
\end{enumerate}
\end{lemma}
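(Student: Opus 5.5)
Part (a) is the standard argument with $D(\Delta,H^1(\Omega))$ and Lemma \ref{l:D(A;L2,H1)}(a.1) applied componentwise. I would first show $\bD(\Delta,\bH^1(\Omega))=(D(\Delta,H^1(\Omega)))^d$ with equal inner products. Then completeness and density of $\bcD(\ol{\Omega})$ follow directly from (a.1).

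For (b) I would use the scalar integration by parts formula \eqref{e:duality(D(L;H1)}. For $\bu\in\bcD(\ol{\Omega})$ and a boundary test function $g\in H^{1/2}(\pOmega)$, I take the extension $G\in H^1(\Omega)$ with $\|G\|_{H^1(\Omega)}=\|g\|_{H^{1/2}(\pOmega)}$, as in \eqref{e:|q|H1/2=inf}. Writing $\nabla\cdot\bu\,G=\nabla\cdot(G\bu)-\bu\cdot\nabla G$ does not help, since it produces $\bu\cdot\bn$ rather than $\nabla\cdot\bu$ on the boundary. Instead I would imitate the direct proof of \eqref{e:|q|H-1/2<C|q|L^2(Delta)}. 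Set $\varphi=G\delta$, so that $\varphi=0$ and $\nabla\varphi=g\bn$ on $\pOmega$.

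The key observation is that, with $\nabla\times\bu=\nabla\bu-{^t}\nabla\bu$,
\[
\Delta\bu=\nabla(\nabla\cdot\bu)+\nabla\cdot(\nabla\times\bu)\ \ (\text{row-wise divergence}).
\]
Then, integrating by parts against $\nabla\varphi$,
\[
\int_{\pOmega}(\nabla\cdot\bu)\,g\,d\sigma=\int_{\pOmega}(\nabla\cdot\bu)\,\partial_\bn\varphi\,d\sigma=\int_\Omega\Big((\nabla\cdot\bu)\Delta\varphi+\nabla(\nabla\cdot\bu)\cdot\nabla\varphi\Big)d\bx .
\]
I substitute $\nabla(\nabla\cdot\bu)=\Delta\bu-\nabla\cdot(\nabla\times\bu)$ and integrate the curl term by parts once more. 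The boundary term is $(\nabla\times\bu)\bn\cdot\nabla\varphi=g\,\bn\cdot(\nabla\times\bu)\bn=0$ by antisymmetry, and the interior term is $\int(\nabla\times\bu):\nabla^2\varphi$. Every remaining term is bounded by $C\|\bu\|_{\bD(\Delta,\bH^1)}\|\varphi\|_{H^2}$.

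This is the main obstacle: $\varphi=G\delta$ is only in $H^2$ if $G\in H^2$, which needs $g\in H^{3/2}$. To handle it I would pair the terms carefully, using the $\delta$ factor as in the lemma's proof. The problematic term is $\nabla^2\varphi\ni\nabla G\otimes\nabla\delta+G\nabla^2\delta+\delta\nabla^2G$. The first two lie in $L^2$ with norm controlled by $\|G\|_{H^1}$. For the last, I would take $G$ with $G-\Delta G=0$, which gives interior $H^2$ with $\|\delta\nabla^2 G\|_{L^2}\le C\|G\|_{H^1}$ by weighted interior estimates (Caccioppoli on Whitney cubes). If that proves delicate, I would alternatively integrate $\delta\nabla^2G$ by parts once more onto $\nabla\bu$, which only requires $\bu\in\bH^1$ and $\Delta\bu\in\bL^2$.

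Density of $\bcD(\ol{\Omega})$ from (a) then yields the extension and \eqref{e:|divu|H-1/2}.

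For (c), I would treat $\partial_\bn\bu\cdot\bn=\sum_i n_i\partial_\bn u_i$ componentwise. Each $\partial_\bn u_i\in H^{-1/2}$ by Lemma \ref{l:D(A;L2,H1)}(a.2), and multiplication by the Lipschitz trace $n_i$ is continuous on $H^{-1/2}(\pOmega)$, being the dual of multiplication on $H^{1/2}$. Then $\nabla_\bt\cdot\bu$ is defined by \eqref{e:div(u)=Du*n+Dt*u}. Identities \eqref{e:Dt*u=Dt*ut+H(u*n)} and \eqref{e:Dt*n=H} are pointwise computations for smooth $\bu$ using $\bn=\nabla\delta$ near $\pOmega$ and $\partial_\bn\bn\cdot\bn=0$ (because $|\bn|=1$); they extend by density. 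Mean curvature nonnegativity for convex $\Omega$ follows from concavity of $\delta$ (note the sign convention).

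For (d), $\bu_\bt=0$ on $\pOmega$ gives $\nabla_\bt\cdot\bu_\bt=0$, by the footnote argument for smooth approximants in $\bH^1_\bn$ and continuity. Then \eqref{e:D*u=H(u*n)} follows from (c).

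For (e), I take $\bphi=\varphi\bn+\bv$ with $\bv\in\bH^1_0(\Omega)$. The duality $\duality{\partial_\bn\bu,\bphi}$ only sees traces, and $\bphi=(\bphi\cdot\bn)\bn$ on $\pOmega$. So
\[
\duality{\partial_\bn\bu,\bphi}=\sum_i\duality{\partial_\bn u_i,(\bphi\cdot\bn)n_i}=\duality{\partial_\bn\bu\cdot\bn,\bphi\cdot\bn},
\]
which is justified first for smooth functions and then by density.
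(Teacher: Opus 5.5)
Your proposal is correct, and for (a), (c), (d), (e) it is essentially the paper's argument: componentwise reduction to Lemma~\ref{l:D(A;L2,H1)}, $\partial_\bn\bu\cdot\bn=\sum_i n_i\,\partial_\bn u_i$ defined by duality, and pointwise identities for smooth fields extended by density.

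The real difference is in (b). You use the same test function $\varphi=G\delta$ with $G-\Delta G=0$ as the paper. The paper, however, applies \eqref{e:duality(D(L;L2)} to $p=\nabla\cdot\bu$, $q=\varphi$, and uses $\Delta(\nabla\cdot\bu)=\nabla\cdot\Delta\bu$ and $\varphi=0$ on $\pOmega$ to get
\[
\duality{\nabla\cdot\bu,g}=\int_\Omega\left((\nabla\cdot\bu)\,\Delta\varphi+\nabla\varphi\cdot\Delta\bu\right)d\bx,
\qquad
\Delta\varphi=G(\delta+\Delta\delta)+2\nabla G\cdot\nabla\delta .
\]
Only $\Delta\varphi$ enters, never the full Hessian, and $\Delta G=G$ eliminates all second derivatives of $G$. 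So for smooth $g$, where $\varphi\in H^2(\Omega)$ and the formula is legitimate, the bound by $\|G\|_{H^1(\Omega)}=\|g\|_{H^{1/2}(\pOmega)}$ is immediate, and density in $g$ concludes.

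Your splitting $\Delta\bu=\nabla(\nabla\cdot\bu)+\nabla\cdot(\nabla\times\bu)$ produces exactly the same identity. The term you single out as the main obstacle vanishes: $(\nabla\times\bu):\nabla^2\varphi=0$ pointwise, since it is an antisymmetric matrix contracted with a symmetric one. So the weighted bound $\|\delta\nabla^2G\|_{L^2(\Omega)}\le C\|G\|_{H^1(\Omega)}$ is not needed. It is true (Caccioppoli on Whitney cubes applied to $\partial_kG$, which also solves $w-\Delta w=0$), and it even yields $\|G\delta\|_{H^2(\Omega)}\le C\|g\|_{H^{1/2}(\pOmega)}$ for every $g\in H^{1/2}(\pOmega)$. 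In particular, your concern that $\varphi\in H^2$ requires $g\in H^{3/2}$ is unfounded. Still, the paper's route is shorter and uses nothing beyond $\Delta G=G$.

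Two smaller remarks.

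In (c), take the paper's $\delta=-{\rm dist}(\cdot,\pOmega)$ with $\bn=\nabla\delta$ near $\pOmega$, as you do. Then the pointwise computation gives $\nabla_\bt\cdot\bn=\nabla\cdot\bn=\Delta\delta$, because $\bn\cdot D^2\delta\cdot\bn=\frac12\partial_\bn|\nabla\delta|^2=0$. Moreover, this $\delta$ is convex, not concave, for convex $\Omega$. The displayed $-\Delta\delta$ and your appeal to concavity correspond instead to $\delta=+{\rm dist}(\cdot,\pOmega)$, $\bn=-\nabla\delta$. That is what the paper's own computation ($n_i=-\partial_i\delta/|\nabla\delta|$) tacitly uses, so fix one convention throughout.

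In (d), like the paper, you pass to the limit along smooth elements of $\bH^1_\bn(\Omega)$. This requires approximants that stay in $\bH^1_\bn(\Omega)$ and converge in the $\bD(\Delta,\bH^1(\Omega))$ norm, which is not automatic. The paper's $\varphi\bn+\bv$ need not even satisfy $\Delta(\varphi\bn)\in\bL^2(\Omega)$ when $\bn$ is only Lipschitz. A simple way around it: for smooth $\bu$, $\nabla_\bt\cdot\bu_\bt$ is the surface divergence of the tangential trace. Hence it depends continuously on $\bu_\bt|_{\pOmega}\in\bH^{1/2}(\pOmega)$ alone, and it vanishes when $\bu_\bt=0$.
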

{\bf Proof}.
The proof of (a) is as in (a.1), Lemma \ref{l:D(A;L2,H1)}.
For (b), let $\bu\in\bcD(\ol{\Omega})$, $g\in\cD(\pOmega)$. 
As in (b.2), Lemma \ref{l:D(A;L2,H1)}, we consider the 
$H^1(\Omega)$ extension $g$, the distance function $\delta$ and set $\varphi=g\delta$. 
Applying \eqref{e:duality(D(L;L2)} with $p=\nabla\cdot\bu$ and $q=\varphi$, and after integrating by parts we get
\begin{align*}
\duality{\nabla\cdot\bu,g}
=
\duality{\nabla\cdot\bu,\partial_\bn \varphi}
&=
\int_\Omega 
((\nabla\cdot\bu)\Delta \varphi - \varphi\Delta(\nabla\cdot\bu))\, {d\bx}
\\
&=
\int_\Omega 
((\nabla\cdot\bu)(g(\delta+\Delta \delta) + 2(\nabla \delta\cdot\nabla g)) + 
(\nabla (g\delta)\cdot \Delta\bu)).
\end{align*}
This implies
\begin{align}
|\duality{\nabla\cdot\bu,g}|
\leq
C(\Omega)\|\bu\|_{\bD(\Delta,\bH^1(\Omega))}\|g\|_{H^1(\Omega)} 
\leq
C(\Omega)\|\bu\|_{\bD(\Delta,\bH^1(\Omega))}\|g\|_{H^{1/2}(\pOmega)},	\label{e:|<D*u,g>|}
\end{align}
which combined with the density of $\bcD(\ol{\Omega})$ in $\bD(\Delta,\bH^1(\Omega))$
and of $\cD(\ol{\Omega})$ in $H^1(\Omega)$ proves~(b)~and~\eqref{e:|divu|H-1/2}.

For the first part of (c), we apply (a.2), Lemma  \ref{l:D(A;L2,H1)} as follows. The map 
$u_i\in\cD(\ol{\Omega})\mapsto \partial_\bn u_i\in C^{0,1}(\partial\Omega)$,
$i=1,\ldots,d$, extends continuously from 
$D(\Delta,H^1(\Omega))$ to $H^{-1/2}(\partial\Omega)$ and
\begin{equation}\label{e:dnu.n}
\duality{\partial_\bn u_i,\varphi}
=
\int_\Omega (\varphi\Delta u_i + \nabla u_i\cdot\nabla \varphi)\,d\bx,
\quad\forall u_i\in D(\Delta,H^1(\Omega)),\;\; \varphi\in H^1(\Omega),\;\; i=1,\ldots,d.
\end{equation}
Now let $\bu=(u_1,\ldots,u_d)\in\bcD(\ol{\Omega})$.
For $g\in H^{1/2}(\pOmega)$ we consider its extension in $\Omega$ satisfying 
$g-\Delta g=0$ so that 
$\|g\|_{H^{1/2}(\pOmega)}=\|g\|_{H^1(\Omega)}$. 
Then \eqref{e:dnu.n} implies
\begin{align}
\duality{\partial_\bn\bu\cdot\bn,g}
=
\sum_{i=1}^d
\duality{\partial_\bn u_i,n_i g}
&=
\sum_{i=1}^d
\int_\Omega (n_i g\Delta u_i + \nabla u_i\cdot\nabla (n_i g))\,d\bx
\nonumber \\
&=
\int_\Omega 
(
g(\bn\cdot\Delta\bu) + \nabla\bu:\nabla\bn)) + 
\bn\cdot\nabla\bu\cdot\nabla g)\,d\bx;
\quad
\mbox{so }\\
|\duality{\partial_\bn\bu\cdot\bn, g}|
&\leq
C\|\bu\|_{\bD(\Delta, \bH^1(\Omega))}\|g\|_{H^1(\Omega)}
\nonumber\\
&\leq
C\|\bu\|_{\bD(\Delta, \bH^1(\Omega))}\|g\|_{H^{1/2}(\pOmega)}.	\label{e:|dnu.n|<,1}
\end{align}
By density, the estimate \eqref{e:|dnu.n|<,1} holds for 
$\bu\in\bD(\Delta,\bH^1(\Omega))$,
and proves (c) for the extension of the map $\bu\mapsto\partial_\bn\bu\cdot\bn$.
Furthermore, for $\bu\in\bcD(\ol{\Omega})$ we have
$\nabla\cdot\bu = \partial_\bn\bu\cdot\bn + \nabla_\bt\cdot\bu$, which combined
with (a), (b), \eqref{e:|<D*u,g>|} and  \eqref{e:|dnu.n|<,1} proves the claim (c) for the map 
$\bu\mapsto\nabla_\bt\cdot\bu$ and \eqref{e:div(u)=Du*n+Dt*u}.

For the remaining part of (c) we note that \eqref{e:Dt*u=Dt*ut+H(u*n)} holds for 
$\bu\in\bcD(\ol{\Omega})$ because we have
\begin{align*}
\nabla_\bt\cdot\bu
&= 
\nabla_\bt\cdot(\bu_\bt + (\bu\cdot\bn)\bn) 	
=
\nabla_\bt\cdot\bu_\bt + 
\nabla\cdot((\bu\cdot\bn)\bn) - \partial_\bn((\bu\cdot\bn)\bn)\cdot\bn 	\nonumber\\
&=
\nabla_\bt\cdot\bu_\bt +
(\nabla_\bt\cdot\bn)(\bu\cdot\bn).
\end{align*}
Then \eqref{e:Dt*u=Dt*ut+H(u*n)} follows from the density of $\bcD(\ol{\Omega})$ in 
$\bD(\Delta,\bH^1(\Omega))$.
The formula \eqref{e:Dt*n=H} is classical, see for example \cite{gilbarg+trudinger-2001}, 
and it follows as below
\begin{align*}
\nabla_\bt\cdot\bn
&=
\sum_{i=1}^d\partial_i n_i - \partial_\bn\bn\cdot\bn
=
\sum_{i=1}^d\partial_i\left(-\frac{\partial_i \delta}{|\nabla \delta|}\right) 
- 
\frac{1}{2}\partial_\bn(|\bn|^2)
\\
&=
-
\sum_{i=1}^d
\frac{\partial_{ii} \delta}{|\nabla \delta|}
+
\sum_{i,j=1}^d
\partial_i\delta\partial_j\delta\frac{\partial_{ij}\delta}{|\nabla \delta|^3}
=
-\Delta \delta + \bn\cdot D^2\delta\cdot\bn\;\; on\;\;\pOmega.
\end{align*}
Finally, for (d) we note that for $\bu=\varphi\bn +\bv$ with $\varphi\in\cD(\ol{\Omega})$ 
and $\bv\in\bcD({\Omega})$ we have 
\begin{align*}
\nabla_\bt\cdot\bu
=
\nabla\cdot\bu - \partial_\bn\bu\cdot\bn
&=
\nabla\cdot(\varphi\bn) 
-
\partial_\bn(\varphi\bn)\cdot\bn	
\\
&=
\partial_\bn\varphi + \varphi(\nabla\cdot\bn) 
- \partial_\bn\varphi - \varphi(\partial_\bn\bn\cdot\bn)
\\
&=
(\nabla_\bt\cdot\bn)(\bu\cdot\bn),
\end{align*}
which combined with \eqref{e:Dt*u=Dt*ut+H(u*n)}  shows that 
$\nabla_\bt\cdot\bu_\bt=0$ on $\partial\Omega$ for $\bu\in\bcD(\ol{\Omega})$,
and then by density~proves~(d) and \eqref{e:D*u=H(u*n)}.

Similarly for (e), we note that \eqref{e:<Dnu,phi>=<Dnu*n,phi*n>} holds for 
$\bu\in\bcD(\ol{\Omega})$ and $\bphi=\varphi\bn+\bv$, with $\varphi\in\cD(\ol{\Omega})$, 
$\bv\in\bcD(\Omega)$.
By density, it follows that it holds for 
$\bu\in\bD(\Delta,\bH^1(\Omega))$ and $\bphi\in\bH^1_\bn(\Omega)$.
\hfill$\Box$

\begin{lemma}\label{l:H1n, Poincare}
There exits $C_P$ depending only on $\Omega$~such~that 
\begin{equation}\label{e:H1bn,Poincare}
\|\bu\|^2_{\bL^2(\Omega)}
\leq 
C_P
\|\nabla \bu\|^2_{\bL^2(\Omega)},\quad \forall \bu\in \bH^1_\bn(\Omega),
\qquad\mbox{(Poincar\'e inequality)},
\end{equation}
where $|\nabla\bu|^2={\rm tr}(\nabla\bu\cdot{^t\nabla}\bu)$.
\end{lemma}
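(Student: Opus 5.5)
The Poincaré inequality on $\bH^1_\bn(\Omega)$ will be proved by the standard compactness--contradiction argument (Peetre--Tartar style). The only nontrivial ingredient is that a vector field with zero gradient which lies in $\bH^1_\bn(\Omega)$ must vanish.

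Suppose the inequality fails. Then there is a sequence $\bu_k\in\bH^1_\bn(\Omega)$ with $\|\bu_k\|_{\bL^2(\Omega)}=1$ and $\|\nabla\bu_k\|_{\bL^2(\Omega)}\le 1/k$. The sequence is bounded in $\bH^1(\Omega)$. Since $\Omega$ is a bounded Lipschitz (indeed $C^{1,1}$) domain, the Rellich--Kondrachov theorem lets us extract a subsequence with $\bu_k\to\bu$ strongly in $\bL^2(\Omega)$ and weakly in $\bH^1(\Omega)$. Because $\nabla\bu_k\to0$ in $\bL^2$, the sequence is in fact Cauchy in $\bH^1(\Omega)$, so $\bu_k\to\bu$ strongly in $\bH^1(\Omega)$, with $\nabla\bu=0$ and $\|\bu\|_{\bL^2(\Omega)}=1$. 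The space $\bH^1_\bn(\Omega)$ is closed in $\bH^1(\Omega)$, by the characterization \eqref{e:H1n(Om)~} as the set of $\bu$ with $\bu_\bt=0$ on $\pOmega$ and the continuity of the trace. Hence $\bu\in\bH^1_\bn(\Omega)$. Finally, $\Omega$ is connected (it is simply connected), so $\nabla\bu=0$ forces $\bu\equiv\bc$, a constant vector in $\R^d$.

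It remains to show that a constant field $\bc$ with $\bc_\bt=0$ on $\pOmega$ must vanish; this is the main step. The condition means $\bc=(\bc\cdot\bn)\bn$ a.e.\ on $\pOmega$, i.e.\ $\bn(\bx)$ is parallel to $\bc$ at almost every boundary point. I would rule this out for $\bc\neq0$ with the divergence theorem. For any fixed vector $\bm{e}\perp\bc$, we have $\int_{\pOmega}(\bm{e}\cdot\bx)\,\bn\cdot\bm{e}\,d\sigma=\int_\Omega\nabla\cdot\big((\bm{e}\cdot\bx)\bm{e}\big)\,d\bx=|\bm{e}|^2|\Omega|>0$. On the other hand, $\bn\cdot\bm{e}=0$ a.e.\ on $\pOmega$ because $\bn\parallel\bc\perp\bm{e}$, so the left side is zero. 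This is a contradiction. Such an $\bm{e}\neq0$ exists because $d\ge2$.

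Therefore $\bc=0$, which contradicts $\|\bu\|_{\bL^2(\Omega)}=1$, and the constant $C_P=C_P(\Omega)$ exists. The only points requiring care are the closedness of $\bH^1_\bn(\Omega)$ and the rigidity step above. An alternative to the divergence-theorem argument would be the geometric observation that a bounded domain's boundary cannot have normal everywhere parallel to a fixed direction, but the integral identity gives a clean proof.
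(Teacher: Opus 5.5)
Your proof is correct and follows the route the paper indicates: the paper only says the lemma ``is proved easily by contradiction'' and gives no details, and your argument is that contradiction proof written out in full (normalized sequence, Rellich compactness, strong $\bH^1(\Omega)$ limit in the closed subspace $\bH^1_\bn(\Omega)$, connectedness forcing a constant limit). The one step the paper leaves implicit, that no nonzero constant $\boldsymbol{c}$ satisfies $\boldsymbol{c}_\bt=0$ on $\pOmega$, is settled cleanly by your identity $\int_{\pOmega}(\boldsymbol{e}\cdot\bx)(\bn\cdot\boldsymbol{e})\,d\sigma=|\boldsymbol{e}|^2|\Omega|$ for $\boldsymbol{e}\perp\boldsymbol{c}$.
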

The lemma is proved easily by contradiction and do not present it here.
\\

\noindent
The result of the next lemma will be used in the lemma that follows it, where
the equivalence of two norms in $\bH^1_\bn(\Omega)$ is proved.
\begin{lemma}\label{l:div*div+curl*curl=grad*grad}
For every $\bu\in\bH^1_\bn(\Omega)$ we have
\begin{equation}\label{e:div^2+curl^2=grad*grad+H*(u*n)^2}
\int_\Omega 
\left(
|\nabla\cdot\bu|^2 + \frac{1}{2}|\nabla\times\bu|^2
\right)
=
\int_\Omega 
|\nabla\bu|^2
+
\int_{\pOmega}
(\nabla_\bt\cdot\bn)|\bu\cdot\bn|^2,
\end{equation}
where 
$|\nabla\bu|^2={\rm tr}(\nabla\bu\cdot{^t\nabla}\bu)$ and
$|\nabla\times\bu|^2={\rm tr}((\nabla\times\bu)\cdot{^t(}\nabla\times\bu))$.
\end{lemma}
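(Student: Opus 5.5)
We prove \eqref{e:div^2+curl^2=grad*grad+H*(u*n)^2} first for smooth fields $\bu=\varphi\bn+\bv$ with $\varphi\in\cD(\ol{\Omega})$ and $\bv\in\bcD(\Omega)$, and then pass to all of $\bH^1_\bn(\Omega)$ by density, using the first characterization in \eqref{e:H1n(Om)~}. Both sides of the identity are continuous in the $\bH^1(\Omega)$ norm. On the left this is clear. On the right the volume term is clear, and the boundary term is controlled because $\nabla_\bt\cdot\bn\in L^\infty(\pOmega)$ by \eqref{e:Dt*n=H} (recall $\delta\in C^{1,1}$) and the trace $\bu\cdot\bn\in H^{1/2}(\pOmega)\subset L^2(\pOmega)$ depends continuously on $\bu$. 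So it suffices to treat smooth $\bu$. Since $\bn$ is only Lipschitz, such $\bu$ lie in $\bH^1$ with $\bu\in W^{1,\infty}$. If needed, one could instead approximate by $\bH^2$ fields with $\bu_\bt=0$ and argue the same way.

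The algebraic core is the pointwise identity
\[
\tfrac12|\nabla\times\bu|^2=|\nabla\bu|^2-\nabla\bu:{^t\nabla}\bu,
\]
together with the classical formula
\[
\int_\Omega\big(|\nabla\cdot\bu|^2-\nabla\bu:{^t\nabla}\bu\big)
=\int_{\pOmega}\big((\nabla\cdot\bu)(\bu\cdot\bn)-(\partial_\bn\bu\cdot\bn)(\bu\cdot\bn)\big)\,d\sigma ,
\]
where we may replace $\partial_\bn\bu\cdot\bn$ by $\bu\cdot\nabla\bu\cdot\bn$ because $\bu$ is parallel to $\bn$ on $\pOmega$. This formula holds for smooth $\bu$ since
\[
\partial_j u_i\,\partial_i u_j-(\partial_i u_i)(\partial_j u_j)
=\partial_j\big(u_i\partial_i u_j\big)-\partial_i\big(u_i\partial_j u_j\big),
\]
which is a divergence. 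For $\bu\in C^2$ this is immediate, and for our Lipschitz-coefficient $\bu$ it follows by a standard mollification or approximation in $\bH^2\cap\{\bu_\bt=0\}$. Subtracting the two relations gives
\[
\int_\Omega\Big(|\nabla\cdot\bu|^2+\tfrac12|\nabla\times\bu|^2\Big)
=\int_\Omega|\nabla\bu|^2+\int_{\pOmega}(\bu\cdot\bn)\,\nabla_\bt\cdot\bu\,d\sigma ,
\]
using $\nabla_\bt\cdot\bu=\nabla\cdot\bu-\partial_\bn\bu\cdot\bn$.

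Finally, Lemma \ref{l:div(u),H1L2}(d), or directly the computation in its proof for $\bu=\varphi\bn+\bv$, gives $\nabla_\bt\cdot\bu=(\nabla_\bt\cdot\bn)(\bu\cdot\bn)$ on $\pOmega$. Substituting this into the boundary term produces exactly $\int_{\pOmega}(\nabla_\bt\cdot\bn)|\bu\cdot\bn|^2$, which is \eqref{e:div^2+curl^2=grad*grad+H*(u*n)^2}.

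The main obstacle is justifying the boundary integration by parts under the limited regularity: $\bn$ is only $C^{0,1}$, so $\bu$ is not $C^2$. The plan is to verify the divergence identity for $\bu\in\bH^2(\Omega)$ with $\bu_\bt=0$, where everything is an honest trace in $L^2(\pOmega)$. Such fields are dense in $\bH^1_\bn(\Omega)$; to see this, take $\varphi$ smooth and approximate $\bn$ by $C^{1,1}$-based $\nabla\delta/|\nabla\delta|$ near $\pOmega$, which is $C^{0,1}$ and gives $W^{1,\infty}\subset$ enough regularity for the divergence theorem. One then concludes by the continuity argument above.
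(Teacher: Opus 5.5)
Your algebraic step $\tfrac12|\nabla\times\bu|^2=|\nabla\bu|^2-{\rm tr}(\nabla\bu\cdot\nabla\bu)$ is exactly the paper's. The difference is that the paper simply quotes Grisvard's identity \eqref{e:grisvard,Thm3111}, while you rederive it. You use the divergence structure ${\rm tr}(\nabla\bu\cdot\nabla\bu)-(\nabla\cdot\bu)^2=\nabla\cdot F$ with $F=(\bu\cdot\nabla)\bu-(\nabla\cdot\bu)\bu$, together with $\nabla_\bt\cdot\bu=(\nabla_\bt\cdot\bn)(\bu\cdot\bn)$. That derivation is correct whenever $\nabla\bu$ has a boundary trace, e.g.\ $\bu\in\bH^2(\Omega)$, or $\pOmega$ of class $C^2$.

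The gap is the step you yourself call the main obstacle: it is not closed. Take $\bu=\varphi\bn+\bv$ with $\bn$ only Lipschitz. Near $\pOmega$ one has $F=-\varphi^2(\Delta\delta)\nabla\delta$, where $\Delta\delta$ is merely in $L^\infty$. So the boundary term needs a trace of $\nabla\bn$ (the curvature), which an $L^\infty$ function does not have. ``Standard mollification'' does not supply one, because boundary traces of $\nabla\bu$ are not continuous under $\bH^1$ or weak-$*$ $W^{1,\infty}$ convergence. The same objection applies to reading off $\nabla_\bt\cdot\bu$ on $\pOmega$ from Lemma \ref{l:div(u),H1L2}(d): its hypothesis $\Delta\bu\in\bL^2(\Omega)$ generally fails for $\varphi\bn$.

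Your fallback is density of $\{\bu\in\bH^2(\Omega):\ \bu_\bt=0\}$ in $\bH^1_\bn(\Omega)$. You justify it by ``approximating'' $\bn$ by $\nabla\delta/|\nabla\delta|$, but that is the same Lipschitz field and produces no $\bH^2$ functions. Worse, the density can fail on $C^{1,1}$ domains. In $d=2$, suppose $\bu\in\bH^2(\Omega)$ and $\bu=g\bn$ on $\pOmega$. Then $g\bn\in H^{3/2}(\pOmega)$, and differentiating in arclength gives $(g\bn)'\cdot\bt=g\kappa\in H^{1/2}(\pOmega)$, with $\kappa$ the curvature. Since $g$ is continuous, it must vanish at every jump of $\kappa$. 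Hence a $C^{1,1}$ boundary whose curvature jumps upward on a dense set forces $g\equiv0$, i.e.\ all such fields lie in $\bH^1_0(\Omega)$, which is not dense in $\bH^1_\bn(\Omega)$.

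A repair that stays within your approach is to work directly with $\bu=\varphi\nabla\delta$ near $\pOmega$. Test $\nabla\cdot F\in L^\infty(\Omega)$ against the Lipschitz cutoff $\min\{1,-\delta/\epsilon\}$, which vanishes on $\pOmega$. Integrating by parts gives $\int_\Omega(\nabla\cdot F)\min\{1,-\delta/\epsilon\}\,d\bx=-\epsilon^{-1}\int_{\{-\epsilon<\delta<0\}}\varphi^2\Delta\delta\,d\bx$. Then let $\epsilon\to0$ in tubular coordinates. There $\Delta\delta(y-t\bn(y))=\sum_i\kappa_i(y)/(1-t\kappa_i(y))\to(\nabla_\bt\cdot\bn)(y)$ boundedly for a.e.\ $y\in\pOmega$, since the principal curvatures of a $C^{1,1}$ boundary exist a.e. Alternatively, approximate $\Omega$ by smooth domains, or, as the paper does, take Grisvard's identity as given.
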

{\bf Proof}.
In \cite[Theorem 3.1.1.1]{grisvard-1985} is proved  the following equality
\begin{equation}\label{e:grisvard,Thm3111}
\int_\Omega 
|\nabla\cdot\bu|^2 - {\rm tr}(\nabla\bu\cdot{\nabla}\bu)
=
\int_{\pOmega}
(\nabla_\bt\cdot\bn)|\bu\cdot\bn|^2,\quad \forall\bu\in\bH^1_\bn(\Omega).
\end{equation} 
We note that
\begin{align*}
|\nabla\bu|^2 - {\rm tr}(\nabla\bu\cdot{\nabla}\bu) 
&=
{\rm tr}(\nabla\bu\cdot{^t\nabla}\bu) - {\rm tr}(\nabla\bu\cdot{\nabla}\bu) 
\nonumber\\
&=
{\rm tr}((\nabla\bu-{^t\nabla}\bu)\cdot({^t\nabla}\bu -{\nabla}\bu))
+
{\rm tr}({^t\nabla}\bu\cdot({^t\nabla}\bu -{\nabla}\bu))
\nonumber\\
&=
|\nabla\times\bu|^2
+
{\rm tr}({^t\nabla}\bu\cdot{^t\nabla}\bu)
-
{\rm tr}({^t\nabla}\bu\cdot{\nabla}\bu)
\nonumber\\
&=
|\nabla\times\bu|^2
+
{\rm tr}({\nabla}\bu\cdot{\nabla}\bu)
-
|\nabla\bu|^2;\quad\mbox{hence}
\nonumber\\
- {\rm tr}(\nabla\bu\cdot{\nabla}\bu) 
&=
\frac{1}{2}|\nabla\times\bu|^2  - |\nabla\bu|^2 ,
\end{align*}
which combined with \eqref{e:grisvard,Thm3111}  proves \eqref{e:div^2+curl^2=grad*grad+H*(u*n)^2}.
\hfill$\Box$
\\

\begin{lemma}\label{l:|.|H1am~|.|H1}
Let $\alpha\geq0$, $\mu>0$ and define $\|\bu\|_{\bH^1_{\alpha,\mu}(\Omega)}$ by
\begin{equation}\label{e:H1amu}
\|\bu\|^2_{\bH^1_{\alpha,\mu}(\Omega)}
:=
\int_\Omega 
(\alpha|\bu|^2 +\mu|\nabla\bu|^2){d\bx}
+
\mu \int_{\pOmega}(\nabla_\bt\cdot\bn)(\bu\cdot\bn)^2 d{\boldsymbol\sigma}.
\end{equation}
Then 
\begin{align}
\|\bu\|^2_{\bH^1_{\alpha,\mu}(\Omega)}
&=
\int_\Omega 
\left(
\alpha|\bu|^2 +
\mu\left(|\nabla\cdot\bu|^2 + \frac{1}{2}|\nabla\times\bu|^2\right)
\right) {d\bx},
\label{e:H1am=H(u,div,rot)}	
\end{align}
and it is a norm in $\bH^1_\bn(\Omega)$ equivalent to the 
$\bH^1(\Omega)$ norm, i.e.
there exist $c_{\alpha,\mu}$, $C_{\alpha,\mu}$ positive constants depending only on $\alpha$, $\mu$ and $\Omega$ such that
\begin{equation}\label{e:|.|H1amu~|.|H1}
c_{\alpha,\mu}\|\bu\|^2_{\bH^1_{\alpha,\mu}(\Omega)}
\leq
\|\bu\|^2_{\bH^1(\Omega)}
\leq
C_{\alpha,\mu}\|\bu\|^2_{\bH^1_{\alpha,\mu}(\Omega)},\quad 
\forall \bu\in \bH^1_\bn(\Omega).
\end{equation}
\end{lemma}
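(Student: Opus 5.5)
The identity \eqref{e:H1am=H(u,div,rot)} follows at once from Lemma \ref{l:div*div+curl*curl=grad*grad}. Multiply \eqref{e:div^2+curl^2=grad*grad+H*(u*n)^2} by $\mu$ and add $\alpha\int_\Omega|\bu|^2$ to both sides. The right-hand side then becomes exactly the definition \eqref{e:H1amu}, and the left-hand side becomes the claimed expression. A first consequence is that $\|\bu\|^2_{\bH^1_{\alpha,\mu}(\Omega)}\ge 0$, even though the boundary term $\int_{\pOmega}(\nabla_\bt\cdot\bn)(\bu\cdot\bn)^2$ may be negative for nonconvex $\Omega$. The functional is a quadratic form coming from a symmetric bilinear form. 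So once the two-sided estimate \eqref{e:|.|H1amu~|.|H1} is proved, it is a norm induced by an inner product.

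For the left inequality in \eqref{e:|.|H1amu~|.|H1} I bound each term by $\|\bu\|^2_{\bH^1(\Omega)}$. The volume terms are immediate. For the boundary term, $\nabla_\bt\cdot\bn\in L^\infty(\pOmega)$ by \eqref{e:Dt*n=H}, since $\delta\in C^{1,1}$. The trace theorem $H^1(\Omega)\to L^2(\pOmega)$ then gives $\big|\int_{\pOmega}(\nabla_\bt\cdot\bn)(\bu\cdot\bn)^2\big|\le C(\Omega)\|\bu\|^2_{\bH^1(\Omega)}$. This yields $c_{\alpha,\mu}$ depending on $\alpha,\mu,\Omega$.

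The right inequality is the main obstacle, because the boundary term has no sign in general. I would argue by contradiction, using the form \eqref{e:H1am=H(u,div,rot)}. It suffices to treat $\alpha=0$, since the $\alpha$ term only adds a nonnegative contribution. The claim is then $\|\bu\|^2_{\bH^1}\le C\int_\Omega(|\nabla\cdot\bu|^2+\frac12|\nabla\times\bu|^2)$ for $\bu\in\bH^1_\bn(\Omega)$.

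Suppose it fails. Then there are $\bu_k\in\bH^1_\bn(\Omega)$ with $\|\bu_k\|_{\bH^1}=1$ and $\|\nabla\cdot\bu_k\|_{L^2}+\|\nabla\times\bu_k\|_{L^2}\to0$. Passing to a subsequence, $\bu_k\rightharpoonup\bu$ weakly in $\bH^1$. By Rellich and compactness of the trace, $\bu_k\to\bu$ strongly in $\bL^2(\Omega)$ and $\bu_k\cdot\bn\to\bu\cdot\bn$ in $L^2(\pOmega)$. Now apply the identity of Lemma \ref{l:div*div+curl*curl=grad*grad} to $\bu_k-\bu_l$:
\[
\|\nabla(\bu_k-\bu_l)\|^2_{\bL^2}\le \|\nabla\cdot(\bu_k-\bu_l)\|^2+\tfrac12\|\nabla\times(\bu_k-\bu_l)\|^2+C\|(\bu_k-\bu_l)\cdot\bn\|^2_{L^2(\pOmega)}.
\]
The right side tends to $0$, so $(\bu_k)$ is Cauchy in $\bH^1$. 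Hence $\bu_k\to\bu$ strongly, with $\|\bu\|_{\bH^1}=1$ and $\bu\in\bH^1_\bn(\Omega)$ (closed subspace). The limit also satisfies $\nabla\cdot\bu=0$ and $\nabla\times\bu=0$.

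Since $\Omega$ is simply connected, $\nabla\times\bu=0$ gives $\bu=\nabla\phi$ with $\phi$ harmonic, using \cite{girault+raviart-1986}. The condition $\bu_\bt=0$ means the tangential gradient of $\phi$ vanishes on $\pOmega$, so $\phi$ is constant on $\pOmega$; here I use that $\pOmega$ is connected, which I take from the simply-connectedness/$C^{1,1}$ hypothesis as in the paper's setting. A harmonic function constant on the boundary is constant, so $\bu=0$. This contradicts $\|\bu\|_{\bH^1}=1$.

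The delicate points are exactly these two: justifying $\bu=\nabla\phi$ and the boundary connectedness used to get $\phi$ constant. If connectedness of $\pOmega$ is unavailable, I would instead derive $\bu=0$ from $\Delta\bu=0$ with $\bu_\bt=0$ and $\nabla\cdot\bu=0$ on $\pOmega$, but I would first try the potential argument.
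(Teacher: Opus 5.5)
\textbf{Gap.} The step you flag as delicate does not follow from the hypotheses, and it cannot be closed, because the statement fails there. Connectedness of $\pOmega$ is not a consequence of simple connectedness when $d=3$. For $d=2$ it is: a bounded simply connected planar $C^{1,1}$ domain is bounded by a single closed curve, so your proof is complete in that case.

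For $d=3$, take the shell $\Omega=\{1<|\bx|<2\}\subset\R^3$, which is simply connected and smooth, and let $\bu=\bx/|\bx|^3=-\nabla(1/|\bx|)$. This field is smooth on $\ol{\Omega}$ and normal to both spheres, so $\bu\in\bH^1_\bn(\Omega)$. Moreover $\nabla\cdot\bu=0$, and $\nabla\times\bu=0$ because $\nabla\bu$ is a Hessian. By \eqref{e:H1am=H(u,div,rot)} with $\alpha=0$ you get $\|\bu\|_{\bH^1_{0,\mu}(\Omega)}=0$ although $\bu\neq0$. Directly: $\int_\Omega|\nabla\bu|^2=7\pi$ is cancelled by the boundary term $\pi-8\pi$ in \eqref{e:H1amu}.

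So for $\alpha=0$ on this domain, both the norm property and the right inequality in \eqref{e:|.|H1amu~|.|H1} are false. Your fallback cannot rescue it either, since this $\bu$ is harmonic with $\bu_\bt=0$ and $\nabla\cdot\bu=0$ on $\pOmega$. For fields with vanishing tangential trace, the obstruction is the number of components of $\pOmega$, not simple connectedness of $\Omega$. The honest repair is to add the hypothesis that $\pOmega$ is connected; with it, your argument is complete. The paper's proof has the same hidden issue: it cites von Wahl's estimate \eqref{e:wahl} on $\bH^1_\bn(\Omega)$, and that estimate fails for the shell.

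\textbf{Comparison with the paper.} Apart from this, your route is sound and differs from the paper's.
\begin{itemize}
\item For the left inequality, the paper uses \eqref{e:H1am=H(u,div,rot)} with the pointwise bound $|\nabla\cdot\bu|^2+\frac12|\nabla\times\bu|^2\le C|\nabla\bu|^2$. You use the trace theorem and $\nabla_\bt\cdot\bn\in L^\infty(\pOmega)$ instead; both are fine.
\item For the right inequality, the paper simply combines the cited estimate \eqref{e:wahl} with the Poincar\'e inequality \eqref{e:H1bn,Poincare}. This gives an explicit constant $(1+C_P)\mu^{-1}C_{div,curl}$, independent of $\alpha$.
\item Your compactness argument upgrades weak to strong convergence through the identity \eqref{e:div^2+curl^2=grad*grad+H*(u*n)^2}. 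It proves both cited ingredients at once from the paper's own tools and shows exactly which topological hypothesis is needed. The cost is a non-explicit constant.
\end{itemize}
One further point: do not reduce to $\alpha=0$ when $\alpha>0$. If you keep $\alpha\|\bu_k\|^2_{\bL^2(\Omega)}\to0$ in your contradiction sequence, the strong limit is forced to be $0$. So for $\alpha>0$ your argument gives the right inequality with no topological assumption at all. The resulting constant must blow up as $\alpha\to0$ on the shell.
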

{\bf Proof}.
The equality \eqref{e:H1am=H(u,div,rot)} follows from \eqref{e:H1amu} and 
\eqref{e:div^2+curl^2=grad*grad+H*(u*n)^2}.
Then from \eqref{e:H1am=H(u,div,rot)} we get
\begin{align}
\|\bu\|^2_{\bH^1_{\alpha,\mu}(\Omega)}
&\leq
\int_\Omega 
\left(\alpha|\bu|^2 + \mu C|\nabla\bu|^2\right){d\bx}
\leq
c_{\alpha,\mu}^{-1}
\|\bu\|^2_{\bH^1(\Omega)}\quad \mbox{with\ }\;\; c_{\alpha,\mu}^{-1}=\max\{\alpha,\mu C\},		\nonumber
\end{align}
which proves the left side of \eqref{e:|.|H1amu~|.|H1}.
Now we prove the right side of \eqref{e:|.|H1amu~|.|H1}.
From \cite{wahl-1992} we have
\begin{equation}\label{e:wahl}
\|\nabla\bu\|^2_{\bL^2(\Omega)}
\leq
C_{div,curl}
\left(
\|\nabla\cdot\bu\|^2_{\bL^2(\Omega)} + \frac{1}{2}\|\nabla\times\bu\|^2_{\bL^2(\Omega)}
\right),\quad \forall\bu\in\bH^1_\bn(\Omega),
\end{equation}
where $C_{div,curl}$ is a positive constant independent of $\bu$. Then
\begin{align*}
\|\bu\|^2_{\bH^1(\Omega)}
&=
\|\bu\|^2_{\bL^2(\Omega)}
+
\|\nabla\bu\|^2_{\bL^2(\Omega)}	\qquad\hfill&\mbox{(use  \eqref{e:H1bn,Poincare})}
\\
&\leq
(1+C_P)\|\nabla\bu\|^2_{\bL^2(\Omega)}\qquad&\mbox{(use \eqref{e:wahl})}
\\
&\leq
(1+C_P)C_{div,curl}
\left(
\|\nabla\cdot\bu\|^2_{\bL^2(\Omega)} + \frac{1}{2}\|\nabla\times\bu\|^2_{\bL^2(\Omega)}
\right)\qquad&%
\\
&\leq
(1+C_P)\mu^{-1}C_{div,curl}
\left(
\alpha\|\bu\|^2_{\bL^2(\Omega)}
+
\|\nabla\cdot\bu\|^2_{\bL^2(\Omega)} + \frac{1}{2}\|\nabla\times\bu\|^2_{\bL^2(\Omega)}
\right)\qquad& \mbox{(use   \eqref{e:H1am=H(u,div,rot)})}
\\
&=
C_{\alpha,\mu}\|\bu\|^2_{\bH^1_{\alpha.\mu}(\Omega)},
\quad\mbox{with }\; C_{\alpha,\mu}=(1+C_P)\mu^{-1}C_{div,curl},
\end{align*}
which completes the proof.
\hfill$\Box$

\begin{lemma}\label{l:Dq}
The map 
$p\in\cD(\overline{\Omega})
\mapsto 
(\nabla p,p)\in\bcD(\overline{\Omega}))\times\cD(\partial\Omega)$ extends 
continuously from $D(\Delta,L^2(\Omega))$ to 
$\bH^{-1}_\bn(\Omega)\times H^{-1/2}(\partial\Omega)$.
In particular, for every $p\in D(\Delta,L^2(\Omega))$ and $\bv\in\bH^1_\bn(\Omega)$ we have
\begin{equation}\label{e:<Dp,v>}
\duality{\nabla p,\bv}
=\duality{p,\bv\cdot{\bf n}}
- \int_\Omega p(\nabla\cdot \bv){d\bx}.
\end{equation}
\end{lemma}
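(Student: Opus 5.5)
The plan is to establish \eqref{e:<Dp,v>} first for smooth functions and then pass to the limit by density, using the continuity of the trace map $p\mapsto p|_{\pOmega}$ from $D(\Delta,L^2(\Omega))$ into $H^{-1/2}(\pOmega)$ given in (b.2) of Lemma \ref{l:D(A;L2,H1)}.

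For $p\in\cD(\ol{\Omega})$ and $\bv=\varphi\bn+\bw$ with $\varphi\in\cD(\ol{\Omega})$ and $\bw\in\bcD(\Omega)$, the divergence theorem gives
\begin{equation*}
\int_\Omega \nabla p\cdot\bv\,d\bx
=\int_{\pOmega} p\,(\bv\cdot\bn)\,d\sigma-\int_\Omega p\,(\nabla\cdot\bv)\,d\bx .
\end{equation*}
By \eqref{e:H1n(Om)~}, such $\bv$ are dense in $\bH^1_\bn(\Omega)$. The identity then extends to all $\bv\in\bH^1_\bn(\Omega)$ because both sides are continuous in $\bv$ for the $\bH^1$ norm: the trace $\bv\cdot\bn$ lies in $H^{1/2}(\pOmega)$ continuously, since $\bn$ is Lipschitz, and $\nabla\cdot\bv\in L^2(\Omega)$.

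Next comes the key estimate. For smooth $p$, define $\duality{\nabla p,\bv}$ by the right-hand side of \eqref{e:<Dp,v>}. Then
\begin{equation*}
|\duality{\nabla p,\bv}|\le \|p\|_{H^{-1/2}(\pOmega)}\|\bv\cdot\bn\|_{H^{1/2}(\pOmega)}+\|p\|_{L^2(\Omega)}\|\nabla\cdot\bv\|_{L^2(\Omega)}
\le C\|p\|_{D(\Delta,L^2(\Omega))}\|\bv\|_{\bH^1(\Omega)},
\end{equation*}
where the second inequality uses \eqref{e:|q|H-1/2<C|q|L^2(Delta)}. Hence $p\mapsto\nabla p$ is bounded from $(\cD(\ol{\Omega}),\|\cdot\|_{D(\Delta,L^2(\Omega))})$ into $\bH^{-1}_\bn(\Omega)$, and by (b.2) the same holds for $p\mapsto p|_{\pOmega}$ into $H^{-1/2}(\pOmega)$. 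By the density of $\cD(\ol{\Omega})$ in $D(\Delta,L^2(\Omega))$, from (b.1), both maps extend uniquely by continuity.

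It remains to pass to the limit in \eqref{e:<Dp,v>}. Take $p_k\to p$ in $D(\Delta,L^2(\Omega))$. Each term converges: the left side by the continuity just established, the boundary pairing through the $H^{-1/2}$ convergence of the traces, and the volume integral through $L^2$ convergence. We also check that the extension agrees with the distributional gradient on $\bcD(\Omega)\subset\bH^1_\bn(\Omega)$, which is immediate since the boundary term vanishes there.

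The only delicate point is bounding the boundary term, and it is handled entirely by \eqref{e:|q|H-1/2<C|q|L^2(Delta)}. The rest is routine density.
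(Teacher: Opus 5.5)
Your proposal is correct and follows essentially the same route as the paper: Green's formula for smooth $p$, the bound $|\duality{\nabla p,\bv}|\le C\|p\|_{D(\Delta,L^2(\Omega))}\|\bv\|_{\bH^1(\Omega)}$ obtained from \eqref{e:|q|H-1/2<C|q|L^2(Delta)} and the trace of $\bv\cdot\bn$, and then density of $\cD(\ol{\Omega})$ in $D(\Delta,L^2(\Omega))$. Your extra density step in $\bv$ is harmless but unnecessary, since Green's formula already holds for smooth $p$ and any $\bv\in\bH^1(\Omega)$.
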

{\bf Proof}.
The claim for the extension of the map 
$p\in\cD(\overline{\Omega})
\mapsto 
p\in\cD(\partial\Omega)$ follows from (b.1), Lemma \ref{l:D(A;L2,H1)}.
For the claim for $\nabla p$ we note that for $p\in\cD(\overline{\Omega})$, 
$\bv\in\bH^1_\bn(\Omega)$ we have
\begin{subequations}\label{e:<Dq,v>~}
\begin{align*}
\duality{\nabla p,\bv}
&=
\duality{p,\bv\cdot{\bf n}}
- \int_\Omega p(\nabla\cdot \bv){d\bx},
\\
|\duality{\nabla p,\bv}|
&\leq 
C
\|p\|_{D(\Delta,L^2(\Omega))}
\|\bv\|_{\bH^1(\Omega)},
\end{align*}
\end{subequations}
which combined with the density of $\cD(\ol{\Omega})$ in $D(\Delta,L^2(\Omega))$ proves the claim for $\nabla p$ and \eqref{e:<Dp,v>}.
\hfill$\Box$

\noindent
\begin{lemma}\label{l:collection}
We have :
\begin{enumerate}
\item[(a)]
For every $\bu\in \bH^1(\Omega)$ there exits 
$\bu_h\in \bH^1(\Omega)$ satisfying
\begin{subequations}\label{e:uqh}
\begin{align}
&\bu_h-\Delta\bu_h=0\;\; in\;\;\Omega,\quad
\bu_h=\bu\;\; on\;\; \partial\Omega,\quad\mbox{and}
\\
&
\|\bu\|_{\bH^{1/2}(\partial\Omega)}
=\|\bu_h\|_{\bH^1(\Omega)}
=\inf\{\|\bphi\|_{\bH^1(\Omega)},\quad \bphi\in\bH^1(\Omega),\;\; \bphi=\bu\;\;\mbox{on}\;\;\pOmega\}.
\end{align}
\end{subequations}
\item[(b)]
There exist $c_\bn$ and $C_\bn>0$, positive numbers,  such that 
\begin{equation}\label{e:uqh-H1/2~}
c_\bn \|\bu\|^2_{H^{1/2}(\partial\Omega)}
\leq 
\|\bu\cdot\bn\|^2_{H^{1/2}(\partial\Omega)}
\leq C_\bn \|\bu\|^2_{H^{1/2}(\partial\Omega)},
\quad\forall \bu\in\bH^1_\bn(\Omega).
\end{equation}
\item[(c)]
There exists $C_{div}>0$ such that for every  $q\in L^2(\Omega)$ with
$\int_\Omega q\,d\bx=0$, there exists  
$\bu_{div}(q)\in\bH^1_0(\Omega)$ such that
\begin{equation}\label{e:div(v)=q,v=g}
\nabla\cdot \bu_{div}(q)=q\;\; in\;\;\Omega,
\quad
\|\bu_{div}(q)\|^2_{\bH^1(\Omega)}\leq 
C_{div}
\|q\|^2_{L^2(\Omega)}.
\end{equation}
\end{enumerate}
\end{lemma}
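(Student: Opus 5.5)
The plan is to prove the three parts of Lemma \ref{l:collection} separately, since each reduces to a classical fact.

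For (a), I will use a Dirichlet-principle argument. Given $\bu\in\bH^1(\Omega)$, I minimize $\|\bphi\|^2_{\bH^1(\Omega)}$ over the closed affine subspace $\bu+\bH^1_0(\Omega)$. This set is nonempty, closed and convex, so by the projection theorem there is a unique minimizer $\bu_h$. It is characterized by $(\bu_h,\bv)_{\bH^1(\Omega)}=0$ for all $\bv\in\bH^1_0(\Omega)$, which is exactly the weak form of $\bu_h-\Delta\bu_h=0$ with $\bu_h=\bu$ on $\pOmega$. The equality of $\|\bu\|_{\bH^{1/2}(\pOmega)}$ with this infimum is then the componentwise definition \eqref{e:|q|H1/2=inf} of the trace norm.

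For (b), write $\bu\in\bH^1_\bn(\Omega)$ as $\bu=\varphi\bn+\bv$, so that $\bu=(\bu\cdot\bn)\bn$ on $\pOmega$.
\begin{itemize}
\item For the upper bound, let $U$ be the minimal extension from (a). Then $U\cdot\bn$ is an $H^1(\Omega)$ extension of $\bu\cdot\bn$, because $\bn\in C^{0,1}(\ol\Omega)$ is a multiplier on $H^1(\Omega)$. Hence $\|\bu\cdot\bn\|_{H^{1/2}(\pOmega)}\le\|U\cdot\bn\|_{H^1(\Omega)}\le C\|U\|_{\bH^1(\Omega)}=C\|\bu\|_{\bH^{1/2}(\pOmega)}$.
\item For the lower bound, let $P_h$ be the harmonic-type extension of $\bu\cdot\bn$ from \eqref{e:|q|H1/2=inf}. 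Then $P_h\bn$ is an $\bH^1(\Omega)$ extension of $\bu$ on $\pOmega$. Therefore $\|\bu\|_{\bH^{1/2}(\pOmega)}\le\|P_h\bn\|_{\bH^1(\Omega)}\le C\|\bu\cdot\bn\|_{H^{1/2}(\pOmega)}$.
\end{itemize}
Both constants depend only on $\|\bn\|_{C^{0,1}(\ol\Omega)}$ and $\Omega$.

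For (c), this is the classical Bogovskii/Ne\v{c}as result: for a bounded Lipschitz domain, $\nabla\cdot:\bH^1_0(\Omega)\to L^2_0(\Omega)$ is onto with a bounded right inverse. I would cite \cite[Corollary 2.4]{girault+raviart-1986} rather than reprove it. If a self-contained argument were wanted, I would proceed as follows.
\begin{itemize}
\item Use Ne\v{c}as' inequality $\|q\|_{L^2}\le C\|\nabla q\|_{\bH^{-1}}$ for mean-zero $q$. This gives that $\nabla:L^2_0\to\bH^{-1}$ is injective with closed range.
\item By the closed range theorem, its adjoint $-\nabla\cdot$ is onto $L^2_0$.
\item Take $\bu_{div}(q)$ to be the minimal-norm preimage, which lies in the orthogonal complement of the divergence-free subspace of $\bH^1_0(\Omega)$. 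This yields the bound $C_{div}$.
\end{itemize}

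The main obstacle is (c), specifically Ne\v{c}as' inequality, which is deep. Parts (a) and (b) are routine once one notes that $\bn$ is a Lipschitz multiplier and that $|\bn|=1$ on $\pOmega$.
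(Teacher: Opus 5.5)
Your proposal is correct and follows essentially the same route as the paper. The paper cites (a) and (c) as classical results (Girault--Raviart for (a), and Temam's Lemma 2.4 for (c), which is the same Bogovskii/Ne\v{c}as right inverse of the divergence you cite), and it justifies (b) in one line by the minimal-extension definition of the $H^{1/2}$ norm and the $C^{0,1}$ regularity of $\bn$. Your extensions $U\cdot\bn$ and $P_h\bn$, the latter using $\bu=(\bu\cdot\bn)\bn$ on $\pOmega$ for $\bu\in\bH^1_\bn(\Omega)$, supply exactly the details that line leaves implicit.
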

{\bf Proof}.
For the claim (a) we note that \eqref{e:uqh} is classical, see \cite{girault+raviart-1986}, while \eqref{e:uqh-H1/2~}
follows easily by using the definition of the $H^{1/2}$ norm and the 
$C^{0,1}$ regularity  of $\bn$.
For the claim (c), see \cite[Lemma 2.4]{temam-2001}.~\hfill$\Box$

\begin{lemma}\label{l:R,R-1}
Let $D:H^{1/2}_0(\pOmega)\mapsto H^{1}(\Omega)$ and
    $N:H^{-1/2}_0(\pOmega)\mapsto H^{1}(\Omega)$ be defined by 
\begin{subequations}\label{e:D,N}
\begin{align}
-\Delta D(u) &= 0\;\; \mbox{in}\;\; \Omega,\quad D(u)=u\;\; \mbox{on}\;\; \pOmega,
\label{e:D}\\
-\Delta N(p) &= 0\;\; \mbox{in}\;\; \Omega,\quad \partial_\bn N(p)=p\;\; \mbox{on}\;\; \pOmega,
\label{e:N}
\end{align}
\end{subequations}
Let also $\cR:H^{-1/2}_0(\pOmega)\mapsto H^{1/2}_0(\Omega)$ be the Riesz isomorphism
so that $\duality{p,u}=(\cR(p),u)_{H^{1/2}_0(\pOmega)}$ for
$p\in H^{-1/2}_0(\pOmega)$, $u\in H^{1/2}_0(\pOmega)$, see \eqref{e:Riesz}.
Then  
\begin{subequations}\label{e:R=N,R-1=pnD}
\begin{align}
\cR^{-1}(u)&=\partial_\bn D(u),\mbox{\;\; so}\quad
\duality{p,u}
=
(p,\cR^{-1}(u))_{H^{-1/2}_0(\pOmega)}
=(p,\partial_\bn D(u))_{H^{-1/2}_0(\pOmega)},
\\
\cR(p)&=N(p),\mbox{\;\;\;\;\; so}\quad
\duality{p,u}
=
(\cR(p),u)_{H^{1/2}_0(\pOmega)}
=(N(p),u)_{H^{1/2}_0(\pOmega)},
\end{align}
\end{subequations}
for all $p\in H^{-1/2}_0(\pOmega)$, $u\in H^{1/2}_0(\pOmega)$.
\end{lemma}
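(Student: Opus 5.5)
Our plan is to identify both Riesz maps through the variational characterization of the $H^{1/2}_0(\pOmega)$ inner product, rather than through abstract arguments. The first task is to fix which inner product on $H^{1/2}_0(\pOmega)$ makes the statement true. Since $H^{1/2}_0(\pOmega)$ is the zero-mean subspace, we will use the quotient-type inner product
\[
(u,v)_{H^{1/2}_0(\pOmega)}=\int_\Omega \nabla D(u)\cdot\nabla D(v)\,d\bx .
\]
This is a genuine inner product equivalent to the $H^{1/2}$ norm on zero-mean traces: a harmonic function with zero Dirichlet energy is constant, and a zero-mean trace forces that constant to vanish. Equivalence with \eqref{e:|q|H1/2=inf} follows from a Poincar\'e-type argument on harmonic extensions of zero-mean data. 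The dual $H^{-1/2}_0(\pOmega)$ carries the induced dual inner product.

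With this choice, the identity $\cR(p)=N(p)$ comes directly from Green's formula \eqref{e:duality(D(L;H1)}. For $p\in H^{-1/2}_0(\pOmega)$, the Neumann problem \eqref{e:N} is solvable because $\duality{p,1}=0$, which is the compatibility condition; this is consistent with Lemma \ref{l:D(A;L2,H1)}(a.2). We normalize $N(p)$ to have zero boundary mean. Then for $u\in H^{1/2}_0(\pOmega)$ we test \eqref{e:duality(D(L;H1)} with $q=D(u)$, using $\Delta N(p)=0$:
\[
\duality{p,u}=\duality{\partial_\bn N(p),D(u)}=\int_\Omega\nabla N(p)\cdot\nabla D(u)\,d\bx .
\]
The trace of $N(p)$ lies in $H^{1/2}_0(\pOmega)$, and its harmonic extension is $N(p)$ itself, so the right-hand side equals $(N(p),u)_{H^{1/2}_0(\pOmega)}$. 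Since the Riesz representer is unique, $\cR(p)=N(p)$ on $\pOmega$.

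Next we invert this relation to get $\cR^{-1}(u)=\partial_\bn D(u)$. Since $\Delta D(u)=0\in L^2(\Omega)$, Lemma \ref{l:D(A;L2,H1)}(a.2) places $\partial_\bn D(u)$ in $H^{-1/2}(\pOmega)$. It has zero mean, because $\duality{\partial_\bn D(u),1}=\int_\Omega\Delta D(u)=0$. The Neumann problem with data $\partial_\bn D(u)$ is solved by $D(u)$, which has zero boundary mean, so uniqueness gives $N(\partial_\bn D(u))=D(u)$ and hence $\cR(\partial_\bn D(u))=u$. Both displayed formulas then follow from the defining relation \eqref{e:Riesz}, written as $(p,\cR^{-1}u)_{H^{-1/2}_0}=(\cR p,u)_{H^{1/2}_0}=\duality{p,u}$.

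The main obstacle is the first step: checking that the paper's ``classical'' inner product on the quotient space is, or can be taken to be, the Dirichlet-energy inner product of harmonic extensions. With the full $H^1$ norm from \eqref{e:|q|H1/2=inf}, the representer would instead be the Robin solution of $\partial_\bn P+P=p$, not $N(p)$. So we will state this choice of inner product explicitly and justify its equivalence via Poincar\'e on zero-mean harmonic functions.
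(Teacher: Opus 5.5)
Your proof is correct and is essentially the paper's argument run in the opposite order. The paper also uses the Dirichlet-energy inner product $\int_\Omega\nabla D(u)\cdot\nabla D(v)\,d\bx$ (tacitly, when it says ``we can take $U_h=D(u)$'') and Green's formula \eqref{e:duality(D(L;H1)}, but it derives $\cR^{-1}=\partial_\bn D$ first and then obtains $\cR=N$ from $\partial_\bn D(N(p))=p$; your explicit choice of inner product and your zero-boundary-mean normalization of $N(p)$ are exactly what that argument leaves implicit.

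One correction to your closing aside. With the full $H^1$ inner product of the $(I-\Delta)$-extensions in \eqref{e:|q|H1/2=inf}, Green's formula gives $(u,v)_{H^{1/2}(\pOmega)}=\duality{\partial_\bn U_h,v}$. The representer of $p$ is therefore the trace of the solution of $P-\Delta P=0$ in $\Omega$, $\partial_\bn P=p$ on $\pOmega$ (up to a constant Neumann datum in the zero-mean setting), not of a Robin problem for the Laplacian. The Robin problem would correspond instead to the inner product $\int_\Omega\nabla D(u)\cdot\nabla D(v)\,d\bx+\int_{\pOmega}uv\,d\sigma$.
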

{\bf Proof}.
The equalities \eqref{e:R=N,R-1=pnD} are classical. For sake of self-containedness we show their
proofs. From \eqref{e:|q|H1/2=inf} we have 
\begin{equation}\label{e:(Uh,Vh)}
(u,v)_{H^{1/2}_0(\pOmega)}
=(U_h,V_h)_{H^1(\Omega)},\quad u,v\in H^{1/2}_0(\pOmega),
\end{equation}
with $U_h$ given by \eqref{e:|q|H1/2=inf}  and $V_h$ similarly.
We note that Poincar\'e inequality holds in 
$\{u\in H^1(\Omega),\;\; \gamma_0(u)\in H^{1/2}_0(\pOmega)\}$, and therefore 
$\|\nabla(\cdot)\|_{L^2(\Omega)}$ is a norm in $\{u\in H^1(\Omega),\;\; \gamma_0(u)\in H^{1/2}_0(\pOmega)\}$. 
This implies that we can take $U_h=D(u)$, $V_h=D(v)$. Then \eqref{e:(Uh,Vh)} implies
\begin{equation}
(u,v)_{H^{1/2}_0(\pOmega)}
=
\duality{\partial_\bn D(u),v},	
\quad
u,v\in H^{1/2}_0(\pOmega). \label{e:(u,v)=<pnD,v>}
\end{equation}
Then \eqref{e:(u,v)=<pnD,v>} and \eqref{e:Riesz} imply
\begin{align*}
\duality{\cR^{-1}(u),v}
&=
(u,v)_{H^{1/2}_0(\pOmega)}
=\duality{\partial_\bn D(u),v},\quad \forall u,v\in H^{1/2}_0(\pOmega),
\end{align*}
which proves $\cR^{-1}(u)=\partial_\bn D(u)$.
For the other equality, we note that $\partial_\bn D(N(p))=p$ holds true, which
combined with $\cR^{-1}=\partial_\bn D$, so $\cR(\partial_\bn D(\cdot))=(\cdot)$,  gives
\[
\cR(p)=\cR(\partial_\bn D(N(p)))=N(p),
\]
which completes the proof.
\hfill$\Box$

\section{Proof of the  main results}\label{sec:proof}
\begin{theorem}{\bf (proof of (a), Theorem \ref{th:main})}\label{th:uf-weak}
The problem \eqref{e:uf-all} has a unique weak solution ${\bomega}\in\bD(\Delta,\bH^1(\Omega))$
with $\nabla\cdot{\bomega}=0$.
\end{theorem}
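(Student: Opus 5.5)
Problem \eqref{e:uf-all} will be cast as a variational problem on $\bH^1(\Omega)$ with prescribed tangential trace, using the $\bH^1_\bn(\Omega)$ structure. First lift the tangential data: by Lemma \ref{l:collection}(a), choose $\bg_h\in\bH^1(\Omega)$ with $\bg_h=\bg$ on $\pOmega$, and look for $\bomega=(\bg_h)_\bt+\bw$ with $\bw\in\bH^1_\bn(\Omega)$, where $(\bg_h)_\bt:=\bg_h-(\bg_h\cdot\bn)\bn\in\bH^1(\Omega)$ uses the Lipschitz extension \eqref{e:n}. The weak formulation is
\begin{equation*}
a(\bomega,\bphi):=\int_\Omega\big(\alpha\,\bomega\cdot\bphi+\mu(\nabla\cdot\bomega)(\nabla\cdot\bphi)+\tfrac{\mu}{2}(\nabla\times\bomega):(\nabla\times\bphi)\big)d\bx=\int_\Omega\bvarpi\cdot\bphi\,d\bx,\quad\forall\bphi\in\bH^1_\bn(\Omega).
\end{equation*}
The divergence-curl form is used so that the natural boundary condition is $\nabla\cdot\bomega=0$ rather than $\partial_\bn\bomega\cdot\bn=0$. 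By \eqref{e:H1am=H(u,div,rot)} and \eqref{e:|.|H1amu~|.|H1}, $a$ is continuous and coercive on $\bH^1_\bn(\Omega)$ (for $\alpha=0$ via Poincar\'e, Lemma \ref{l:H1n, Poincare}). Lax--Milgram then gives a unique $\bw$, hence a unique $\bomega$.

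Next, recover the strong form. Testing with $\bphi\in\bcD(\Omega)$ and using the identity $-\Delta\bomega=-\nabla(\nabla\cdot\bomega)+\nabla\cdot(\nabla\times\bomega)$ gives $\alpha\bomega-\mu\Delta\bomega=\bvarpi$ in $\cD'$. Hence $\Delta\bomega\in\bL^2(\Omega)$ and $\bomega\in\bD(\Delta,\bH^1(\Omega))$. Now take a general $\bphi=\varphi\bn+\bv\in\bH^1_\bn(\Omega)$ and integrate by parts, using Lemma \ref{l:div(u),H1L2}(b)--(e) together with the density of $\bcD(\ol\Omega)$ from Lemma \ref{l:div(u),H1L2}(a). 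The boundary terms from the div part give $\duality{\nabla\cdot\bomega,\bphi\cdot\bn}$. The curl part yields a boundary term $\int_{\pOmega}((\nabla\times\bomega)\bn)\cdot\bphi$, which for $\bphi$ normal reduces to $(\nabla\times\bomega)\bn\cdot\bn=0$ by antisymmetry. Hence $\duality{\nabla\cdot\bomega,\varphi}=0$ for all $\varphi\in H^{1/2}(\pOmega)$, i.e. \eqref{e:dnufn-pOm} holds in $H^{-1/2}(\pOmega)$.

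Finally, show $z:=\nabla\cdot\bomega=0$. Since $\nabla\cdot\bvarpi=0$, $z\in L^2(\Omega)$ satisfies $\alpha z-\mu\Delta z=0$ in $\cD'(\Omega)$, so $z\in D(\Delta,L^2(\Omega))$ with trace $0$ in $H^{-1/2}(\pOmega)$ by Lemma \ref{l:D(A;L2,H1)}(b.2). For any $\psi\in L^2$, let $\eta\in H^2\cap H^1_0$ solve $\alpha\eta-\mu\Delta\eta=\psi$ (Lemma \ref{l:D,N-iso}). Applying \eqref{e:duality(D(L;L2)} with $q=\eta$ gives $\mu^{-1}\int z\psi=\duality{z,\partial_\bn\eta}$ up to sign, and the right-hand side vanishes. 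Hence $z=0$.

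The main obstacle is making the boundary terms of the curl part rigorous: the trace $(\nabla\times\bomega)\bn$ is only defined in a weak sense for $\bomega\in\bD(\Delta,\bH^1)$. I would handle this by proving the Green formula for smooth fields, where the antisymmetry cancellation is pointwise, and passing to the limit by density in $\bD(\Delta,\bH^1(\Omega))$ with the continuity estimates \eqref{e:|divu|H-1/2} and \eqref{e:<Dnu,phi>=<Dnu*n,phi*n>}. A further technical point is that the lifting $(\bg_h)_\bt$ depends on the extension of $\bn$; uniqueness does not depend on that choice, because the difference of two solutions lies in $\bH^1_\bn(\Omega)$ and coercivity forces it to vanish.
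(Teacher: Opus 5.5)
Your proof is correct. Its skeleton is the paper's: lift the tangential data, apply Lax--Milgram on $\bH^1_\bn(\Omega)$ with the same coercive form, test with $\bcD(\Omega)$ to get the equation and $\bomega\in\bD(\Delta,\bH^1(\Omega))$, recover the natural boundary condition, then kill $z=\nabla\cdot\bomega$. You differ in the lifting and in how the boundary condition is extracted.

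The paper lifts with $\bg$ solving $\alpha\bg-\mu\Delta\bg=0$, so that $\nabla\cdot\bg|_{\pOmega}\in H^{-1/2}(\pOmega)$ is defined. Via Lemma \ref{l:div(u),H1L2}(d) it rewrites $\nabla\cdot\bw=-\nabla\cdot\bg$ as the Robin-type condition $\partial_\bn\bw\cdot\bn+(\nabla_\bt\cdot\bn)(\bw\cdot\bn)=-\nabla\cdot\bg$. It then works with the gradient-plus-curvature form $(\cdot,\cdot)_{\bH^1_{\alpha,\mu}(\Omega)}$ and the data term $-\mu\duality{\nabla\cdot\bg,\bphi\cdot\bn}$, and recovers the condition through Lemma \ref{l:div(u),H1L2}(e).

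You instead pose the problem intrinsically for $\bomega$ with the div--curl form, which equals $(\cdot,\cdot)_{\bH^1_{\alpha,\mu}(\Omega)}$ on $\bH^1_\bn(\Omega)$ by Lemma \ref{l:div*div+curl*curl=grad*grad}. You need only an $\bH^1$ lifting, and you read off $\nabla\cdot\bomega=0$ from the Green identity
\[
a(\bomega,\bphi)=\int_\Omega(\alpha\bomega-\mu\Delta\bomega)\cdot\bphi\,d\bx+\mu\duality{\nabla\cdot\bomega,\bphi\cdot\bn},\qquad \bomega\in\bD(\Delta,\bH^1(\Omega)),\ \bphi\in\bH^1_\bn(\Omega).
\]
Applied to the paper's lifting, this gives $a(\bg,\bphi)=\mu\duality{\nabla\cdot\bg,\bphi\cdot\bn}$, since $\alpha\bg-\mu\Delta\bg=0$. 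That is exactly the paper's data term, so the two weak problems produce the same $\bomega$.

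What each route buys: yours avoids tangential divergence and mean curvature entirely. Because the identity holds on all of $\bD(\Delta,\bH^1(\Omega))$, it also shows that every strong solution of \eqref{e:uf-all} in that space is the weak one. The paper's route keeps the gradient form and reuses Lemma \ref{l:div(u),H1L2}, which it needs elsewhere.

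The ``main obstacle'' you flag is not really one. For smooth $\bomega$ the curl boundary term vanishes pointwise against any $\bphi\in\bH^1_\bn(\Omega)$. Every remaining term in the identity is continuous on $\bD(\Delta,\bH^1(\Omega))$ by Lemma \ref{l:div(u),H1L2}(b). So no weak trace of $(\nabla\times\bomega)\bn$ is needed, and neither is part (e). Just be sure to pass to the limit in the combined identity, where the $\nabla(\nabla\cdot\bomega)$ contributions of the div and curl parts cancel, rather than in the two parts separately.

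Your duality argument for $z=0$ also supplies a step the paper only asserts.
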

{\bf Proof}.
Without loss of generality, we  assume that $\bg$ is the boundary trace of a 
$\bH^1(\Omega)$ satisfying
\begin{equation}\label{e:g}
\bg\in\bH^1(\Omega),\;\; \alpha\bg - \mu\Delta{\bg} = 0.
\end{equation}
Then $\bg\in \bD(\Delta,\bH^1(\Omega))$, and therefore from Lemma \ref{l:div(u),H1L2} we have
$\nabla\cdot\bg,\, \partial_\bn\bg\cdot\bn\in H^{-1/2}(\partial\Omega)$.
We  look for ${\bomega}$ in the form $\bomega={\bw}+\bg$. Then necessarily
$\bw$ satisfies
\begin{subequations}\label{e:bw-all}
\begin{align}
\alpha{\bw}-\mu\Delta{\bw}&=\bvarpi\;\; in\;\; \Omega,	\label{e:bw-Om}
\\
{\bw}_\bt&=0\;\; on\;\;\pOmega,						\label{e:bwt-pOm}
\\
\nabla\cdot\bw 
&=-
\nabla\cdot\bg
\;\; on\;\; \partial\Omega.	\label{e:dnbwn-pOm}
\end{align}
\end{subequations}
Taking into account \eqref{e:D*u=H(u*n)}, the condition \eqref{e:dnbwn-pOm} is equivalently written as
\begin{equation}\label{e:Dw*n}
\partial_\bn{\bw}\cdot\bn
=
-
(\nabla_\bt\cdot\bn)(\bw\cdot\bn)-\nabla\cdot\bg,
\end{equation}
which implies the following weak formulation for \eqref{e:bw-all}:
find ${\bw}\in\bH^1_\bn(\Omega)$ such that
\begin{equation}\label{e:bw-weak}
({\bw},{\bphi})_{\bH^1_{\alpha,\mu}(\Omega)}
= 
\int_\Omega 
(\bvarpi\cdot{\bphi}){d\bx}
-
\mu\duality{\nabla\cdot{\bg},\bphi\cdot\bn},
\quad 
\forall \bphi\in \bH^1_\bn(\Omega).
\end{equation}
From  Lemma \ref{l:|.|H1am~|.|H1}, the left hand side of \eqref{e:bw-weak} is a bilinear symmetric coercive form in  $\bH^1_\bn(\Omega)$, 
and from Lemma \ref{l:Dq} the  right hand side of \eqref{e:bw-weak} is in $\bH^{-1}_\bn(\Omega)$.
Therefore, \eqref{e:bw-weak} has a unique solution ${\bw}\in\bH^1_\bn(\Omega)$.

Taking $\bphi\in \bcD(\Omega)$ in \eqref{e:bw-weak}  shows that 
$\alpha{\bw}-\mu\Delta{\bw}=\bvarpi$ in $\bcD'(\Omega)$, 
which proves \eqref{e:bw-Om}, so \eqref{e:uf-Om}.
Clearly  \eqref{e:uft-pOm} holds from the definition of $\bw$.
To prove \eqref{e:dnufn-pOm} we note that \eqref{e:bw-Om} implies 
$\bw\in \bD(\Delta,\bH^1(\Omega))$ because $\bvarpi\in \bL^2(\Omega)$.
Then (a), Lemma \ref{l:D(A;L2,H1)} implies that $\partial_\bn\bw\in\bH^{-1/2}(\partial\Omega)$ and 
the following  formula of integration  by parts holds
\[
\duality{\partial_\bn\bw,\bphi}=
\int_\Omega(\bphi\cdot\Delta\bw +\nabla\bw:\nabla\bphi),\quad \bphi\in\bH^1_\bn(\Omega),
\]
which after using $\mu\Delta{\bw}=\alpha{\bw}-\bvarpi$  gives
\begin{equation}\label{e:Dw*n=}
\mu\duality{\partial_\bn\bw,\bphi}
=
\int_\Omega(\alpha(\bw\cdot\bphi)+\mu(\nabla\bw:\nabla\bphi) - \bvarpi\cdot\bphi,
\quad \bphi\in\bH^1_\bn(\Omega).
\end{equation}
By using \eqref{e:<Dnu,phi>=<Dnu*n,phi*n>} and \eqref{e:bw-weak}, from \eqref{e:Dw*n=} we get
\begin{eqnarray*}
\mu\duality{\partial_\bn\bw\cdot\bn,\bphi\cdot\bn}
&=&
\int_\Omega(\alpha(\bw\cdot\bphi)+\mu(\nabla\bw:\nabla\bphi)-\bvarpi\cdot\bphi){d\bx}
\\
&=&
-\mu\duality{(\nabla_\bt\cdot\bn)(\bw\cdot\bn)+\nabla\cdot{\bg},\bphi\cdot\bn};
\qquad\mbox{so}
\\
\partial_\bn\bw\cdot\bn + (\nabla_\bt\cdot\bn)(\bw\cdot\bn)
&=&
-\nabla\cdot{\bg}\quad in\; H^{-1/2}(\partial\Omega),
\end{eqnarray*}
which in view of  \eqref{e:D*u=H(u*n)} is nothing but 
\eqref{e:dnbwn-pOm}, or equivalently \eqref{e:dnufn-pOm}.

Now we show that $\nabla\cdot\bomega=0$ in $L^2(\Omega)$. We note that from \eqref{e:uf-Om} we get
\[
\nabla\cdot{\bomega}\in L^2(\Omega),\quad
\alpha(\nabla\cdot{\bomega})-\mu\Delta(\nabla\cdot{\bomega})=0\;\; 
\mbox{in}\;\; L^2(\Omega),
\]%
so $\nabla\cdot{\bomega}\in D(\Delta,L^2(\Omega))$,
which combined with 
\eqref{e:dnufn-pOm},
implies  $\nabla\cdot\bomega=0$ in $L^2(\Omega)$ and completes~the~proof.
\hfill$\Box$

\begin{theorem}{\bf (proof of (b), Theorem \ref{th:main})}\label{th:uq-weak}
For  $q\in H^{-1/2}_0(\pOmega)$ let us still denote by $q$ its 
$D(\Delta,L^2(\Omega))$ harmonic extension. 
Then the problem \eqref{e:uq-all} has a unique weak 
solution $\bthetaup(q)\in\bH^1_\bn(\Omega)$ satisfying
$\nabla\cdot\bthetaup(q)=0$ in $L^2(\Omega)$. 
\end{theorem}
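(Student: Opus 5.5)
I would follow the proof of Theorem \ref{th:uf-weak} step by step. Here $\nabla q$ plays the role of $\bvarpi$, and Lemma \ref{l:Dq} handles the fact that $q$ is only in $D(\Delta,L^2(\Omega))$.

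\emph{Weak formulation.} The boundary condition \eqref{e:dnuqn-pOm}, rewritten via \eqref{e:D*u=H(u*n)}, becomes $\partial_\bn\bthetaup\cdot\bn=-(\nabla_\bt\cdot\bn)(\bthetaup\cdot\bn)$. This suggests the problem: find $\bthetaup\in\bH^1_\bn(\Omega)$ such that
\[
(\bthetaup,\bphi)_{\bH^1_{\alpha,\mu}(\Omega)}=-\duality{\nabla q,\bphi}
=\int_\Omega q(\nabla\cdot\bphi)\,d\bx-\duality{q,\bphi\cdot\bn},\quad\forall\bphi\in\bH^1_\bn(\Omega),
\]
where the equality comes from \eqref{e:<Dp,v>}. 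By Lemma \ref{l:Dq} the right-hand side is a continuous linear form on $\bH^1_\bn(\Omega)$. Lemma \ref{l:|.|H1am~|.|H1} gives coercivity, so Lax--Milgram yields a unique $\bthetaup\in\bH^1_\bn(\Omega)$, with $\|\bthetaup\|_{\bH^1}\le C\|q\|_{D(\Delta,L^2)}$. Note that $\bthetaup_\bt=0$ holds automatically by the choice of space. Testing with $\bphi\in\bcD(\Omega)$ gives \eqref{e:uq-Om} in $\bcD'(\Omega)$. Since the form is defined on all of $\bH^1_\bn(\Omega)$, the equation in fact holds in $\bH^{-1}_\bn(\Omega)$.

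\emph{Main obstacle: the boundary condition on the divergence.} The difficulty is that $\Delta\bthetaup=\mu^{-1}(\alpha\bthetaup+\nabla q)$ is not in $\bL^2$, since $\nabla q\in\bH^{-1}_\bn$ only. So Lemma \ref{l:div(u),H1L2} does not apply directly.

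I would first try to split off the singular part. Since $q$ is harmonic, set $\bthetaup=\bthetaup_1-\mu^{-1}\nabla\psi$, where $\psi\in D(\Delta,L^2(\Omega))$ solves $\Delta\psi=q$ and is chosen with $\alpha\psi-\mu\Delta\psi$ harmonic. The simplest choice is $\psi$ with $\Delta\psi=q$ and $\Delta^2\psi=0$, so that $\alpha\nabla\psi-\mu\nabla q$ is a gradient of an $L^2$-Laplacian function. The remainder $\bthetaup_1$ then lies in $\bD(\Delta,\bH^1(\Omega))$ after adjusting $\alpha$-terms. Alternatively, I would argue directly on $z=\nabla\cdot\bthetaup\in L^2(\Omega)$. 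Taking the divergence of \eqref{e:uq-Om} in $\cD'(\Omega)$ and using $\Delta q=0$ gives $\alpha z-\mu\Delta z=0$, so $z\in D(\Delta,L^2(\Omega))$ and its trace lies in $H^{-1/2}(\pOmega)$ by Lemma \ref{l:D(A;L2,H1)}(b.2).

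To identify the trace, I would use \eqref{e:duality(D(L;L2)} with test functions $\varphi=g\delta$ as in the proof of Lemma \ref{l:div(u),H1L2}(b). I would then insert $\bphi=\nabla(g\delta)$-type fields, or more simply $\bphi\in\bH^1_\bn$ with prescribed $\bphi\cdot\bn$, into the weak formulation. Comparing with \eqref{e:<Dp,v>} should show that the $q$-boundary terms cancel. This cancellation is exactly why $-\duality{q,\bphi\cdot\bn}$ appears with that sign, and it should give $\duality{z,g}=0$ for all $g$, i.e.\ \eqref{e:dnuqn-pOm}.

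\emph{Conclusion.} Once $z\in D(\Delta,L^2(\Omega))$ solves $\alpha z-\mu\Delta z=0$ with zero trace, I argue by density and the integration by parts \eqref{e:duality(D(L;L2)} against $H^2$ solutions of the adjoint problem, using Lemma \ref{l:D,N-iso}. This gives $z=0$, so $\nabla\cdot\bthetaup(q)=0$ in $L^2(\Omega)$. Uniqueness follows from coercivity.
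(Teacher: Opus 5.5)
Your proposal is correct, but it takes a different route from the paper at the key step: showing that the trace of $z=\nabla\cdot\bthetaup(q)$ vanishes. The weak formulation, Lax--Milgram, and the equation $\alpha z-\mu\Delta z=0$ are the same in both.

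\textbf{The paper's route.} It approximates $q$ by $q^k\in\cD(\ol{\Omega})$ in $D(\Delta,L^2(\Omega))$. Since $\nabla q^k\in\bL^2(\Omega)$, the part (a) argument gives $\nabla\cdot\bthetaup(q^k)=0$ on $\pOmega$. It then passes to the limit: $\bthetaup(q^k)\to\bthetaup(q)$ in $\bH^1(\Omega)$ and $\alpha z^k-\mu\Delta z^k=-\Delta q^k\to0$ give $z^k\to z$ in $D(\Delta,L^2(\Omega))$. The traces then converge by \eqref{e:|q|H-1/2<C|q|L^2(Delta)}.

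\textbf{Your route.} You test \eqref{e:uq-weak} directly with $\bphi=\nabla\varphi$, $\varphi=g\delta$. This works, but you only announced the cancellation, so here is the computation.
\begin{itemize}
\item Since $\nabla\varphi=g\bn$ on $\pOmega$, we have $\nabla\varphi\in\bH^1_\bn(\Omega)$.
\item In the polarized form of \eqref{e:H1am=H(u,div,rot)} the curl term drops, because $\nabla\times\nabla\varphi=0$.
\item Since $\varphi=0$ on $\pOmega$, $\int_\Omega\alpha\bthetaup\cdot\nabla\varphi=-\alpha\int_\Omega z\varphi$.
\item The left side is therefore $\int_\Omega z(\mu\Delta\varphi-\alpha\varphi)=\mu\int_\Omega(z\Delta\varphi-\varphi\Delta z)$. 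By \eqref{e:duality(D(L;L2)} this equals $\mu\duality{z,g}$.
\item On the right, \eqref{e:duality(D(L;L2)} applied to the harmonic $q$ gives $\int_\Omega q\Delta\varphi=\duality{q,g}$. This cancels the term $-\duality{q,g}$.
\end{itemize}

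The gradient structure is what makes this work. Your fallback of an arbitrary $\bphi\in\bH^1_\bn(\Omega)$ with prescribed $\bphi\cdot\bn$ would not isolate $\duality{z,g}$, since the curl term and $\nabla\cdot\bphi$ no longer reduce as above. The splitting via $\psi$ is also unnecessary.

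\textbf{Comparison.} Your route needs no approximation argument, no appeal to part (a), and no continuity of $q\mapsto\bthetaup(q)$. It also shows that \eqref{e:dnuqn-pOm} is simply the natural boundary condition built into \eqref{e:uq-weak}. The paper's route, by contrast, reuses the machinery it already set up for part (a).

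Your idea can be shortened further. Take $\varphi\in H^2(\Omega)\cap H^1_0(\Omega)$ with $\alpha\varphi-\mu\Delta\varphi=z$; this uses Dirichlet $H^2$-regularity on the $C^{1,1}$ domain. The same computation then gives $-\|z\|^2_{L^2(\Omega)}=0$ directly, with no trace of $z$ and no final uniqueness step.
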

{\bf Proof}.
The proof follows the lines of the proof of Theorem \ref{th:uf-weak}.
Indeed, using Lemma \ref{l:Dq} and Lemma \ref{l:div(u),H1L2},
the weak formulation of  \eqref{e:uq-all} is:
find $\bthetaup(q)\in\bH^1_\bn(\Omega)$ such that
\begin{equation}\label{e:uq-weak}
(\bthetaup(q),\bphi)_{\bH^1_{\alpha,\mu}(\Omega)}
= 
-\duality{q,\bphi\cdot\bn} + \int_\Omega q(\nabla\cdot\bphi) {d\bx},
\quad 
\forall \bphi\in \bH^1_\bn(\Omega).
\end{equation}
From  Lemma \ref{l:|.|H1am~|.|H1}, the left hand side of \eqref{e:uq-weak} is a bilinear symmetric coercive form in  $\bH^1_\bn(\Omega)$, 
and from Lemma \ref{l:Dq}  the  right hand side of \eqref{e:bw-weak} is in $\bH^{-1}_\bn(\Omega)$.
Therefore, \eqref{e:uq-weak} has a unique solution $\bthetaup(q)\in\bH^1_\bn(\Omega)$.

Taking $\bphi\in\bcD(\Omega)$ shows that $\bthetaup(q)$ satisfies \eqref{e:uq-Om}.
Furthermore, $\bthetaup(q)$ satisfies \eqref{e:uqt-pOm} by the construction of $\bH^1_\bn(\Omega)$.
To prove that $\bthetaup(q)$ satisfies \eqref{e:dnuqn-pOm} we cannot proceed 
as in the proof of \eqref{e:dnufn-pOm} in Theorem \ref{th:uf-weak} because in general, 
$\nabla q\notin \bL^2(\Omega)$ and therefore $\bthetaup(q)\notin\bD(\Delta,\bH^1(\Omega))$.
Then we proceed as follows.
First we note that \eqref{e:uq-Om} and $\Delta q=0$ imply
\begin{equation}\label{e:div(uq)=0}
\nabla\cdot\bthetaup(q)\in L^2(\Omega),\quad
\alpha(\nabla\cdot\bthetaup(q))-\mu\Delta(\nabla\cdot\bthetaup(q))=0\;\; 
\mbox{in}\;\; L^2(\Omega).
\end{equation}
Next we take a sequence $(q^k)$ in $\bcD(\ol{\Omega})$ converging to $q$ in 
$D(\Delta,L^2(\Omega))$.
Further, we  consider the problem \eqref{e:uq-weak}, with $q^k$ instead of $q$, and denote its solution 
by $\bthetaup(q^k)$.
Then as in Theorem \ref{th:uf-weak} we get $\nabla\cdot\bthetaup(q^k)=0$
in $H^{-1/2}(\partial\Omega)$.
Taking the divergence of \eqref{e:uq-Om} with $(\bthetaup(q^k),q^k)$ instead of 
$(\bthetaup(q),q)$ gives
\begin{equation}\label{e:div(uqk)=0}
\nabla\cdot\bthetaup(q^k)\in L^2(\Omega),\quad
\alpha(\nabla\cdot\bthetaup(q^k))-\mu\Delta(\nabla\cdot\bthetaup(q^k))
=
-\Delta q^k\;\; 
\mbox{in}\;\; L^2(\Omega).
\end{equation}
Note that the convergence of $q^k$ to $q$ in $D(\Delta,L^2(\Omega))$ and
\eqref{e:uq-weak} with $q^k$ and $q$ imply 
$\lim_{k\to\infty}\bthetaup(q^k)=\bthetaup(q)$ in $\bH^1(\Omega)$, 
which combined with  \eqref{e:div(uq)=0} and \eqref{e:div(uqk)=0} implies
$\lim_{k\to\infty}\nabla\cdot\bthetaup(q^k)=\nabla\cdot\bthetaup(q)$ in 
$D(\Delta,L^2(\Omega))$.
Therefore 
$\lim_{k\to\infty}\nabla\cdot\bthetaup(q^k)=0=\nabla\cdot\bthetaup(q)$ in 
$H^{-1/2}(\partial\Omega)$, which proves \eqref{e:dnuqn-pOm}.

Finally, \eqref{e:div(uq)=0} and \eqref{e:dnuqn-pOm} imply $\nabla\cdot\bthetaup(q)=0$ in 
$L^2(\Omega)$, which completes the proof.
\hfill$\Box$

\begin{theorem}{\bf (proof of (c), Theorem \ref{th:main})}\label{th:A}
Let $A$ be the operator defined by
\begin{equation}
A:H^{-1/2}_0(\partial\Omega)\mapsto H^{1/2}_0(\partial\Omega),
\quad
Aq=-\bthetaup(q)\cdot\bn,
\label{e:A}
\end{equation}
where  $\bthetaup(q)$ is the solution of \eqref{e:uq-all}.
Then $A$ is is a self-adjoint, definite positive isomorphism from 
$H^{-1/2}_0(\partial\Omega)$ to $H^{1/2}_0(\partial\Omega)$ satisfying \eqref{e:qAq=qAp} and 
\eqref{e:m<(Aq,q)<M}.
\end{theorem}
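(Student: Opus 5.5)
The plan is to read everything off the weak formulation \eqref{e:uq-weak}, combined with the fact that $\nabla\cdot\bthetaup(q)=0$ in $L^2(\Omega)$ (Theorem \ref{th:uq-weak}).

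\textbf{Symmetric representation of $A$.} For $p,q\in H^{-1/2}_0(\pOmega)$, with harmonic extensions still denoted $p,q$, I test \eqref{e:uq-weak} for $\bthetaup(q)$ with $\bphi=\bthetaup(p)\in\bH^1_\bn(\Omega)$. Since $\nabla\cdot\bthetaup(p)=0$, the volume integral $\int_\Omega q\,\nabla\cdot\bthetaup(p)$ vanishes, and I get
\begin{equation*}
(\bthetaup(q),\bthetaup(p))_{\bH^1_{\alpha,\mu}(\Omega)}=-\duality{q,\bthetaup(p)\cdot\bn}=\duality{q,Ap}.
\end{equation*}
The left side is symmetric in $p,q$, which gives \eqref{e:qAq=qAp}. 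Taking $p=q$ gives $\duality{q,Aq}=\|\bthetaup(q)\|^2_{\bH^1_{\alpha,\mu}(\Omega)}\ge0$.

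Before this, I check that $Aq\in H^{1/2}_0(\pOmega)$. Because $\bthetaup(q)\in\bH^1(\Omega)$, its normal trace lies in $H^{1/2}(\pOmega)$. Its mean vanishes by the divergence theorem: $\int_{\pOmega}\bthetaup\cdot\bn=\int_\Omega\nabla\cdot\bthetaup=0$. Since the mean is zero, the pairing with $q$ is well defined on the quotient $H^{-1/2}_0(\pOmega)$.

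\textbf{Upper bound.} The right side of \eqref{e:uq-weak} has dual norm in $\bH^{-1}_\bn(\Omega)$ bounded by $C\|q\|_{D(\Delta,L^2)}$ (Lemma \ref{l:Dq}). Since $\Delta q=0$, this reduces to $C\|q\|_{L^2(\Omega)}$. I then need $\|q\|_{L^2(\Omega)}\le C\|q\|_{H^{-1/2}(\pOmega)}$ for harmonic $q$. This is the standard very-weak Dirichlet estimate: write $\int_\Omega q\psi=\duality{q,\partial_\bn\chi}$, where $\chi\in H^2\cap H^1_0$ solves $\Delta\chi=\psi$ (Lemma \ref{l:D,N-iso}, via \eqref{e:duality(D(L;L2)}), and use $\|\partial_\bn\chi\|_{H^{1/2}}\le C\|\psi\|_{L^2}$. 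Coercivity (Lemma \ref{l:|.|H1am~|.|H1}) then gives $\|\bthetaup(q)\|_{\bH^1_{\alpha,\mu}}\le C\|q\|_{H^{-1/2}}$, hence $\duality{q,Aq}\le M\|q\|^2$.

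\textbf{Lower bound (main obstacle).} I need $\|q\|_{H^{-1/2}_0}\le C\|\bthetaup(q)\|_{\bH^1_{\alpha,\mu}}$. By the Riesz identification (Lemma \ref{l:R,R-1}), $\|q\|_{H^{-1/2}_0}=\sup\duality{q,u}/\|u\|_{H^{1/2}_0}$. Given $u\in H^{1/2}_0(\pOmega)$, I will construct $\bphi\in\bH^1_\bn(\Omega)$ with $\bphi\cdot\bn=u$ on $\pOmega$, $\nabla\cdot\bphi=0$ in $\Omega$, and $\|\bphi\|_{\bH^1}\le C\|u\|_{H^{1/2}}$. The construction has two steps:
\begin{itemize}
\item Take $\bphi_0=D(u)\,\bn$, so that $\|\bphi_0\|_{\bH^1}\le C\|u\|_{H^{1/2}}$, using the Lipschitz extension of $\bn$ from \eqref{e:n}.
\item Correct the divergence with $\bu_{div}$ from Lemma \ref{l:collection}(c). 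Set $\bphi=\bphi_0-\bu_{div}(\nabla\cdot\bphi_0)$, which is admissible because $\int_\Omega\nabla\cdot\bphi_0=\int_{\pOmega}u=0$. Since $\bu_{div}\in\bH^1_0(\Omega)$, the boundary trace is unchanged, so $\bphi\in\bH^1_\bn(\Omega)$ with $\bphi\cdot\bn=u$.
\end{itemize}
Inserting this $\bphi$ into \eqref{e:uq-weak}, the volume term drops and
\begin{equation*}
\duality{q,u}=-(\bthetaup(q),\bphi)_{\bH^1_{\alpha,\mu}}\le C\|\bthetaup(q)\|_{\bH^1_{\alpha,\mu}}\|u\|_{H^{1/2}},
\end{equation*}
using equivalence of norms. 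Therefore $\|q\|^2\le C\duality{q,Aq}$, which gives $m>0$.

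\textbf{Isomorphism.} With the two bounds, $(p,q)\mapsto\duality{p,Aq}$ is a bounded, coercive, symmetric form on $H^{-1/2}_0(\pOmega)$. Lax–Milgram, composed with the Riesz map $\cR$, then shows that $A$ is an isomorphism onto $H^{1/2}_0(\pOmega)$.
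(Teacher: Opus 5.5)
Your proof is correct. Its first step, testing \eqref{e:uq-weak} with $\bphi=\bthetaup(p)$ and using $\nabla\cdot\bthetaup(p)=0$, is exactly the paper's. The two bounds and surjectivity, however, follow a different route.

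\textbf{Upper bound.} The paper never estimates the right-hand side of \eqref{e:uq-weak}. It chains the bound on normal traces in $\bH^1_\bn(\Omega)$, the trace theorem and the norm equivalence to get $\|Aq\|^2_{H^{1/2}_0(\pOmega)}\le M\|\bthetaup(q)\|^2_{\bH^1_{\alpha,\mu}(\Omega)}=M\duality{q,Aq}\le M\|q\|\,\|Aq\|$. This gives $\|Aq\|\le M\|q\|$ and $\duality{q,Aq}\le M\|q\|^2$ at once, with no elliptic regularity. Your route instead needs the very-weak Dirichlet estimate for the harmonic extension, proved by transposition through $H^2$-regularity. Since $q$ is only defined up to constants, read that estimate for the mean-zero representative: test with $\int_\Omega\psi=0$ so that $\partial_\bn\chi\in H^{1/2}_0(\pOmega)$. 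In exchange, your route makes explicit the continuity of $q\mapsto\bthetaup(q)$, which the paper uses tacitly when it takes the harmonic extension in $D(\Delta,L^2(\Omega))$.

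\textbf{Lower bound.} The paper applies the right inverse of the divergence from Lemma \ref{l:collection}(c) to $q$ itself, obtaining $\|q\|^2_{L^2(\Omega)}\le c^{-1}_{\alpha,\mu}C_{div}\duality{q,Aq}$. It then passes to the boundary with the trace bound $\|q\|_{H^{-1/2}(\pOmega)}\le C\|q\|_{D(\Delta,L^2(\Omega))}$. You apply the same operator to $\nabla\cdot(D(u)\bn)$ to build a divergence-free lifting of arbitrary normal data $u\in H^{1/2}_0(\pOmega)$. You then bound $\|q\|_{H^{-1/2}_0(\pOmega)}$ directly by duality, never touching the interior $L^2$ norm of $q$.

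\textbf{Surjectivity.} Your Lax--Milgram argument is cleaner than the paper's closed-range-plus-density argument. The paper asserts that $Im(A)$ is closed because $A$ is continuous; that claim actually rests on the lower bound $m\|q\|\le\|Aq\|$.
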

{\bf Proof}.
First we note that we equipp $H^{-1/2}_0(\partial\Omega)$ with the inner product
\eqref{e:(p,q)_H{-1/2}}. 
Note also that for every $q\in H^{-1/2}_0(\pOmega)$ we consider 
its $D(\Delta,L^2(\Omega))$ harmonic extension as in Lemma \ref{l:Dq}.
\\
{\it i) $A$ is self-adjoint and non-negative, i.e. it satisfies \eqref{e:qAq=qAp}.}
Indeed, let $p,q\in H^{-1/2}_0(\pOmega)$. 
Then from \eqref{e:uq-weak} with $\bphi=\bthetaup(p)$ we get 
\begin{eqnarray}
\duality{p,Aq}
=
(\bthetaup(p),\bthetaup(q))_{\bH^1_{\alpha,\mu}(\Omega)}
=
\duality{q,Ap},	\label{e:<p,Aq>=<q,Ap>}
\end{eqnarray}
because $\nabla\cdot\bthetaup(p)=\nabla\cdot\bthetaup(q)=0$,
which proves the claim.
\\
{\it ii) 
$A$ is bounded  and satisfies \eqref{e:m<(Aq,q)<M}}.
Indeed, we have
\begin{subequations}
\begin{align}
\|Aq\|^2_{H^{1/2}_0(\partial\Omega)}
&=
\|\bthetaup(q)\cdot\bn\|^2_{H^{1/2}(\partial\Omega)} 
&\mbox{(use \eqref{e:uqh-H1/2~})}
\nonumber\\
&\leq
C_\bn\|\bthetaup(q)\|^2_{\bH^{1/2}(\partial\Omega)}
&\mbox{(use the trace theorem)}
\nonumber\\
&\leq
C_\bn C_t\|{\bthetaup}(q)\|^2_{\bH^1(\Omega)}
&\mbox{(use  \eqref{e:|.|H1amu~|.|H1})}
\nonumber\\
&\leq
C_\bn C_t C_{\alpha,\mu}
\|\bthetaup(q)\|^2_{\bH^1_{\alpha,\mu}(\Omega)}
&\mbox{(use \eqref{e:<p,Aq>=<q,Ap>})}
\nonumber\\
&=
C_\bn C_t C_{\alpha,\mu}
\duality{q,Aq}
\nonumber\\
&\leq
C_\bn C_t C_{\alpha,\mu}
\|q\|_{H^{-1/2}_0(\Omega)}
\|Aq\|_{H^{1/2}_0(\partial\Omega)},\quad\mbox{which implies}
\nonumber\\
\|Aq\|_{H^{1/2}_0(\partial\Omega)}
&\leq 
M\|q\|_{H^{-1/2}_0(\Omega)}\quad\mbox{and}	\label{e:|Aq|<M|q|}
\\
\duality{q,Aq}
&\leq
M\|q\|^2_{H^{-1/2}_0(\Omega)},\quad M=C_\bn C_t C_{\alpha,\mu}.		\label{e:<q,Aq><M|q|2}
\end{align}
\end{subequations}
Then \eqref{e:|Aq|<M|q|} shows that $A$ is bounded, and \eqref{e:<q,Aq><M|q|2} 
proves the right side inequality of \eqref{e:m<(Aq,q)<M}.

For the left side inequality of \eqref{e:m<(Aq,q)<M}, for given 
$q\in H^{-1/2}_0(\pOmega)$ we consider 
$\bthetaup_{div}(q)\in\bH^1_0(\Omega)$ as in (c), Lemma \ref{l:collection}.
Then  we get
\begin{subequations}
\begin{align}
&\hspace*{6mm}
\|q\|^2_{L^2(\Omega)}
&\mbox{(use \eqref{e:div(v)=q,v=g} and \eqref{e:uq-weak})}
\nonumber\\
&=
\int_\Omega q(\nabla\cdot {\bthetaup}_{div}(q))
=
(\bthetaup(q),{\bthetaup}_{div}(q))_{\alpha,\mu}
\nonumber\\
&\leq
\|\bthetaup(q)\|_{\alpha,\mu}\|{\bthetaup}_{div}(q)\|_{\alpha,\mu}
&\mbox{(use \eqref{e:|.|H1amu~|.|H1})}
\nonumber\\
&\leq
c^{-1/2}_{\alpha,\mu}
\|\bthetaup(q)\|_{\alpha,\mu}\|{\bthetaup}_{div}(q)\|_{\bH^1(\Omega)}
&\mbox{(use \eqref{e:div(v)=q,v=g})}
\nonumber\\
&\leq
c^{-1/2}_{\alpha,\mu}C^{1/2}_{div}
\|\bthetaup(q)\|_{\alpha,\mu}\|q\|_{L^2(\Omega)};\quad\mbox{hence}
\nonumber\\
\|q\|^2_{L^2(\Omega)}
&\leq
c^{-1}_{\alpha,\mu}C_{div}
\|\bthetaup(q)\|^2_{\alpha,\mu} 
&\mbox{(use \eqref{e:<p,Aq>=<q,Ap>})}
\nonumber\\
&=
c^{-1}_{\alpha,\mu}C_{div}
\duality{q,Aq}.		\label{e:|q|2<C<q,Aq>}
\end{align}
\end{subequations}
By combining \eqref{e:|q|H-1/2<C|q|L^2(Delta)} with \eqref{e:|q|2<C<q,Aq>}  gives
\begin{eqnarray}
\|q\|^2_{H^{-1/2}_0(\partial\Omega)}
&\leq&
c^{-1}_{\alpha,\mu}C_{div}C(\Omega)
\duality{q,Aq},\;\;\mbox{which implies}
\nonumber\\
m\|q\|^2_{H^{-1/2}(\partial\Omega)/\R}
&\leq&
\duality{q,Aq},\;\;\;\mbox{with}\;\;
m=\frac{c_{\alpha,\mu}}{C_{div}C(\Omega)},	\label{e:m|q|^2<{q,Aq>}}
\end{eqnarray}
and proves left side of \eqref{e:m<(Aq,q)<M}.
\\
{\it iii) The operator $A$ is an isomorphism}. 
Clearly,  \eqref{e:|Aq|<M|q|} shows that $A$ is continuous and 
\eqref{e:m|q|^2<{q,Aq>}} shows that $A$ is one-to-one.
Now, we show that $A$ is onto.
As $A$ is continuous then $Im(A)$ is closed in $H^{1/2}_0(\partial\Omega)$.
Then, it is enough to show that $Im(A)$ is dense in $H^{1/2}_0(\partial\Omega)$.
From \cite[Corollary 1.8]{brezis-2011} this is reduced to prove the following:
assume that a certain $p\in H^{-1/2}_0(\partial\Omega)$ satisfies $\duality{p,Aq}=0$ for all 
$q\in H^{-1/2}_0(\pOmega)$, and then prove that $p=0$ in 
$H^{-1/2}_0(\partial\Omega)$.
Indeed, as $A$ is self-adjoint we get
$\duality{q,Ap}=0$ which implies $Ap=0$ in $H^{1/2}_0(\partial\Omega)$,
or equivalently $\bthetaup(p)\cdot\bn=0$. 
Then \eqref{e:uq-weak} and Lemma \ref{l:|.|H1am~|.|H1} imply $\bthetaup(p)=0$ in $\bH^1(\Omega)$. 
Taking $\bphi\in\bcD(\Omega)$ in \eqref{e:uq-weak} implies 
$\nabla p=0$ in $\bcD'(\Omega)$, so $p$ is constant, or $p=0$ in 
$H^{-1/2}_0(\partial\Omega)$.
\hfill$\Box$

\begin{theorem}{\bf (proof of (d), Theorem \ref{th:main})}\label{th:u=v+omega,Au(q)=wn+g}
The problem \eqref{e:bu-all} has a solution 
$(\bu,p)\in\bH^1(\Omega)\times L^2(\Omega;\Delta)$, where 
$\bu= {\bomega} + \bthetaup(q)$ is unique,
$p=\pi + q$ is unique up to a constant, 
$q$ is the unique solution  in $H^{-1/2}_0(\partial\Omega)$  of
\begin{equation}\label{e:Au(q)=wn+gn}
Aq=({\bomega}-\bg)\cdot\bn),\;\; \mbox{or equivalently}\;\; 
-\bthetaup(q)\cdot\bn=({\bomega}-\bg)\cdot\bn.
\end{equation}
\end{theorem}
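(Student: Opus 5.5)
The plan is to combine parts (a)–(c) directly and then argue uniqueness from the classical theory of the generalized Stokes problem.

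\emph{Step 1: $q$ is well defined.} The data $(\bomega-\bg)\cdot\bn$ must lie in $H^{1/2}_0(\pOmega)$. Since $\bomega\in\bH^1(\Omega)$ and $\bg\in\bH^{1/2}(\pOmega)$, the trace theorem and the $C^{0,1}$ regularity of $\bn$ give $(\bomega-\bg)\cdot\bn\in H^{1/2}(\pOmega)$. For the zero mean, Theorem \ref{th:uf-weak} gives $\nabla\cdot\bomega=0$ in $L^2(\Omega)$, so the divergence theorem yields $\int_{\pOmega}\bomega\cdot\bn\,d\sigma=0$. Combined with the hypothesis $\int_{\pOmega}\bg\cdot\bn\,d\sigma=0$, this places $(\bomega-\bg)\cdot\bn$ in $H^{1/2}_0(\pOmega)$. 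Theorem \ref{th:A} then says $A$ is an isomorphism $H^{-1/2}_0(\pOmega)\to H^{1/2}_0(\pOmega)$, so there is a unique $q$ with $Aq=(\bomega-\bg)\cdot\bn$. We take its harmonic $D(\Delta,L^2(\Omega))$ extension and form $\bthetaup(q)$ by Theorem \ref{th:uq-weak}.

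\emph{Step 2: existence.} Set $\bu=\bomega+\bthetaup(q)$ and $p=\pi+q$.
\begin{itemize}
\item Adding \eqref{e:uf-Om} and \eqref{e:uq-Om} (tested against $\bcD(\Omega)$) and using $\ibf=\bvarpi+\nabla\pi$ gives \eqref{e:bu-Om} in $\bcD'(\Omega)$.
\item By parts (a) and (b), $\nabla\cdot\bu=\nabla\cdot\bomega+\nabla\cdot\bthetaup(q)=0$ in $L^2(\Omega)$.
\item On $\pOmega$, the tangential part is $\bu_\bt=\bg_\bt+0=\bg_\bt$ by \eqref{e:uft-pOm} and \eqref{e:uqt-pOm}.
\item The normal part is $\bu\cdot\bn=\bomega\cdot\bn-Aq=\bomega\cdot\bn-(\bomega-\bg)\cdot\bn=\bg\cdot\bn$.
\end{itemize}
Hence $\bu=\bg$ on $\pOmega$. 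For regularity, $\bu\in\bH^1(\Omega)$; moreover $\pi\in H^1(\Omega)$ and $q\in D(\Delta,L^2(\Omega))$, so $p\in L^2(\Omega)$ with $\Delta p=\nabla\cdot\ibf\in L^2(\Omega)$, which is the stated pressure space.

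\emph{Step 3: uniqueness.} Let $(\bu_1,p_1)$ and $(\bu_2,p_2)$ be two solutions and put $\bv=\bu_1-\bu_2\in\bH^1_0(\Omega)$, $r=p_1-p_2\in L^2(\Omega)$. Then $\nabla\cdot\bv=0$ and $\alpha\bv-\mu\Delta\bv+\nabla r=0$ in $\bcD'(\Omega)$. Since $\bv\in\bH^1_0(\Omega)$, $\bv$ is the $\bH^1_0$-limit of divergence-free fields in $\bcD(\Omega)$; this is the classical density result for bounded Lipschitz domains, see \cite{girault+raviart-1986,temam-2001}. Testing against those fields removes $r$ and gives $\alpha\|\bv\|^2_{\bL^2(\Omega)}+\mu\|\nabla\bv\|^2_{\bL^2(\Omega)}=0$, hence $\bv=0$. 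It follows that $\nabla r=0$, so $r$ is constant and $p$ is unique up to a constant. Finally, uniqueness of $q$ in $H^{-1/2}_0(\pOmega)$ is exactly the injectivity of $A$ from Theorem \ref{th:A}. Equivalently, the decomposition $p=\pi+q$ is determined once $\pi$ is normalized.

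The step needing the most care is the uniqueness argument, because the pressure is only in $L^2$, so one cannot simply test with $\bv$ itself. The density of divergence-free test functions is what removes $r$ from the energy identity. A secondary point is that the sum of \eqref{e:uq-Om} (posed in $\bH^{-1}_\bn$) and \eqref{e:uf-Om} must be read in the distributional sense in $\Omega$, which is legitimate because $\bcD(\Omega)\subset\bH^1_\bn(\Omega)$.
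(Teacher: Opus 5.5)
Your proposal is correct and follows the paper's route: the paper's proof likewise just adds \eqref{e:uf-all} and \eqref{e:uq-all} and uses $Aq=(\bomega-\bg)\cdot\bn$ to recover the normal trace. Your check that $(\bomega-\bg)\cdot\bn\in H^{1/2}_0(\pOmega)$ (via $\nabla\cdot\bomega=0$ and $\int_{\pOmega}\bg\cdot\bn\,d\sigma=0$) and your energy argument for uniqueness supply details the paper leaves implicit. One small correction to your closing remark: since $r\in L^2(\Omega)$, you can test directly with $\bv\in\bH^1_0(\Omega)$, because $\duality{\nabla r,\bv}=-\int_\Omega r\,\nabla\cdot\bv\,d\bx=0$, so the density of divergence-free fields in $\bcD(\Omega)$ is not actually needed there.
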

{\bf Proof}.
The existence of a solution $(\bu,p)$ of \eqref{e:bu-all} is a well-known fact,
\cite{girault+raviart-1986}.
Here a provide a different proof.
Adding the equations \eqref{e:uf-Om} and \eqref{e:uq-Om},
\eqref{e:uft-pOm} and  \eqref{e:uqt-pOm}  combined with \eqref{e:Au(q)=wn+gn},
gives
\begin{align*}
\alpha(\bomega+\bthetaup(q)) - \mu \Delta(\bomega+\bthetaup(q))
+\nabla (\pi + q)
&=\bvarpi + \nabla\pi 
= {\bf f} \;\; in\;\;\Omega,
\\
(\bomega+\bthetaup(q))_\bt
&=\bg_\bt\;\; on\;\; \partial\Omega,
\\
(\bomega+\bthetaup(q))\cdot\bn
&=\bg\cdot\bn\;\; on\;\; \partial\Omega,
\end{align*}
which shows that $\bu=\bomega+\bthetaup(q)$ satisfies \eqref{e:bu-Om} and \eqref{e:bu-pOm}.
Furthermore, $\bu=\bomega+\bthetaup(q)$ satisfies \eqref{e:div(bu)-Om} because 
$\nabla\cdot\bomega=0$ and $\nabla\cdot\bthetaup(q)=0$ in $L^2(\Omega)$.
\hfill$\Box$

\begin{remark}\label{r:u=thetaup+omega}
Theorem \ref{th:u=v+omega,Au(q)=wn+g} shows that the solution $\bu$ of the generalized 
Stokes problem \eqref{e:bu-all} is given as a superposition of  two divergence-free 
vector fields, $\bomega$ and  $\bthetaup(q)$.
Both  $\bomega$ and $\bthetaup(q)$ solve classical elliptic Helmholtz-like equations, 
respectively \eqref{e:uf-all} and \eqref{e:uq-all}.
Contrary to what usually is done, here we do not add a domain constraint or a penalty term  
to count for the divergence-free condition. Instead we impose some appropriate boundary conditions and use the structure of the Helmholtz equation.

More precisely, the decomposition  $\bu=\bomega+\bthetaup(q)$ is driven by the Helmholtz 
decomposition \eqref{e:f=Dp+w} of the external force, $\ibf=\bvarpi + \nabla\pi$.
The rotational ``force" term $\bvarpi$ generates the divergence-free velocity
$\bomega$, while the gradient ``solid pressure" force term $-\nabla q$ 
generates the divergence-free velocity $\bthetaup(q)$.
The divergence-free of both velocities $\bomega$ and $\bthetaup(q)$ follows from 
the structure of the Helmholtz equation, divergence-free nature of $\bvarpi$ and $-\nabla q$,
and the appropriate boundary conditions for $\bomega$ and $\bthetaup(q)$.

Regardless the ``solid boundary pressure" term $q$,
$\bu=\bomega+\bthetaup(q)$ solves \eqref{e:bu-Om}, \eqref{e:div(bu)-Om} and the tangential component of \eqref{e:bu-pOm}. The role of $q$ is to simply adjust the field 
$\bthetaup(q)$ so that $\bu=\bomega+\bthetaup(q)$
satisfies even the normal component of\eqref{e:bu-pOm}.
\end{remark}

\section{Numerical solution of the generalized Stokes problem \eqref{e:bu-all}}\label{sec:alg}
Theorem \eqref{th:u=v+omega,Au(q)=wn+g} shows that we can solve \eqref{e:bu-all} equivalently by first solving \eqref{e:uf-all}, and next \eqref{e:uq-all} with $q$ the solution of \eqref{e:Au(q)=wn+gn}. For given $\bvarpi$ and $q$, both these problems are Helmholtz-like equations and can be solved efficiently by a variety of methods.
The equation \eqref{e:Au(q)=wn+gn} is then the most important element of the method.
We can solve \eqref{e:Au(q)=wn+gn} directly of iteratively, which we will discuss  below.

\subsection{Direct method}
Let  $(\varphi_i)$ be a Riesz basis of $H^{-1/2}_0(\pOmega)$, \cite[Chapter 3]{christensen-2016}. Then \eqref{e:Au(q)=wn+gn} is equivalently~formulated~as 
\begin{equation}\label{e:Aq=w-g,weak}
\duality{\varphi_i,Aq}=\duality{\varphi_i,(\bomega-\bg)\cdot\bn},\quad \forall i\in\N.
\end{equation}
We look for $q=\sum_{k=1}^\infty c_k\varphi_k$, where 
 $(c_k)\in\ell^{-1/2}_0$,
\begin{equation}
\ell^{-1/2}_0 =
\left\{ (\alpha_k),\;\;
a\sum_{k=1}^\infty|\alpha_k|^2 
\leq
\left\|\sum_{k=1}^\infty \alpha_k\varphi_k\right\|^2_{H^{-1/2}(\pOmega)}
\leq 
b\sum_{k=1}^\infty|\alpha_k|^2,\quad
\sum_{k=1}^\infty \alpha_k\duality{\varphi_k,1}=0
\right\},
\end{equation}
with $a,b>0$  independent of $(\alpha_k)$. 
The convergence of the series $q=\sum_{k=1}^\infty c_k\varphi_k$ is in the sense that 
$\lim_{l\to\infty}(q-s_l(q))=0$ in $H^{-1/2}_0(\pOmega)$, where $s_l(q):=\sum_{k=1}^l c_k\varphi_k$.
Clearly the coefficients $(c_k)$ satisfy 
\begin{equation}\label{e:c^k;1,inf}
\sum_{k=1}^\infty 
c_k\duality{\varphi_i,A\varphi_k} 
=
\duality{\varphi_i,(\bomega-\bg)\cdot\bn},\quad \forall i\in\N.
\end{equation}
We look for an approximated solution of \eqref{e:c^k;1,inf}.
This approach is connected to the finite element approximation of \eqref{e:bu-all}, because every 
finite set of linearly independent elements of $H^{-1/2}_0(\pOmega)$, which in numerical 
applications is finite element basis, can be completed to a Riesz basis.

We consider the finite version of \eqref{e:c^k;1,inf}.
For given $l\in\N$, we look for $q_l:=\sum_{k=1}^l c_{k,l}\varphi_{k}$ satisfying
\begin{equation}\label{e:c^k_l;1,l}
\sum_{k=1}^l 
c_{k,l} \duality{\varphi_i,A\varphi_k} 
=
\duality{\varphi_i,(\bomega-\bg)\cdot\bn},\quad \forall i=1,\ldots,l.
\end{equation}
The following theorem states that the ``finite element approximation" \eqref{e:c^k_l;1,l} of \eqref{e:c^k;1,inf} converges to the solution of 
\eqref{e:c^k;1,inf}.
\begin{theorem}\label{th:|pl-ql|->0}
For given $l\in\N$, the problem \eqref{e:c^k_l;1,l} has a unique solution 
$(c_{k,l})_{k=1}^l$ in 
$\ell^{-1/2}_{0,l}:=\ell^{-1/2}_{0}\cap\{(\alpha_k),\; \alpha_{l+1}=\cdots=0\}$.
Furthermore
\begin{subequations}\label{e:|*l-*|->0}
\begin{align}
\|q_l - q\|_{H^{-1/2}_0(\pOmega)}
&\leq
\frac{M}{m}\|q - s_l(q)\|_{H^{-1/2}_0(\pOmega)}, 		\label{e:|ql-q|->0}
\\
\|\bu_l - \bu\|_{H^{1}(\Omega)}
&\leq
\frac{C_{\alpha,\mu}^{1/2}M^{3/2}}{m}\|q - s_l(q)\|_{H^{-1/2}_0(\pOmega)},
\quad 
\mbox{where}\;\;\bu_l=\bomega+\bthetaup(q_l). 	\label{e:|ul-u|->0}
\end{align}
\end{subequations}
\end{theorem}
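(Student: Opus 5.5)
This is a Galerkin/Céa argument for the bounded, coercive, symmetric form $a(p,q):=\duality{p,Aq}$ on $V_l:={\rm span}\{\varphi_1,\ldots,\varphi_l\}\cap H^{-1/2}_0(\pOmega)$. The finite system \eqref{e:c^k_l;1,l} is then the Galerkin projection of $Aq=(\bomega-\bg)\cdot\bn$ onto $V_l$.

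\textbf{Existence and uniqueness.} By (c) of Theorem \ref{th:main}, $a$ is symmetric, bounded and coercive on $V_l$. Indeed, \eqref{e:qAq=qAp} gives symmetry, \eqref{e:m<(Aq,q)<M} gives coercivity, and \eqref{e:|Aq|<M|q|} gives $|a(p,q)|\le M\|p\|\,\|q\|$. The Gram-type matrix $[\duality{\varphi_i,A\varphi_k}]$ restricted to $V_l$ is therefore symmetric positive definite, and the system has a unique solution. Equivalently, apply Lax--Milgram on the finite-dimensional subspace. The Riesz-basis lower bound in the definition of $\ell^{-1/2}_0$ identifies coefficient sequences with elements of $V_l$ injectively.

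\textbf{Estimate \eqref{e:|ql-q|->0}.} Subtracting \eqref{e:c^k_l;1,l} from \eqref{e:Aq=w-g,weak} gives Galerkin orthogonality, $\duality{v,A(q-q_l)}=0$ for all $v\in V_l$. Coercivity then gives
\[
m\|q-q_l\|^2\le \duality{q-q_l,A(q-q_l)}=\duality{q-s_l(q),A(q-q_l)}\le M\|q-s_l(q)\|\,\|q-q_l\|,
\]
using $s_l(q)-q_l\in V_l$. Dividing yields \eqref{e:|ql-q|->0}.

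\textbf{Main obstacle.} The delicate point is checking that $s_l(q)$ itself lies in $V_l$, that is, $\duality{s_l(q),1}=0$. The constraint $\sum\alpha_k\duality{\varphi_k,1}=0$ only concerns the full series. I would handle this by choosing the comparison element as $s_l(q)$ corrected by a mean-zero adjustment. Alternatively, I would interpret $H^{-1/2}_0$ as the quotient space, as the paper does when it writes $H^{-1/2}(\pOmega)/\R$, in which case the issue disappears.

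\textbf{Estimate \eqref{e:|ul-u|->0}.} By linearity, $\bu_l-\bu=\bthetaup(q_l-q)$. Lemma \ref{l:|.|H1am~|.|H1} and \eqref{e:<p,Aq>=<q,Ap>} give
\[
\|\bthetaup(r)\|^2_{\bH^1}\le C_{\alpha,\mu}\|\bthetaup(r)\|^2_{\alpha,\mu}=C_{\alpha,\mu}\duality{r,Ar}\le C_{\alpha,\mu}M\|r\|^2
\]
for $r=q_l-q$. Combining this with \eqref{e:|ql-q|->0} gives the constant $C_{\alpha,\mu}^{1/2}M^{1/2}\cdot M/m$, which is exactly the stated one.
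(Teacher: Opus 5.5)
Your proposal is correct and follows the paper's proof essentially step for step: Galerkin orthogonality from subtracting the infinite and truncated systems, tested against $q_l-s_l(q)$ (the paper multiplies by $c_{i,l}-c_i$ and sums), then coercivity and boundedness of $A$, and for the velocity the chain $\|\bthetaup(q_l-q)\|^2_{\bH^1(\Omega)}\le C_{\alpha,\mu}\duality{q_l-q,A(q_l-q)}\le C_{\alpha,\mu}M\|q_l-q\|^2_{H^{-1/2}_0(\pOmega)}$. The mean-zero point you flag is passed over silently in the paper, and it is not a real obstacle: each $\varphi_k$ belongs to the Riesz basis of $H^{-1/2}_0(\pOmega)$, so $s_l(q)$ lies in ${\rm span}\{\varphi_1,\ldots,\varphi_l\}\subset H^{-1/2}_0(\pOmega)$ automatically; your quotient-space reading is the right one, whereas correcting $s_l(q)$ by a mean-zero adjustment would generally not preserve the exact constant $M/m$.
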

{\bf Proof}.
The existence and uniqueness of a solution to \eqref{e:c^k_l;1,l}  holds because
the matrix $[\duality{\varphi_i,A\varphi_k}]_{i,k=1}^l$
is symmetric definite-positive in $\ell^{-1/2}_{0,l}$.
Substracting  \eqref{e:c^k;1,inf}  from \eqref{e:c^k_l;1,l} gives
\[%
\duality{\varphi_i,A(q_l-q)}
=
0,
\]%
which after multiplying it by $c_{i,l}-c_i$ and summing the result for $i=1,\ldots,l$ gives
\begin{equation}\label{e:ckl-ck}
\duality{q_l-q,A(q_l-q)}
=
\duality{s_l(q)-q,A(q_l-q)}.
\end{equation}
Taking into account \eqref{e:m|q|^2<{q,Aq>}} and \eqref{e:|Aq|<M|q|},
from \eqref{e:ckl-ck} we get
\begin{eqnarray*}\label{e:m|pl-ql|^2<}
m\|q_l-q\|^2_{H^{-1/2}_0(\pOmega)}
&\leq&
\duality{q_l-q,A(q_l-q)}
\\
&=&
\duality{s_l(q)-q,A(q_l-q)}
\\
&\leq&
M
\|q-s_l(q)\|_{H^{-1/2}_0(\pOmega)}
\|q_l-q\|_{H^{-1/2}_0(\pOmega)},
\end{eqnarray*}
which after simplifying by $\|q_l-q\|_{H^{-1/2}_0(\pOmega)}$ proves 
\eqref{e:|ql-q|->0}.

For \eqref{e:|ul-u|->0} we note that $\bu_l-\bu=\bthetaup(q_l)-\bthetaup(q)$, and then from
\eqref{e:|.|H1amu~|.|H1} and \eqref{e:uq-weak} with $q_l-q$ instead of $q$ and
with $\bphi=\bthetaup(q_l)-\bthetaup(q)$, and \eqref{e:<q,Aq><M|q|2} we get
\begin{align*}
\|\bu_l-\bu\|^2_{\bH^1(\Omega)}
&=
\|\bthetaup(q_l)-\bthetaup(q)\|^2_{\bH^1(\Omega)}
\leq
C_{\alpha,\mu}
\|\bthetaup(q_l)-\bthetaup(q)\|^2_{\bH^1_\bn(\Omega)}
=
C_{\alpha,\mu}
\duality{q_l-q,A(q_l-q)}
\\
&\leq
C_{\alpha,\mu}M
\|q_l-q\|^2_{H^{-1/2}(\pOmega)/\R},
\end{align*}
which combined with \eqref{e:|ql-q|->0} proves \eqref{e:|ul-u|->0}.
\hfill$\Box$

\subsection{Iterative method}\label{ss:iterative}
We will show that we can solve  $Aq=(\bomega-\bg)\cdot\bn$ with a gradient, or conjugate gradient method. We introduce 
\begin{equation}\label{e:a(p,q),b(q)}
a(p,q)=\duality{p,Aq},
\quad
b(q)=\duality{q,(\bomega-\bg)\cdot\bn},
\quad
j(q)=\frac{1}{2}a(q,q)-b(q),
\quad
p,q\in H^{-1/2}_0(\partial\Omega).
\end{equation}
\begin{lemma}\label{l:e->min}
The followings hold.
\begin{enumerate}
\item[(a)]
The functional $j(q)$ is differentiable in $H^{-1/2}_0(\pOmega)$ and 
its derivative $j'(q)\in (H^{-1/2}_0(\pOmega))'$ is given by
\begin{equation}\label{e:j'(q)}
\duality{j'(q),p}
=
\duality{p,A(q)-(\bomega-\bg)\cdot\bn},\quad \forall p\in H^{-1/2}_0(\pOmega),
\end{equation}
\item[(b)]
Let $(p,q)_{H^{-1/2}_0(\Omega)}$ be the inner-product in $H^{-1/2}_0(\Omega)$, see 
\eqref{e:(p,q)_H{-1/2}}, and define $g(q)\in H^{-1/2}_0(\pOmega)$ by
\begin{equation}\label{e:g(q)}
(g(q),p)_{H^{-1/2}_0(\Omega)}
:=
\duality{j'(q),p},\quad \forall p\in H^{-1/2}_0(\pOmega).
\end{equation}
Then $g(q)=\cR^{-1}(A(q)-(\bomega-\bg)\cdot\bn)=\partial_\bn D(A(q)-(\bomega-\bg)\cdot\bn)$,
where $\cR$ is the Rieasz isomorphism and $D$ is the operator defined in Lemma \ref{l:R,R-1}.
\item[(c)]
Lastly, the functional $j$ achieves its minimum in $H^{-1/2}_0(\partial\Omega)$
at a unique $q$, which is the unique solution in 
$H^{-1/2}_0(\partial\Omega)$ of $Aq=(\bomega-\bg)\cdot\bn$.
\end{enumerate}
\end{lemma}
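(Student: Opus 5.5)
The lemma is the standard variational characterization of a linear equation with a symmetric coercive operator, and I would prove the three parts in order using only Theorem \ref{th:A} and Lemma \ref{l:R,R-1}.

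For (a), fix $q,p\in H^{-1/2}_0(\pOmega)$ and expand $j(q+tp)$. Bilinearity of $a$ gives
\[
j(q+tp)-j(q)=\tfrac{t}{2}\bigl(a(q,p)+a(p,q)\bigr)-t\,b(p)+\tfrac{t^2}{2}a(p,p).
\]
By the symmetry \eqref{e:qAq=qAp}, $a(q,p)=a(p,q)=\duality{p,Aq}$. Hence
\[
j(q+p)-j(q)-\duality{p,Aq-(\bomega-\bg)\cdot\bn}=\tfrac12 a(p,p).
\]
By \eqref{e:m<(Aq,q)<M} this remainder is bounded by $\tfrac M2\|p\|^2_{H^{-1/2}_0(\pOmega)}=o(\|p\|)$. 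Next, $p\mapsto\duality{p,Aq-(\bomega-\bg)\cdot\bn}$ is a continuous linear functional. This uses $Aq\in H^{1/2}_0(\pOmega)$ and the boundedness \eqref{e:|Aq|<M|q|}, together with $(\bomega-\bg)\cdot\bn\in H^{1/2}(\pOmega)$ by the trace theorem. Together these show Fr\'echet differentiability with the derivative \eqref{e:j'(q)}.

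There is one point to check here. The pairing with $H^{-1/2}_0$ requires the right-hand side to have zero mean, or the pairing to be understood modulo constants. We have $\int_{\pOmega}\bomega\cdot\bn=\int_\Omega\nabla\cdot\bomega=0$ by Theorem \ref{th:uf-weak}, and $\int_{\pOmega}\bg\cdot\bn=0$ by assumption. So $(\bomega-\bg)\cdot\bn\in H^{1/2}_0(\pOmega)$.

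For (b), write $r=Aq-(\bomega-\bg)\cdot\bn\in H^{1/2}_0(\pOmega)$. The defining property \eqref{e:Riesz} of the Riesz isometry, restricted to the zero-mean spaces as in Lemma \ref{l:R,R-1}, gives
\[
\duality{p,r}=(p,\cR^{-1}(r))_{H^{-1/2}_0(\pOmega)}\quad\text{for all } p.
\]
Comparing this with \eqref{e:g(q)}, uniqueness of the Riesz representative yields $g(q)=\cR^{-1}(r)$. Lemma \ref{l:R,R-1} then identifies $\cR^{-1}=\partial_\bn D$.

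For (c), $a(\cdot,\cdot)$ is a continuous, symmetric, coercive bilinear form on the Hilbert space $H^{-1/2}_0(\pOmega)$ by Theorem \ref{th:A}, and $b$ is a continuous linear form. I would use the identity
\[
j(q+p)=j(q)+\duality{j'(q),p}+\tfrac12 a(p,p).
\]
If $Aq=(\bomega-\bg)\cdot\bn$, then $j'(q)=0$, so $j(q+p)-j(q)=\tfrac12 a(p,p)\ge \tfrac m2\|p\|^2$, which is strictly positive for $p\neq0$. Thus $q$ is the unique global minimizer. Such a $q$ exists and is unique by Theorem \ref{th:u=v+omega,Au(q)=wn+g}, since $A$ is an isomorphism. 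Conversely, any minimizer $q^*$ has vanishing derivative, so $Aq^*=(\bomega-\bg)\cdot\bn$ by (a) and it coincides with $q$. Alternatively, existence of the minimizer follows directly from the Lax--Milgram / Riesz theorem, since $j$ is strictly convex, continuous and coercive.

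The only mildly delicate step is the zero-mean compatibility in (a); the rest is routine.
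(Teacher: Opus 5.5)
Your proposal is correct and follows essentially the same route as the paper: (a) by direct expansion of the quadratic functional using the symmetry and boundedness of $A$; (b) via the Riesz identification $\cR^{-1}=\partial_\bn D$ from Lemma \ref{l:R,R-1}; and (c) by the symmetric Lax--Milgram argument, since your identity $j(q+p)=j(q)+\duality{j'(q),p}+\tfrac12 a(p,p)$ is exactly the proof of the cited Brezis corollary, with existence supplied by the isomorphism property of $A$ from Theorem \ref{th:A}. Your explicit check that $(\bomega-\bg)\cdot\bn\in H^{1/2}_0(\pOmega)$, which the paper leaves implicit, is a worthwhile addition.
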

{\bf Proof}.
The differentiability of $j$ and the formula \eqref{e:j'(q)} folow by straightforward 
calculations.
The claim (b)  follows from Lemma \ref{l:R,R-1}.
The claim (c) follows from Lax-Milgram lemma, \cite[Corollary 5.8]{brezis-2011},
because in view \eqref{e:m<(Aq,q)<M}, the bilinear form $a(p,q)$ is continuous and coercive
in $H^{-1/2}_0(\partial\Omega)$, and $b(q)$ is continuous in $H^{-1/2}_0(\partial\Omega)$.
\hfill$\Box$

\begin{proposition}\label{p:alg,convergence}
Let $(q_k)$ be the sequence associated to the conjugate gradient method applied to the functional 
$q\mapsto j(q)$  given as follows, see \cite[Section 4.3.3]{bristeau+al-1987}.
\begin{enumerate}\label{e:alg}
\item[0)] {\it Initialization:}
$k=0$, $q_0\in H^{-1/2}_0(\partial\Omega)$ given.\\
Compute $g_0\in H^{-1/2}_0(\pOmega)$ as given by by $g_0=\partial_\bn D(Aq_0 - (\bomega-\bg)\cdot\bn)$.
If $\|g_0\|_{H^{-1/2}_0(\pOmega)}=0$ (or small enough) take~$q=q_0$~and~stop. 
Otherwise set $w_0=g_0$.

\noindent
For $k=0,1,2,\ldots$ assume $(q_k,g_k,w_k)$ are known and compute
 $(q_{k+1},g_{k+1},w_{k+1})$ as follows.
\item[k)] {\it Descent, convergence test and new descent:}
\begin{enumerate}
\item {\it Descent:} compute 
\begin{equation}\label{e:rho_k}\\
\rho_k
=
\frac{(w_k,g_k)_{H^{-1/2}_0(\pOmega)}}{a(w_k,w_k)}
=
-\frac{\duality{w_k,N(g_k)}}{\duality{w_k,\bthetaup(w_k)\cdot\bn}}
\end{equation}
and take
\begin{equation}\label{e:q_{k+1}}
q_{k+1}
=
q_k - \rho_k w_k.
\end{equation}
\item {\it Convergence test:}
compute $g_{k+1}\in H^{-1/2}_0(\pOmega)$ by
\begin{equation}\label{e:g_{k+1}}
g_{k+1}=g_k - \rho_k\partial_\bn D(Aq_k)=g_k+\rho_k\partial_\bn D(\bthetaup(q_k)\cdot\bn)).
\end{equation}
If $\|g_{k+1}\|_{H^{-1/2}_0(\pOmega)}=0$ (or  small enough) stop.
\item {\it Recalculate the descent:}
compute 
\begin{equation}\label{e:gamma_k}
\gamma_k 
= \frac{\|g_{k+1}\|^2_{H^{-1/2}_0(\pOmega)}}
       {\|g_{k}\|^2_{H^{-1/2}_0(\pOmega)}}
= \frac{\duality{g_{k+1}, N(g_{k+1})}}
       {\duality{g_{k},N(g_k)}},
\end{equation}
and define
\begin{equation}\label{e:w_{k+1},g_{k+1}}
w_{k+1}=\gamma_k w_k,\quad
g_{k+1}=\gamma_k w_k.
\end{equation}
Set $k=k+1$ and go to step k),
\end{enumerate}
\end{enumerate}
where the operators $D$ and $N$ are defined in Lemma \ref{l:R,R-1}.
\noindent
Then $(q_k)$ converges to the solution $q$ of $Aq=(\bomega-\bg)\cdot\bn$ with convergence rate 
\begin{equation}
\|q_k-q\|_{H^{-1/2}_0(\pOmega))}
\leq 
C(q_0)\left(\frac{\sqrt{\kappa}-1}{\sqrt{\kappa}+1}\right)^k,\quad k=1,2,\ldots,
\end{equation}
where $C(q_0)>0$ and $\kappa$ is the 
condition number of the operator $\cR^{-1} A$, with $1\leq\kappa\leq Mm^{-1}$
and $m$, $M$ given in (c), Theorem \ref{th:main}.
\end{proposition}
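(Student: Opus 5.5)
The plan is to reduce Proposition \ref{p:alg,convergence} to the classical convergence theory of the conjugate gradient method for a bounded, self-adjoint, coercive operator on a Hilbert space. We work in the Hilbert space $H=H^{-1/2}_0(\pOmega)$ with the inner product \eqref{e:(p,q)_H{-1/2}}. We define $B:=\cR^{-1}A:H\to H$ and the right-hand side $b:=\cR^{-1}((\bomega-\bg)\cdot\bn)\in H$.

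First, by \eqref{e:Riesz}, $(Bq,p)_{H}=\duality{p,Aq}=a(p,q)$. Hence \eqref{e:qAq=qAp} shows that $B$ is self-adjoint in $H$. Also \eqref{e:m<(Aq,q)<M} gives
\[
m\|q\|^2_H\le (Bq,q)_H\le M\|q\|^2_H ,
\]
so the spectrum of $B$ lies in $[m,M]$. Its condition number $\kappa=\|B\|\,\|B^{-1}\|$ therefore satisfies $1\le\kappa\le M/m$.

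Second, by Lemma \ref{l:e->min}(b) and Lemma \ref{l:R,R-1}, the gradient of $j$ in $H$ is $g(q)=Bq-b$. I will check that the iteration in the statement is exactly the Hestenes--Stiefel CG recursion for $Bq=b$ in $H$:
\begin{itemize}
\item $g_{k+1}=g_k-\rho_k Bw_k$;
\item $\rho_k=(g_k,w_k)_H/(Bw_k,w_k)_H$;
\item $w_{k+1}=g_{k+1}+\gamma_k w_k$.
\end{itemize}
The printed updates have typos: $\partial_\bn D(Aq_k)$ should be $\partial_\bn D(Aw_k)$, and the last line should read $w_{k+1}=g_{k+1}+\gamma_kw_k$. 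The alternative expressions for $\rho_k$ and $\gamma_k$ use $(p,g)_H=\duality{p,\cR g}=\duality{p,N(g)}$ from Lemma \ref{l:R,R-1}, together with $a(w,w)=-\duality{w,\bthetaup(w)\cdot\bn}$.

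Third, the convergence estimate. By induction, the residuals $g_k$ are $H$-orthogonal and the directions $w_k$ are $B$-conjugate. Consequently $q_k$ minimizes the energy norm $\|e\|_B^2:=(Be,e)_H$ of the error $e=q_k-q$ over the affine Krylov space
\[
q_0+\mathrm{span}\{g_0,Bg_0,\dots,B^{k-1}g_0\}.
\]
It follows that for every real polynomial $P_k$ of degree $k$ with $P_k(0)=1$,
\[
\|q_k-q\|_B\le\max_{\lambda\in\sigma(B)}|P_k(\lambda)|\,\|q_0-q\|_B .
\]
The spectral theorem for the bounded self-adjoint operator $B$ justifies this in infinite dimensions. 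Choosing $P_k$ as the shifted, scaled Chebyshev polynomial on $[\lambda_{\min},\lambda_{\max}]\subset[m,M]$ gives the factor $2\bigl(\frac{\sqrt\kappa-1}{\sqrt\kappa+1}\bigr)^k$. Finally we convert norms using $m\|e\|_H^2\le\|e\|_B^2\le M\|e\|_H^2$. This yields the claim with $C(q_0)=2\sqrt{M/m}\,\|q_0-q\|_H$, and $q$ is the unique solution from Lemma \ref{l:e->min}(c).

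The main obstacle is the infinite-dimensional setting. CG need not terminate, and the orthogonality and Krylov-minimization arguments must be done with the spectral measure of $B$ rather than with eigenvector expansions. The two issues are handled as follows:
\begin{itemize}
\item The polynomial bound is stated via the functional calculus, $\|P_k(B)\|=\sup_{\sigma(B)}|P_k|$, so no eigenvector expansion is needed.
\item Breakdown ($g_k=0$) just means exact convergence, which is the stopping test.
\end{itemize}
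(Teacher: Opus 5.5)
Your proposal is correct and follows the same route as the paper. Both identify $H^{-1/2}_0(\pOmega)$ with its Riesz inner product, take $a(p,q)=\duality{p,Aq}$, and use $\cR^{-1}A$ as a self-adjoint coercive operator with spectrum in $[m,M]$, then invoke standard conjugate gradient convergence theory: the paper simply cites Bristeau--Glowinski--P\'eriaux, while you unpack it via Krylov optimality, the spectral calculus and Chebyshev polynomials. Your reading of the printed updates as typos is also right ($\partial_\bn D(Aw_k)$ in place of $\partial_\bn D(Aq_k)$, and $w_{k+1}=g_{k+1}+\gamma_k w_k$ in place of the last line), and the cited theorem requires exactly this corrected recursion.
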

{\bf Proof}.
The theorem follows from \cite[Section 4.3.3]{bristeau+al-1987}, where we take :
\\
\begin{tabular}{lp{170mm}}
$\bullet$& 
$(H^{-1/2}_0(\pOmega), (p,q)_{H^{-1/2}_0(\pOmega)})$ satisfying
\[
(p,q)_{H^{-1/2}_0(\pOmega)}
=(\cR(p),\cR(q))_{H^{1/2}_0(\pOmega)}
=\duality{p,\cR(q)},\quad
\cR(p)=N(p),\;\; \cR^{-1}(v) = \partial_\bn D(v),
\]
instead of $(V,(\cdot,\cdot)_V)$,
\\
$\bullet$&
$a(p,q)=\duality{p,Aq}$,
\\
$\bullet$&
$\cR^{-1} A:H^{-1/2}_0(\pOmega)\mapsto H^{-1/2}_0(\pOmega)$, instead of the operator $A$ of \cite[Section 4.3.3]{bristeau+al-1987}, so that 
$a(p,q)=\duality{p,A q)}=(p,\cR^{-1}A(q))_{H^{-1/2}_0(\pOmega)}$, and
\\
$\bullet$& 
$\duality{p,(\bw-\bg)\cdot\bn}$ instead of $L(p)$.
\end{tabular}

\noindent
We note that the coercivity of $\cR^{-1}A$ follows from \eqref{e:m<(Aq,q)<M},
and that \eqref{e:m<(Aq,q)<M} shows that the spectrum of $\cR^{-1}A$ is included in $[m,M]$,
which implies $1\leq\kappa\leq Mm^{-1}$.
\hfill$\Box$

\section{Numerical results}\label{sec:num-results}
In an upcoming paper we will present a detailed numerical analysis and the numerical 
implementation of our method, with applications to generalized Stokes equations and unsteady Navier-Stokes equations in dimension two and three.
In this section we present some numerical results for some two and three dimensional classical benchmarks problems to demonstrate our method.
All the numerical results are obtained based on the iterative approach,
as described in Subsection \ref{ss:iterative}.
We have used \texttt{FreeFEM}, an open-source PDE solver based on finite element method, see \cite{hecht-2012}.
We have observed that by using the conjugate gradient algorithm in Proposition \ref{p:alg,convergence} or the \texttt{LinearCG} function applied to the discrete functional
$j(q)$ gives very similar results.
 
In all the cases we have used a $(P_2,P_2)$ in 2d, or $(P_2,P_2,P_2)$ in 3d, finite element approximation for the velocity and a $P_2$ approximation for the pressure. 
The domain is always meshed by discretizing its boundary with  $n$ 
points, and next discretizing the domain in triangles/tetrahedrons.
The quantity $h=1/n$ refers to the mesh size of the discretization.

\subsection{The 2D Kovasznay test problem}
We consider the two-dimensional trigonometric Kovasznay problem given by \eqref{e:bu-all},
with
\begin{subequations}\label{e:2d-k}
\begin{align*}
\alpha&=0,\quad
\mu=1,\quad m\in\N,
\\
\ibf(x,y)&=
 \left(2m^2\cos(mx)\sin(my) + \frac{1}{2}m\sin(2mx),
 -2m^2\sin(mx)\cos(my) + \frac{1}{2}m\sin(2my)\right).
\end{align*}
\end{subequations}
The exact solution is given by
\[
 \bu(x,y)=\left(\cos(mx)\sin(my), -\sin(mx)\cos(my)\right),\quad
 p(x,y)= -\frac{1}{4}\left(\cos(2mx) + \cos(2my)\right).
\]
The approximated velocity, resp. pressure, will be denoted by $\bu_{n,m}$, resp.
pressure $p_{n,m}$.

\subsubsection*{The case of a disk}
\noindent
Here $\Omega$ is a disk with radius $\pi$, with $n=100$ discretization points on the boundary, which corresponds to $h=2\pi^2/100\approx 0.2$, and $m=1$.
In Figure \ref{f:k-disk-100,1} we plot (from left to right, top to bottom) 
the mesh,  $p_{n,m}$, $\bu_{n,m}$ and the isolines of $p_{n,m}$ for  $(n,m)=(100,1)$.
\begin{figure}[!h]
\noindent
\hspace*{-10mm}
\includegraphics[scale=0.7]{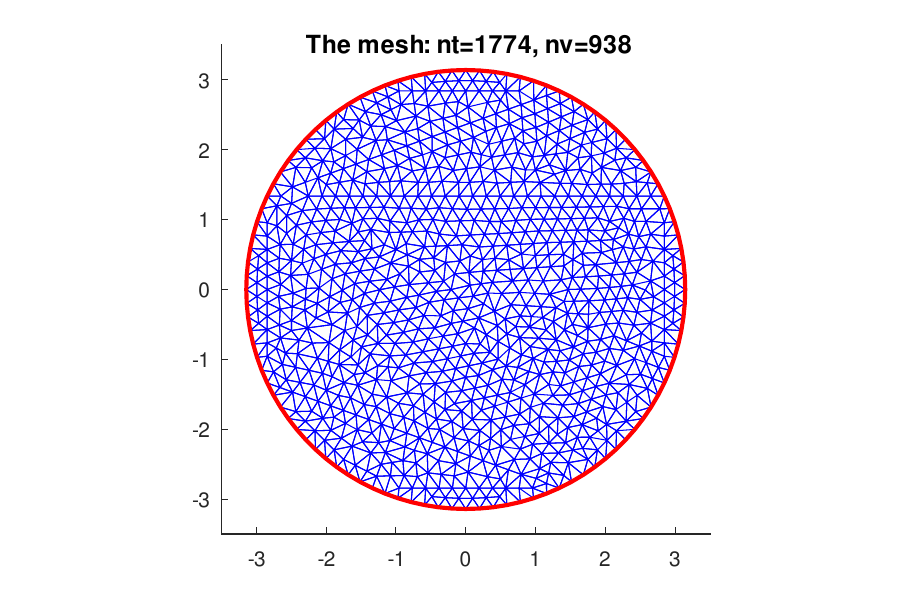} 
\hspace*{-25mm}
\includegraphics[scale=0.75]{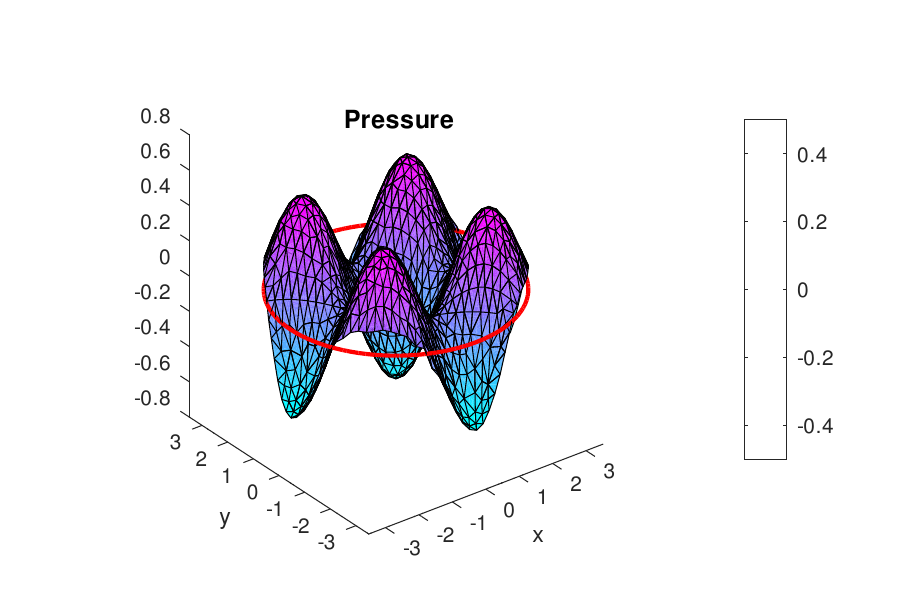} 
\\
\noindent
\hspace*{-28mm}
\includegraphics[scale=0.85]{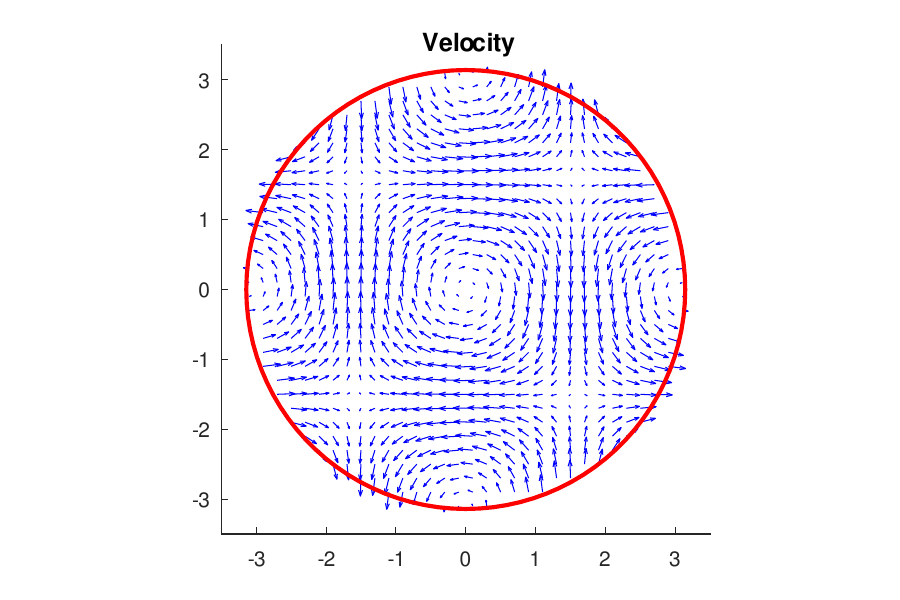} 
\hspace*{-30mm}
\includegraphics[scale=0.85]{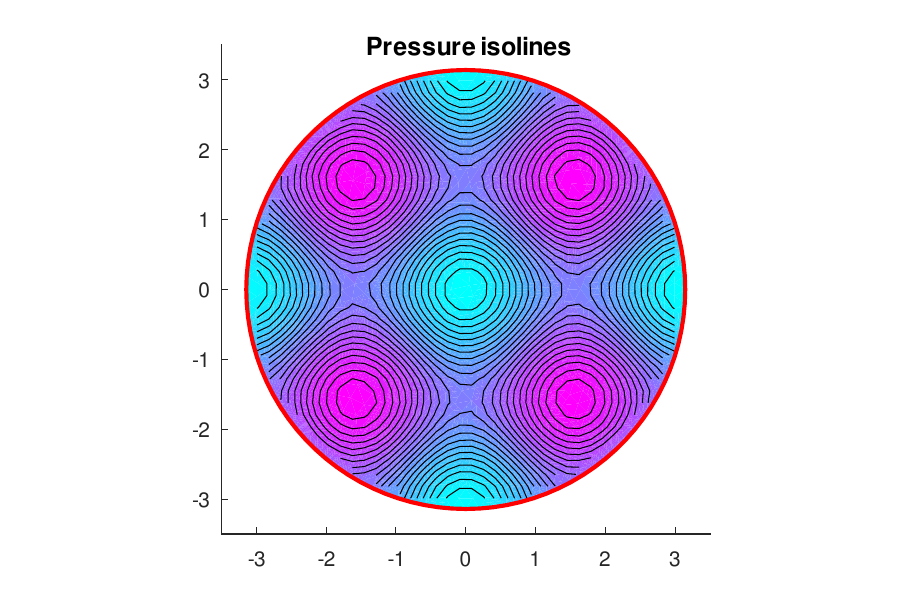} 
\caption{Results for Kovasznay problem with $\Omega=B(0,\pi)$, 
$n=100$ boundary points (so $h=0.19736$) and $m=1$. 
Here 
$\|\bu_{n,m}-\bu\|_{L^\infty(\Omega)}/\|\bu\|_{L^\infty(\Omega)}=1.0\cdot 10^{-3}$,
$\|p_{n,m}-p\|_{L^\infty(\Omega)}/\|p\|_{L^\infty(\Omega)}= 6.1\cdot10^{-2}$,
$\|\nabla\cdot\bu_{n,m}\|_{L^\infty(\Omega)}=1.9\cdot 10^{-2}$,
$\|\nabla\cdot\bu_{n,m}\|_{L^2(\Omega)}=6.2\cdot 10^{-3}$.
}
\label{f:k-disk-100,1}
\end{figure}

\subsubsection*{The case of a general smooth domain}
\noindent
Here $\Omega$ is a perturbation of the disk $B(0,\pi)$ with $\partial\Omega$ given by
\begin{equation}\label{e:T(B,pi)}
\begin{array}{rcl}
\partial\Omega
&=&\{(x(t),y(t)),\quad
\ \ x(t)=\pi\cos(t)(1+0.1\sin(t)\sin(t)),\\
&& \hspace*{29mm}
   y(t)=\pi \sin(t)(0.7+0.1\cos(4t)^3),\quad t\in[0,2\pi]\}.
\end{array}
\end{equation}
In Figure \ref{f:k-gen-300,2} we plot (from left to right, top to bottom) 
the mesh,  $p_{n,m}$, $\bu_{n,m}$ and the isolines of $p_{n,m}$ for $(n,m)=(300,2)$.

\begin{figure}[!h]
\noindent
\hspace*{-18mm}
\includegraphics[scale=0.75]{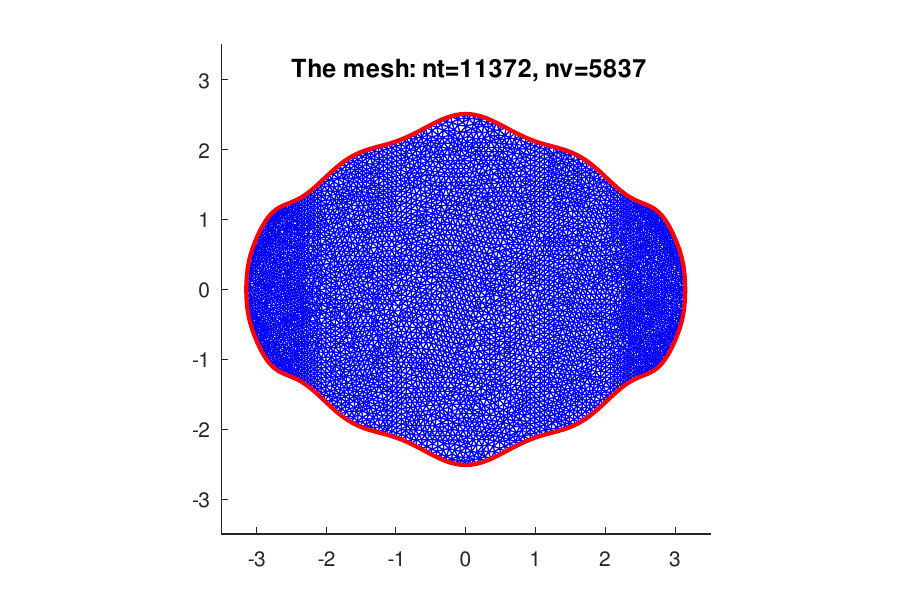} 
\hspace*{-28mm}
\includegraphics[scale=0.75]{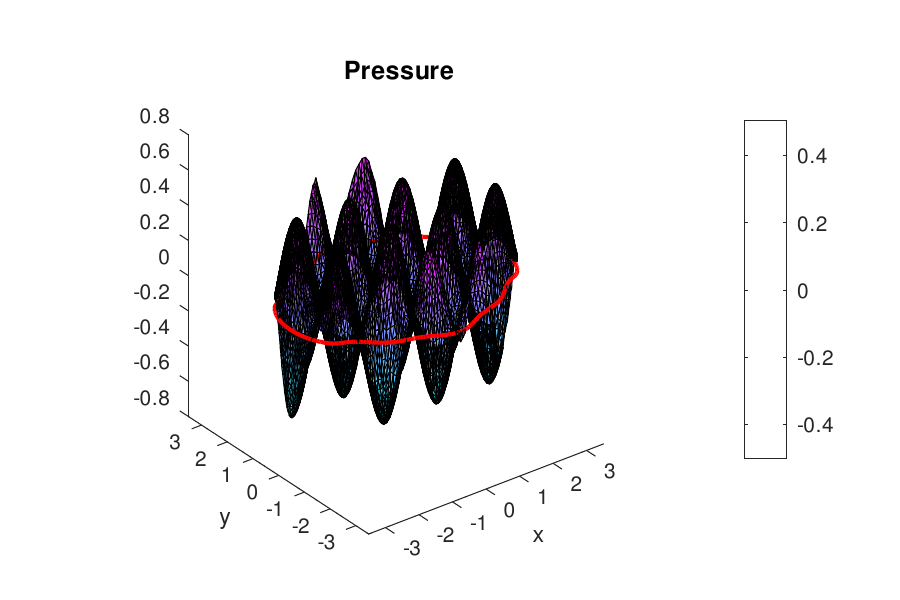} 
\\
\noindent
\hspace*{-30mm}
\includegraphics[scale=0.85]{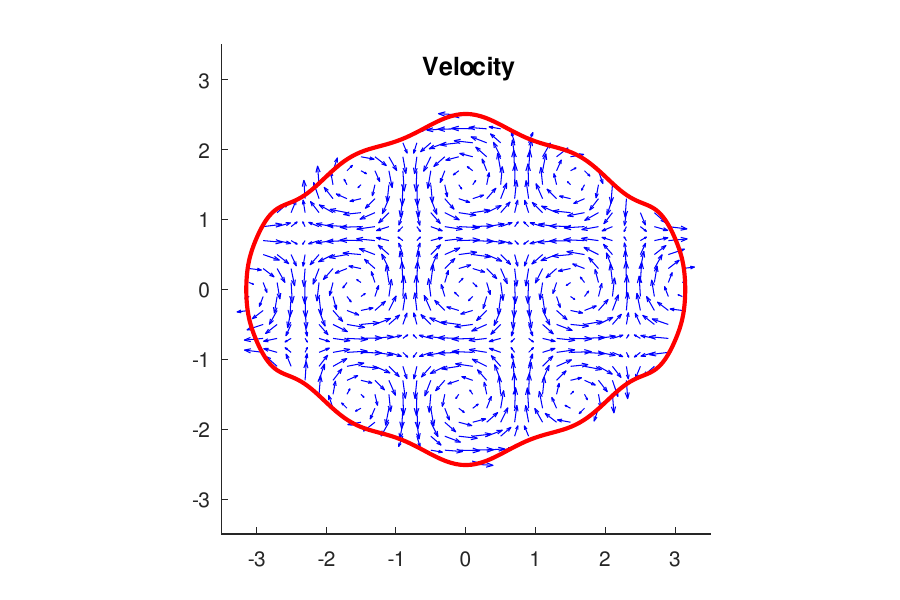} 
\hspace*{-31mm}
\includegraphics[scale=0.85]{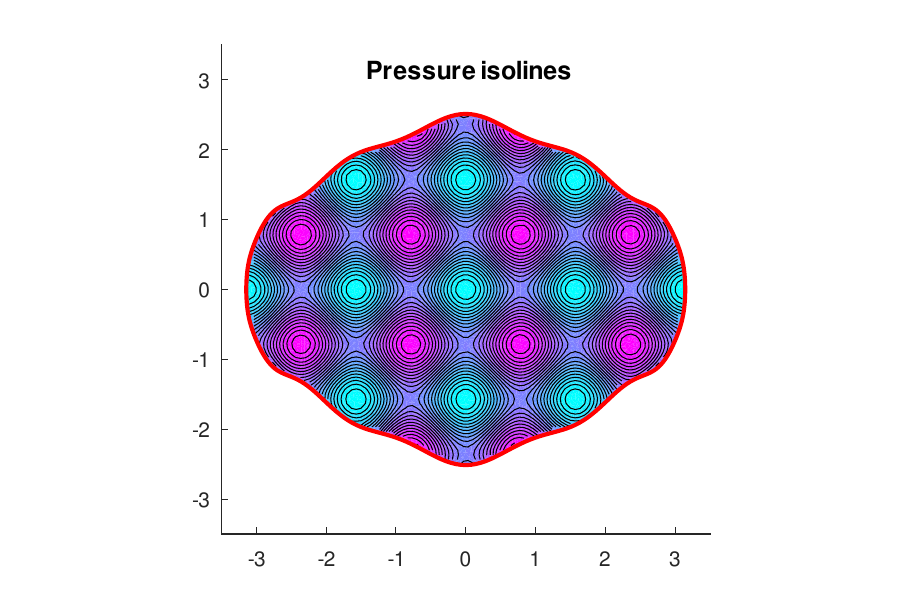} 
\caption{Results for Kovasznay problem with $\Omega$ given by \eqref{e:T(B,pi)}, 
$n=300$ boundary points (so $h=0.0694923$) and $m=2$. 
Here $\|\bu_{n,m}-\bu\|_{L^\infty(\Omega)}/\|\bu\|_{L^\infty(\Omega)}=2.2\cdot 10^{-3}$,
$\|p_{n,m}-p\|_{L^\infty(\Omega)}/\|p\|_{L^\infty(\Omega)}=7.8\cdot 10^{-2}$,
$\|\nabla\cdot\bu_{n,m}\|_{L^\infty(\Omega)}=3.4\cdot 10^{-2}$,
$\|\nabla\cdot\bu_{n,m}\|_{L^2(\Omega)}=1.0\cdot 10^{-2}$.
}
\label{f:k-gen-300,2}
\end{figure}

\subsection{The 2D Bercovier–Engelman test problem}
This example is chosen to demonstrate that the method gives very good results even in the case of 
domains which are not $C^{1,1}$.
In this case we consider the so-called  ``2D Bercovier–Engelman problem", see \cite{angeli+al-2017,boyer+al-2017}, given by \eqref{e:bu-all} with
\begin{subequations}\label{e:2d-b-e}
\begin{align*}
\alpha&=0,\quad
\mu=1,\quad
\Omega=(0,1)\times(0,1)
\\
\ibf(x,y)&=\left( f(x,y) + \left(y-\frac{1}{2}\right),
                 -f(y,x) + \left(x-\frac{1}{2}\right)\right),
                 \\
f(x,y)&=256\left( x^2 (x-1)^2(12y-6) + y(y-1)(2y-1)(12x^2-12x + 2) \right).
\end{align*}
\end{subequations}
The exact solution is given by
\[
 \bu=(u(x,y),-u(y,x),\quad
  u(x,y)=-256x^2 (x-1)^2y(y-1)(2y-1),\quad
  p(x,y)=\left(x-\frac{1}{2}\right)\left(y-\frac{1}{2}\right).
\]
One of the interests of this  classical test case is that the gradient part $\nabla\pi$ of the 
external force $\ibf$ is small compared to its curl part $\bvarpi$, and thus it is interesting to check whether a numerical method will capture the pressure correctly, see 
\cite{boyer+al-2017}. 
In Figure \ref{f:b-50} we plot (from left to right, top to bottom) the mesh,
the pressure $p_n$, velocity $\bu_n$ and the isolines of pressure $p_n$, with $n=50$.
\\
\begin{figure}[!h]
\noindent
\hspace*{0mm}
\includegraphics[scale=0.6]{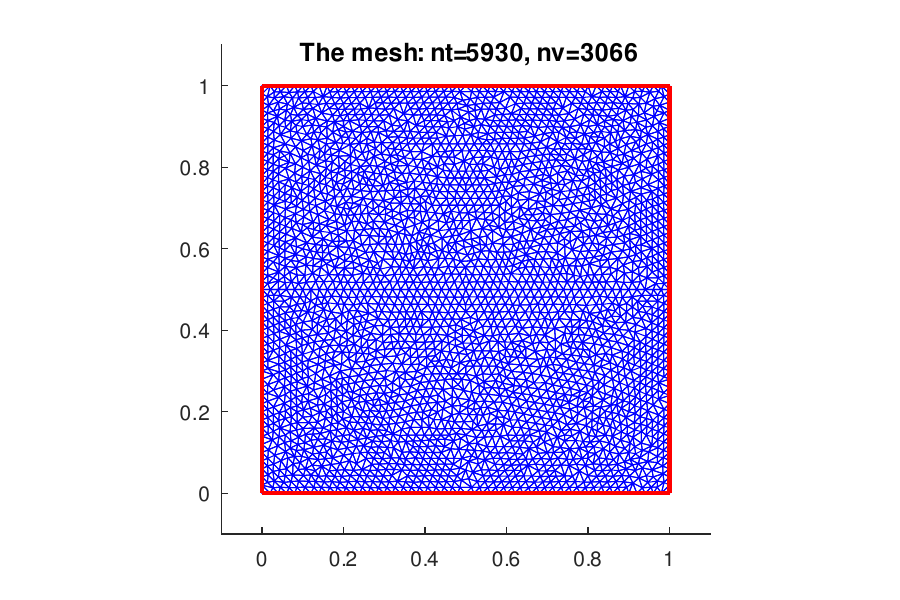} 
\hspace*{-5mm}
\includegraphics[scale=0.6]{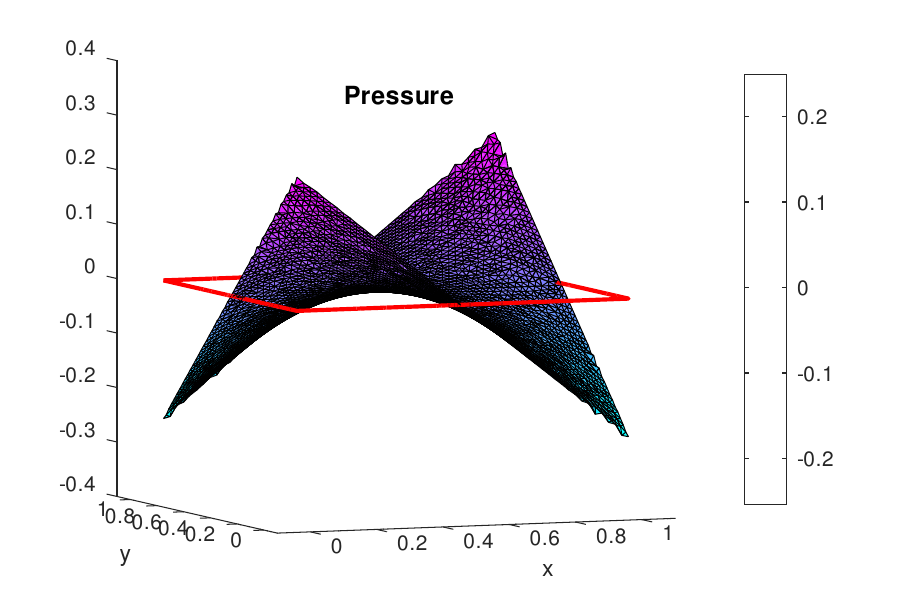} 
\\
\\
\noindent
\hspace*{-15mm}
\includegraphics[scale=0.8]{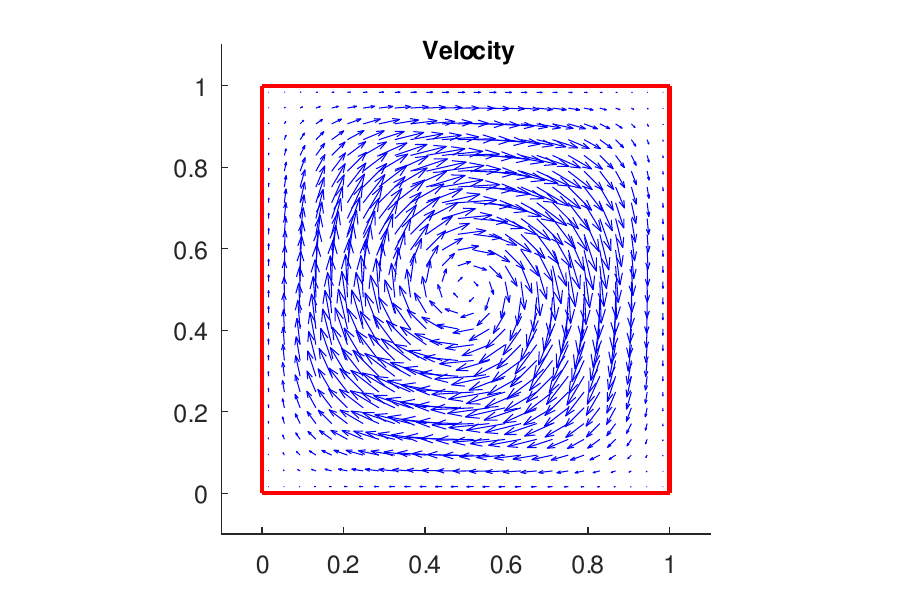} 
\hspace*{-32mm}
\includegraphics[scale=0.8]{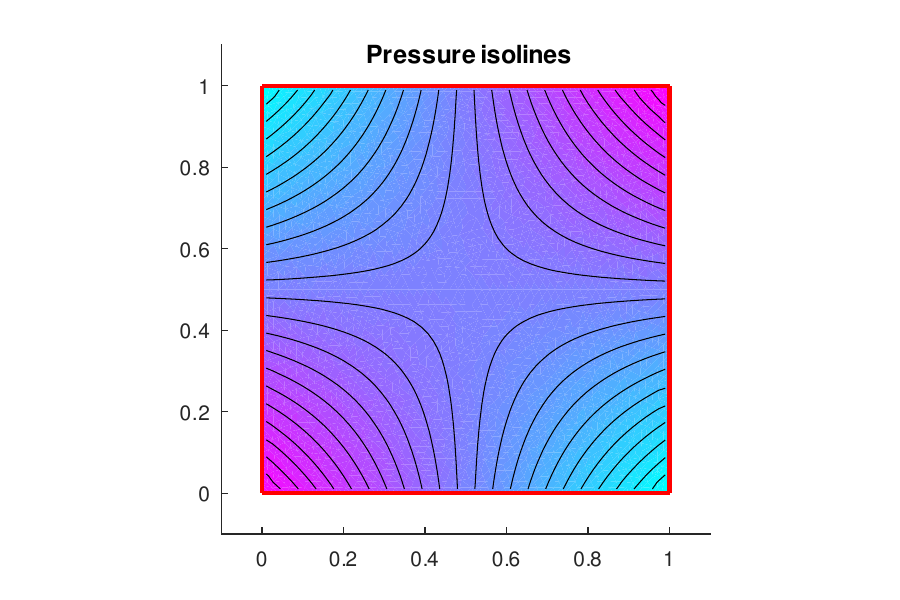} 
\caption{Results for Bercovier-Engleman with $n=50$  (so $h=0.02$). 
Here 
$\|\bu_n-\bu\|_{L^\infty(\Omega)}/\|\bu\|_{L^\infty(\Omega)}=2.7\cdot 10^{-5}$,
$\|p_n-p\|_{L^\infty(\Omega)}/\|p\|_{L^\infty(\Omega)}=5.8\cdot 10^{-2}$,
$\|\nabla\cdot\bu_n\|_{L^\infty(\Omega)}=2.9\cdot 10^{-2}$,
$\|\nabla\cdot\bu_n\|_{L^2(\Omega)}=3.8\cdot 10^{-3}$.
}
\label{f:b-50}
\end{figure}

\subsection{3d Taylor-Green vortex test problem}
This example is chosen to demonstrate that the method works very well with 
domains which are not $C^{1,1}$ in the case $d=3$.
In this case we consider the so-called  ``Taylor-Green vortex problem",
see \cite{angeli+al-2017,boyer+al-2017}, given by \eqref{e:bu-all} with
\begin{subequations}\label{e:3d-t-g}
\begin{align*}
\alpha&=0,\quad
\mu=1,\quad
\Omega=(0,1)\times(0,1)\times(0,1),
\\
\ibf(x,y)&=\left(-36\pi^2\cos(2\pi x_1)\sin(2\pi x_2)\sin(2\pi x_3), 0, 0\right).
\end{align*}
\end{subequations}
The exact solution is given by
\[
\bu=
\left[
\begin{array}{r}
 -2\cos(2\pi x_1)\sin(2\pi x_2)\sin(2\pi x_3)\\
 \sin(2\pi x_1)\cos(2\pi x_2)\sin(2\pi x_3)\\
 \sin(2\pi x_1)\sin(2\pi x_2)\cos(2\pi x_3)
 \end{array}
\right],
\hspace{10mm}
p =-6\pi\sin(2\pi x_1)\sin(2\pi x_2)\sin(2\pi x_3).
\]
The domain $\Omega$ is discretized with regular tetrahedrons with a parameter 
$\ds{h=\frac{1}{n}}$. 
In Figure \ref{f:tg-40} we plot (from left to right, top to bottom) the approximate velocity  
$\bu_n$ in $xz$, $yz$, $xy$ planes and the pressure $p_n$ in different cross-sections,
with $n=40$.
\begin{figure}[!h]
\noindent
\hspace*{-22mm}
\includegraphics[scale=0.8]{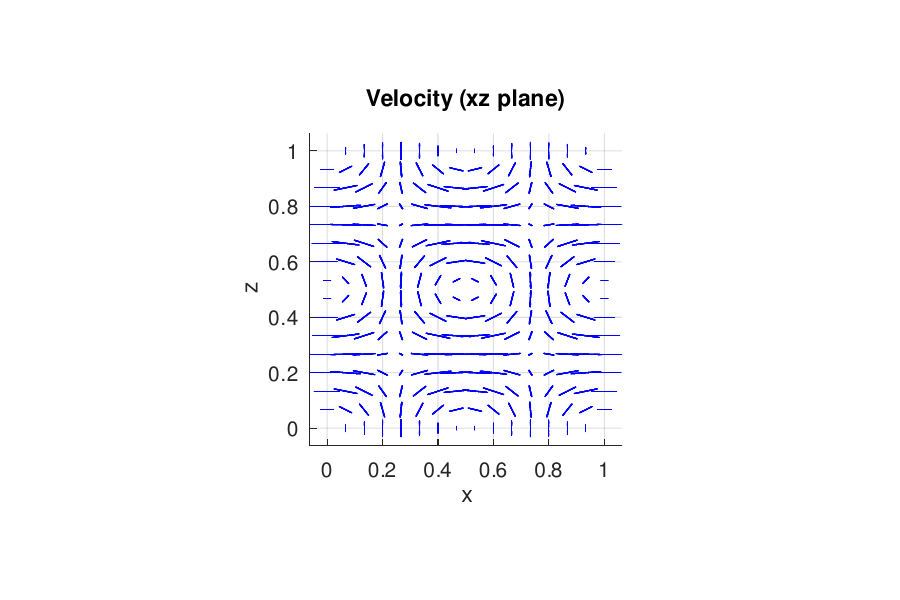} 
\hspace*{-40mm}
\includegraphics[scale=0.8]{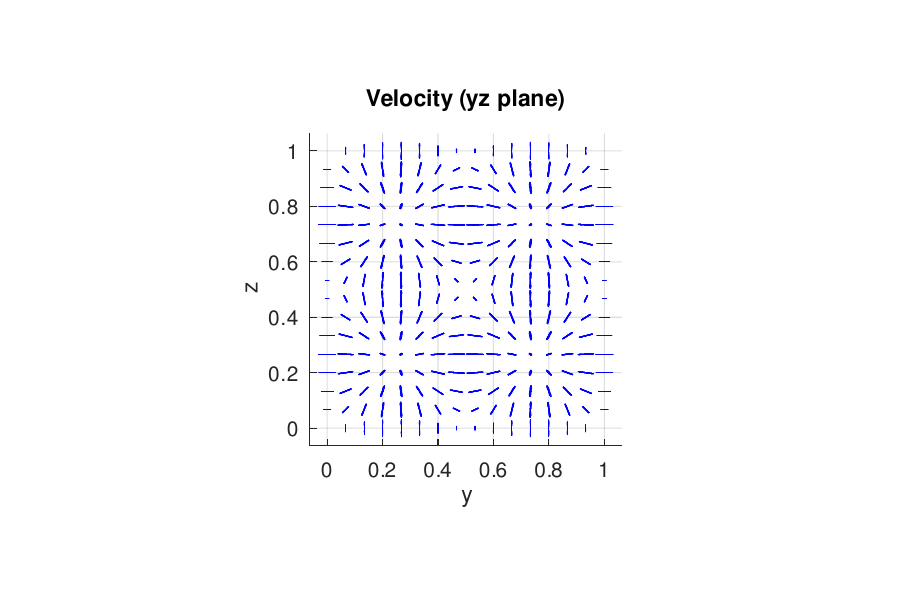} 
\\
\noindent
\hspace*{-22mm}
\includegraphics[scale=0.8]{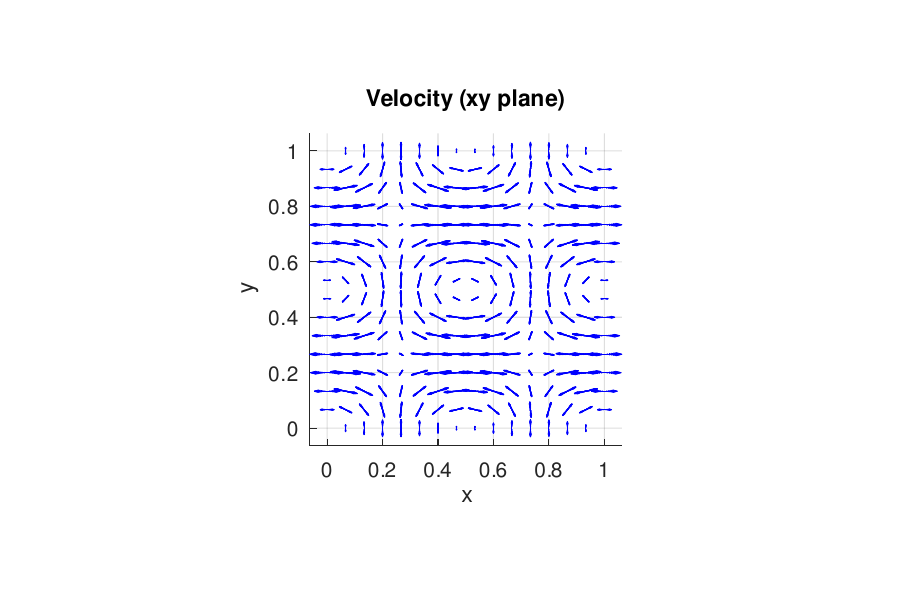} 
\hspace*{-35mm}
\includegraphics[scale=0.8]{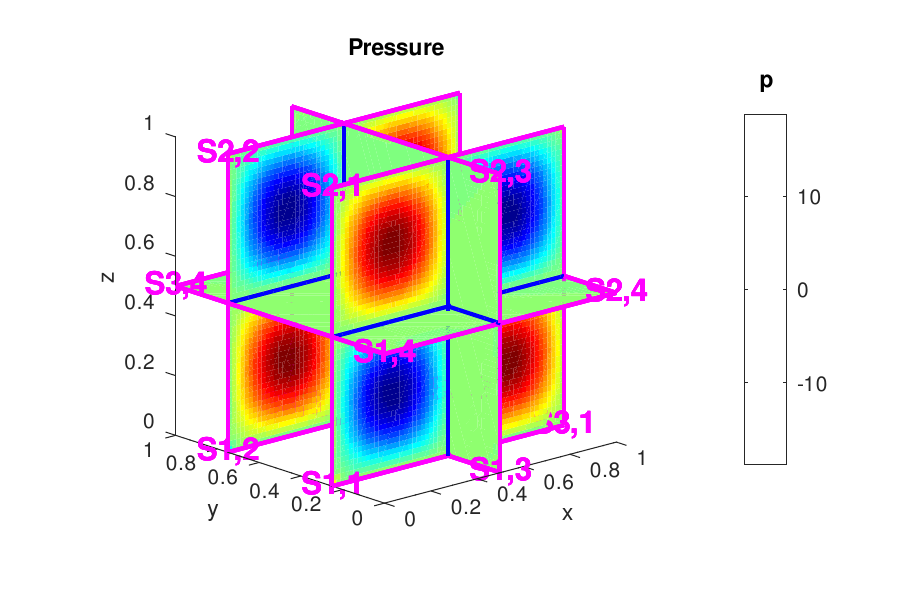} 
\caption{Results for Taylor-Green voxrtex problem with $n=40$  (so $h=0.025$).
Here 
$\|\bu_n-\bu\|_{L^2(\Omega)}/\|\bu\|_{L^2(\Omega)}=4.2\cdot10^{-3}$,
$\|\bu_n-\bu\|_{L^\infty(\Omega)}/\|\bu\|_{L^\infty(\Omega)}=7.2\cdot10^{-3}$,
$\|p_n-p\|_{L^2(\Omega)}/\|p\|_{L^2(\Omega)}=2.0\cdot 10^{-2}$,
$\|p_n-p\|_{L^\infty(\Omega)}/\|p\|_{L^\infty(\Omega)}=5.0\cdot 10^{-1}$,
$\|\nabla\cdot\bu_n\|_{L^2(\Omega)}=6.7\cdot10^{-2}$,
$\|\nabla\cdot\bu_n\|_{L^\infty(\Omega)}=4.5\cdot 10^{-1}$.
}
\label{f:tg-40}
\end{figure}

\section{Conclusions}\label{sec:concl}
We have shown that the solution of the generalized Stokes equations satisfy a particular structure, which leads to a novel method for solving generalized Stokes equations.
We prove that the velocity is the superposition of two divergence-free velocity vector fields.
One of them is generated by the divergence-free  rotational component of the external force term,
and the other is generated by the gradient of a ``solid pressure" harmonic function, chosen to match the normal component of the given boundary velocity.
It turns out that even the pressure is a superposition of the external force pressure and
the solid pressure.

The method is easily implemented numerically. It consists on solving systems 
of elliptic PDEs coupled only by some boundary conditions.
We have presented a number of numerical results for some  two and three 
dimensional benchmark problems with known explicit solutions.
The numerical results are with remarkable precision.

The method applies to generalized Stokes equations with non-zero divergence and  with Neumann and mixed boundary conditions, which we will show in details in an upcoming paper.


\begin{thebibliography}{99}

%
%
%
%
%
%
%
%
%
%

\bibitem{angeli+al-2017} %
P.-E. Angeli, M.-A. Puscas, G. Fauchet, A. Cartalade,
``FVCA8 benchmark for the Stokes and Navier-Stokes equations with the TrioCFD code - 
benchmark session",
Finite Volumes for Complex Applications 8, Jun 2017, Lille, France




\bibitem{uzawa-1958} %
K. J. Arrow, L. Hurwicz, H. Uzawa,
``Studies in Linear and Nonlinear Programming",
Stanford University Press, 1958


%
%
%
%


\bibitem{boyer+al-2017} %
F. Boyer, P. Omnes,
``Benchmark proposal for the FVCA8 conference : 
Finite volume methods for the Stokes and Navier-Stokes equations",
Springer Proceedings in Mathematics \& Statistics, 199,
Springer, pp.59-71, 2017, 
FVCA 2017: Finite Volumes for Complex Applications VIII - 
Methods and Theoretical Aspects, 978-3-319-57396-0 


\bibitem{brezis-2011} %
H. Brezis,
``Functional Analysis, Sobolev Spaces and Partial Differential Equations",
Universitext, Springer, 2011

\bibitem{bristeau+al-1987}
M. O. Bristeau, R. Glowinski, J. P\'eriaux,
``Numerical methods for the Navier-STokes equations. 
Applications to the simulation of compressible and incompressible viscous flows",
Computer Physics Reports 6 (1987) 73-187
North-Holland, Amsterdam


\bibitem{chorin-1968} %
A. J. Chorin,
``Numerical Solution of the Navier-Stokes Equations", 
Math. Comp., 22 (104): 745–762, 1968




\bibitem{christensen-2016} %
O. Christensen,
``An Introduction to Frames and Riesz Bases",
Second Edition,
Birkh\"auser, 2016


%
%
%
%
%

%
%
%
%
%

%
%
%
%

\bibitem{russell-1982-ns+char} %
J. Douglas and T. F. Russell, 
``Numerical method for convection dominated diffusion problems based on combining the
method of characteristics with finite element or finite difference procedures", 
SIAM J. Numer. Anal., 19 (1982)


\bibitem{elman+al-2005} %
H. C. Elman, D. J. Silvester, A. J. Wathen,
``Finite Elements and Fast Iterative Solvers : with Applications in Incompressible Fluid Dynamics: with Applications in Incompressible Fluid Dynamics",
Oxford University Press, 2005


\bibitem{fortin-1983} %
M. Fortin, R. Glowinski, 
``Augmented Lagrangian Methods: Applications to the Numerical Solution of Boundary-Value Problems",
Studies in Mathematics and Its Applications, 
North-Holland Publishing Co., Vol. 15, 1983



\bibitem{gilbarg+trudinger-2001} %
D. Gilbarg, N. S. Trudinger,
``Elliptic Partial Differential Equations of Second Order", 
Volume 224,
Springer Science \& Business Media, Jan 12, 2001


\bibitem{girault+raviart-1986} %
V. Girault, P.-A. Raviart,
``Finite Element Methods for Navier-Stokes Equations. Theory and Algorithms",
Springer-Verlag, Berlin, Heidelberg,
1986


%
%
%
%
%
%
%
%
%
%
%


\bibitem{glowinski-1984} %
R. Glowinski, 
``Numerical Methods for Nonlinear Variational Problems", 
Springer-Verlag, New York, 1984
 

\bibitem{grisvard-1985} %
P. Grisvard,
``Elliptic Problems in Nonsmooth Domains",
Pitman Publishing Inc. 1020 Plain Street, Marshfield, Massachusetts 02050,
1985



\bibitem{guermond+al-2006} %
J. L. Guermond,P. Minev, J. Shen,
``An overview of projection methods for incompressible flows",
Comput. Methods Appl. Mech. Engrg. 195 (2006) 6011–6045





\bibitem{hecht-2012} %
F. Hecht,
``New development in FreeFem++",
Journal of numerical mathematics,
20.3-4, 251-266, 2012


%
%
%
%
%



\bibitem{lions+magenes-1961} %
J.-L. Lions, E. Magenes,
``Problem\`es aux limites non homog\`enes",
Annales de l'Institut Fourier, 11, 137-178, 1961

%
%
%
%
%
%

\bibitem{pironneau-1982-ns+char} %
O. Pironneau, 
``On the transport diffusion algorithm and its applications to the Navier-Stokes equations", Numer. Math., 38, 309-332 (1982)


\bibitem{temam-1969} %
R. Temam, 
``Sur l'approximation de la solution des \'equations de Navier-Stokes par la
m\'ethode des fractionnaires II", 
Arch. Rational Mech. Anal. 33 (1969), 377-385

\bibitem{temam-2001} %
R. Temam,
``Navier–Stokes equations : theory and numerical analysis",
AMS Chelsea Publishing,
American Mathematical Society, Providence, Rhode Island,
2001





%
%
%
%
%

\bibitem{wahl-1992} %
W. von Wahl,
``Estimating $\nabla\bu$ by ${\rm div}\bu$ and ${\rm curl}\bu$",
Mathematical Methods in the Applied Sciences, Vol. 15, 123-143 (1992)

\end{thebibliography}
\end{document}